\documentclass[11pt]{article}

\usepackage{amstext,amsfonts,amsthm,graphicx,amssymb,amscd,epsfig}
\usepackage{amsmath}
\usepackage[ansinew]{inputenc}    %

\newtheorem{definition}{Definition}[section]
\newtheorem{rem}[definition]{Remark}
\newtheorem{prop}[definition]{Proposition}
\newtheorem{lem}[definition]{Lemma}
\newtheorem{coro}[definition]{Corollary}
\newtheorem{teo}[definition]{Theorem}
\newtheorem{ex}[definition]{Example}
\newcommand{\C}{\;\mbox{{\sf I}}\!\!\!C}

\newcommand{\R}{I\!\!R}

\newcommand{\K}{I\!\!K}

\newfont{\bbb}{msbm10 scaled\magstephalf}     
\def\C{\mbox{\bbb C}}
\def\K{\mbox{\bbb K}}

\def\cod{\operatorname{cod}}

\def\R{\mbox{\bbb R}}

\setlength{\textwidth}{13.8cm} \setlength{\textheight}{20cm}

\title{\bf Classifying codimension two multigerms}

\author{R. Oset Sinha,  M. A. S. Ruas and R. Wik Atique}

\date{}

\begin{document}

\maketitle
\begin{abstract}
 We generalise the operations of augmentation and concatenations defined in \cite{robertamond} in order to obtain
multigerms of analytic (or smooth) maps $(\mathbb
K^n,S)\rightarrow(\mathbb K^p,0)$ with $\mathbb K=\mathbb C$ or
$\mathbb R$ from monogerms and some special multigerms. We then
prove that any corank 1 codimension 2 multigerm in Mather's nice
dimensions $(n,p)$ with $n\geq p-1$ can be constructed using
augmentations and these operations.
\end{abstract}

\renewcommand{\thefootnote}{\fnsymbol{footnote}}
\footnote[0]{2010 Mathematics Subject Classification: 58K40
(primary), 32S05, 32S70 (secondary).} \footnote[0]{Keywords: stable
maps, augmentations, concatenations, multigerms.} \footnote[0]{The
first author is partially supported by FAPESP grant no. 2010/01501-5
and DGCYT and FEDER grant no. MTM2009-08933. The second and third
authors are partially supported by FAPESP grant no. 2008/54222-6.
The second author is partially supported by CNPq grant no.
303774/2008-8.}

\section{Introduction}

The classification of singularities of map germs from $\K^n$ to
$\K^p$ has been one of the main areas of research in Singularity
Theory for the last decades. The foundations are the fundamental
works of Whitney, Mather and Thom on classification of stable maps
followed by Arnold's classification of simple singularities of
functions in \cite{Arnold3}. Since then, complete classifications up
to certain codimension for certain pairs $(n,p)$ have been carried
out by many authors (\cite{rieger}, \cite{riegerruas}, \cite{mond},
\cite{Goryunov}, \cite{marartari}, \cite{klotz}, \cite{roberta},
...) and it is still an active field of research.

The bibliography related to the classification of multigerms is less
abundant. The first reference is Mather's classification of stable
multigerms \cite{mather}. A list of multigerms from $\R^2$ to $\R^3$
including codimension 1 singularities is given by Goryunov
(\cite{Goryunov3}, \cite{Goryunov2}). Hobbs and Kirk
(\cite{hobbskirk}) give a thorough classification for this case. The
third author obtains in \cite{roberta} the list of simple multigerms
from $\C^2$ to $\C^3$ and gives a method that can be applied to the
case $\C^n$ to $\C^{n+1}$. Normal forms for multigerms from the
plane to the plane are given by several authors. A good account of
this is \cite{Ohmoto}. More recently, in \cite{mio}, the first
author and Romero Fuster give normal forms of multigerms up to
codimension 2 from $\R^3$ to $\R^3$ using a geometric method based
on the theory of contact between submanifolds developed by Montaldi
in \cite{Mon}. This method is used to give candidates for the
different $\mathcal A$-classes and then normal forms are given,
being able to prove that they have the announced codimension.

The classification of multigerms using the classical Singularity
Theory techniques such as the complete transversal's method can be
hard to deal with. In \cite{roberta} the $\mathcal A$-classification
of multigerms from $\C^n$ to $\C^{n+1}$ is reduced to a much simpler
$\mathcal K$-classification. However, in other dimensions, it is
still a very hard task. Therefore, operations to obtain germs in
certain dimensions from germs with fewer branches in lower
dimensions have been developed. The concepts of augmentations and
monic and binary concatenations appear in \cite{robertamond}, where
Cooper, Mond and Wik Atique show that any codimension 1 multigerm
can be obtained using these operations. Further developments on
these operations are given by Houston (\cite{houston2},
\cite{houston}).

As can be seen from the classification in \cite{mio}, these
operations do not give the complete list of codimension 2
multigerms.

In this paper we generalise these operations in order to obtain a
complete list of multigerms of codimension 2. The first operation
introduced merges two of the earlier ones, it is a simultaneous
augmentation and monic concatenation. Then, a generalised definition
of concatenation is given which includes both the monic and binary
concatenations as particular cases. Two other examples of this
family of operations are studied, namely cuspidal concatenation and
double fold concatenation. We show that for $(n,p)$ in Mather's nice
dimensions and $n\geq p-1$ all the multigerms of corank 1 and
codimension 2 are obtained using these operations starting from
monogerms and some special multigerms. We are therefore giving a
method to obtain new classifications of multigerms for any $(n,p)$
in Mather's nice dimensions, including the complete lists of
singularities up to codimension 2.

The study of multigerms is not only important for classification
purposes but is also necessary for other research lines such as the
study of topological invariants. For instance, to obtain the first
order local invariants of stable maps it is essential to know the
multigerms up to codimension 2 (see for example \cite{Goryunov2},
\cite{Ohmoto}, \cite{yamamoto}, \cite{mio} or \cite{catiana}). Also,
our methods provide a source of new examples to test the Mond
conjecture which states that the codimension of a germ is less than
or equal to its image Milnor number.

The paper is organised as follows: In \S2 we fix our notation and
give some basic results and definitions. In \S3 we define
augmentations, state some known results and give a characterisation
to identify augmented multigerms having only stable branches. In \S4
we define the new operations, give formulae to calculate the
codimension of the resulting multigerms and several examples. In \S5
we prove that any codimension 2 multigerm in Mather's nice
dimensions with $n\geq p-1$ is obtained using these operations.
Finally, in \S6 we give an example of how to obtain all the
codimension 2 multigerms using these operations, namely the
classification of codimension 2 multigerms from $\C^3$ to $\C^3$
obtained in \cite{mio}.

\emph{Acknowledgements:} The authors would like to thank the referee
for a careful reading of the manuscript and many valuable
suggestions.

\section{Notation}

Let $\mathcal O_{n}^p$ be the vector space of map germs with $n$
variables and $p$ components. When $p=1$, $\mathcal O_{n}^1=\mathcal
O_n$ is the local ring of germs of functions in $n$-variables and
$\mathcal M_n$ its maximal ideal. The set $\mathcal O_{n}^p$ is a
free $\mathcal O_n$-module of rank $p$. A multigerm is a germ of an
analytic (complex case) or smooth (real case) map
$f=\{f_1,\ldots,f_r\}:(\K^n,S)\rightarrow (\K^p,0)$ where
$S=\{x_1,\ldots,x_r\}\subset \K^n$, $f_i:(\K^n,x_i)\rightarrow
(\K^p,0)$ and $\K=\C$ or $\R$. Let $\mathcal M_n\mathcal O_{n,S}^p$
be the vector space of such map germs. Let $\theta_{\mathbb K^n,S}$
and $\theta_{\mathbb K^p,0}$ be the $\mathcal O_n$-module of germs
at $S$ of vector fields on $\K^n$ and $\mathcal O_p$-module of germs
at 0 of vector fields on $\K^p$ respectively. We denote them by
$\theta_n$ and $\theta_p$. Let $\theta(f)$ be the $\mathcal
O_n$-module of germs $\xi:(\K^n,S)\rightarrow T\K^p$ such that
$\pi_p\circ \xi=f$ where $\pi_p:T\K^p\rightarrow \K^p$ denotes the
tangent bundle over $\K^p$.

Define $tf:\theta_n\rightarrow \theta(f)$ by $tf(\chi)=df\circ\chi$
and $wf:\theta_p\rightarrow \theta(f)$ by $wf(\eta)=\eta\circ f$.
The $\mathcal A_e$-tangent space of $f$ is defined as $T\mathcal A_e
f=tf(\theta_n)+wf(\theta_p)$. Finally we define the $\mathcal
A_e$-codimension of a germ $f$, denoted by $\mathcal A_e$-cod($f$),
as the $\K$-vector space dimension of
$$NA_e(f)=\frac{\theta(f)}{T\mathcal A_e f}.$$ We refer to Wall's
survey article \cite{wall} for general background on the theory of
singularities.

Following Damon in \cite{damon}, a transverse fibre square is a
diagram

\begin{equation*}
\begin{CD}
X @>{F}>> Y\\ @AA{j}A        @A{i}AA   \\
X_0 @>{f}>> Y_0
\end{CD}
\end{equation*}
in which $i\pitchfork F$, $X_0\simeq X\times_{Y} Y_0$ and $f$ is
right equivalent to the projection $X\times_{Y} Y_0\rightarrow Y_0$.
The map-germ $f$ is called the pull-back of $F$ by $i$ and is
denoted by $i^*(F)$. Any germ $f$ is a pull-back of a stable
$q$-parameter unfolding $F$ by the natural inclusion
$i:(\K^p,0)\rightarrow (\K^{p}\times\K^q,0)$. Damon proved that
$\mathcal A_e$-cod$(f)=\mathcal K_{D(F),e}$-cod$(i)$, where
$$\mathcal K_{D(F),e}-\mbox{cod}(i)=\dim_{\mathbb K} N\mathcal
K_{D(F),e}(i)=\dim_{\mathbb K}
\frac{\theta(i)}{ti(\theta_p)+i^*(Derlog(D(F)))}$$ where $D(F)$ is
the discriminant of $F$ and $Derlog(V)$ represents the $\mathcal
O_p$-module of tangent vector fields to $V$.

\begin{definition}

i) A vector field germ $\eta\in \theta_p$ is called \textit{liftable
over $f$}, if there exists $\xi\in\theta_n$ such that
$df\circ\xi=\eta\circ f$ ($tf(\xi)=wf(\eta)$). The set of vector
field germs liftable over $f$ is denoted by Lift($f$) and is an
$\mathcal{O}_p$-module.

ii) Let $\widetilde{\tau}(f)=ev_0$(Lift($f$)) be the evaluation at
the origin of elements of $Lift(f)$.
\end{definition}

In general $Lift(f)\subseteq Derlog(V)$ when $V$ is the discriminant
of an analytic $f$. We have an equality when $\K=\C$ and $f$ is
complex analytic.

The set $\widetilde{\tau}(f)$ is the tangent space to the well
defined manifold containing 0 along which the map $f$ is trivial
(i.e. the analytic stratum). Notice that if $f$ is stable then
$\widetilde{\tau}(f)=\tau(f)$ in Mather's sense (namely
$\tau(f)=ev_0[wf^{-1}\{f^*\mathcal
M_p\theta(f)+tf(\theta_{n,S})\}]$). See \cite{houston} or
\cite{robertamond} for some basic properties of $\widetilde\tau(f)$.

From here on we will consider only minimal corank germs.

\section{Augmentations of smooth mappings}

\begin{definition}
Let $h:(\mathbb{K}^n,S)\rightarrow (\mathbb K^p,0)$ be a map-germ
with a 1-parameter unfolding $H:(\mathbb K^n\times\mathbb
K,S\times\{0\})\rightarrow (\mathbb K^p\times\mathbb K,0)$ which is
stable as a map-germ, where $H(x,\lambda)=(h_{\lambda}(x),\lambda)$,
such that $h_0=h$. Let $g:(\mathbb K^q,0)\rightarrow (\mathbb K,0)$
be a function-germ. Then, the \textit{augmentation of h by H and g}
is the map $A_{H,g}(h)$ given by $(x,z)\mapsto (h_{g(z)}(x),z)$.
\end{definition}

\begin{teo}(\cite{houston2},\cite{houston})
 $$\mathcal{A}_e-cod(A_{H,g}(h))\geq\mathcal{A}_e-cod(h)\tau(g),$$ where $\tau$ is the Tjurina number and equality is reached when $g$ is quasihomogeneous.
\end{teo}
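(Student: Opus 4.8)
The plan is to use Damon's theory of sections of discriminants, as recalled in \S2, to translate the $\mathcal{A}_e$-codimension into a $\mathcal{K}_{D(F),e}$-codimension, and then to analyse how augmentation by the function $g$ interacts with this section. First I would fix a stable $q'$-parameter unfolding $F$ of $h$, so that $h = i^*(F)$ for the natural inclusion $i:(\K^p,0)\hookrightarrow(\K^{p+q'},0)$, and set $V = D(F)$, the discriminant. The key observation is that the stable unfolding $H$ of $h$ together with the extra parameters of $F$ yields a stable unfolding of the augmentation $A_{H,g}(h)$ as well; concretely, if one writes things carefully, $A_{H,g}(h)$ is a pull-back of $F$ (or of $H$ composed with $F$) by a section $i_g$ which, in the last parameter direction coming from $H$, looks like $z \mapsto g(z)$. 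Thus the problem becomes: compute $\dim_{\K} N\mathcal{K}_{V,e}(i_g)$ in terms of $\dim_{\K} N\mathcal{K}_{V,e}(i)$ and the function $g$.

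The second step is the module computation. The normal space $N\mathcal{K}_{V,e}(i_g)$ is $\theta(i_g)/(t i_g(\theta_{p+q}) + i_g^*(\mathrm{Derlog}(V)))$. Because $g$ enters only in one coordinate and $\mathrm{Derlog}(V)$ contains the Euler-type vector field in the $\lambda$-direction (this is where the hypothesis that $H$ is a \emph{stable} one-parameter unfolding is used — $\lambda$ is a "good" coordinate on the discriminant), one can split the computation: the part of $\mathrm{Derlog}(V)$ not involving $\lambda$ contributes a copy of $N\mathcal{K}_{V_0,e}(i)$ for each monomial in the local algebra $\mathcal{O}_q/(g, \mathrm{Jac}(g))$ — that is, tensored with the Tjurina algebra of $g$ — while the vector fields involving $\lambda$ together with $t i_g$ cut things down exactly by the relations defining the Tjurina number. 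This is essentially the same mechanism by which, in the monogerm (function) case, $\tau(f\circ g$-type constructions$)$ multiply; here I would follow the argument of Houston in \cite{houston2},\cite{houston} for the codimension-one case and the Damon–Mond-style bookkeeping. The inequality $\mathcal{A}_e\text{-cod}(A_{H,g}(h)) \ge \mathcal{A}_e\text{-cod}(h)\,\tau(g)$ comes out because $\mathcal{O}_q/(g,\mathrm{Jac}(g))$ always surjects onto a $\tau(g)$-dimensional space and the cross terms can only add, never subtract.

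The third step handles the equality case. When $g$ is quasihomogeneous, the Tjurina algebra equals the Milnor algebra, $g$ itself lies in its Jacobian ideal, and the Euler vector field of $g$ is available; this kills precisely the terms that produced the inequality (no $\mathrm{Jac}(g)$-versus-$(g)+\mathrm{Jac}(g)$ discrepancy, and the extra liftable vector field from quasihomogeneity trivialises the would-be surplus directions), giving an exact tensor decomposition $NA_e(A_{H,g}(h)) \cong NA_e(h) \otimes_{\K} \mathcal{O}_q/\mathrm{Jac}(g)$ and hence equality. I expect the main obstacle to be step two: setting up the bigraded/filtered module so that the splitting of $\mathrm{Derlog}(V)$ into its $\lambda$-free part and its $\lambda$-involving part is clean, and making sure that the stability of $H$ really does guarantee the needed normal-crossings-type behaviour of $\lambda$ on $V$ so that $\mathrm{Lift}$ (equivalently $\mathrm{Derlog}$ in the analytic case) is large enough for the argument. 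Since this theorem is attributed to \cite{houston2} and \cite{houston}, I would present the above as a sketch and refer to those papers for the detailed module manipulations.
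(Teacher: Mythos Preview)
The paper does not actually prove this theorem: it is stated with a bare citation to \cite{houston2} and \cite{houston} and no proof environment follows. So there is nothing to compare against beyond the observation that the paper treats this as a black box.

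Your sketch is a reasonable outline of the argument in those references: realising the augmentation as a section of a stable unfolding via Damon's $\mathcal{K}_{D(F),e}$ theory, and then analysing how composing the last unfolding coordinate with $g$ tensors the normal space with the Tjurina algebra of $g$ (with the quasihomogeneous Euler field giving the equality case). Your own final sentence already does exactly what the paper does---defer to \cite{houston2} and \cite{houston} for the details---so in that sense your proposal and the paper's treatment coincide. One small caution on your step two: the clean tensor decomposition you describe is really what happens in the equality case; in general Houston's argument produces a filtration whose graded pieces give the lower bound, rather than a direct splitting of $\mathrm{Derlog}(V)$ into $\lambda$-free and $\lambda$-involving parts, so if you were to write out the proof in full you would need to be more careful there.
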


\begin{teo}(\cite{houston})\label{aughous}
Suppose that $f:(\mathbb K^n,S)\rightarrow (\mathbb K^p,0)$ is
non-stable and has a 1-parameter stable unfolding $F$. Then
$$q=\dim_{\mathbb K}\widetilde{\tau}(F)\geq 1\Leftrightarrow f
\text{ is an augmentation}.$$ More precisely, on the right hand
side, $f\sim_{\mathcal{A}}A_{H,g}(h)$ for some  $h:(\mathbb
K^{n-q},S')\rightarrow (\mathbb K^{p-q},0)$, a smooth map-germ with
a 1-parameter stable unfolding $H$, and $g:(\mathbb
K^q,0)\rightarrow (\mathbb K,0)$ a function, $q\geq 1$.
\end{teo}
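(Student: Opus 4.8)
The plan is to establish the equivalence in Theorem~\ref{aughous} in two directions, leaning on Damon's identification of $\mathcal{A}_e$-codimension with $\mathcal{K}_{D(F),e}$-codimension and on the interpretation of $\widetilde{\tau}(F)$ as the tangent space to the analytic trivialisation stratum of $F$. For the implication ``augmentation $\Rightarrow$ $q\geq 1$'': if $f\sim_{\mathcal A}A_{H,g}(h)$ with $h:(\K^{n-q},S')\to(\K^{p-q},0)$ having the $1$-parameter stable unfolding $H$, then a stable unfolding $F$ of $f$ is obtained by unfolding the function $g$ to a stable (hence versal) unfolding $G:(\K^q,0)\to(\K,0)$ and combining with $H$; concretely $F(x,z,w)=(h_{G(z,w)}(x),z,w)$ up to suspension. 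The $w$-parameter directions are trivialising directions for $F$ built from the unfolding of $g$, so they contribute to $\widetilde{\tau}(F)$, forcing $\dim_{\K}\widetilde{\tau}(F)\geq 1$. Here I would use the fact, recorded after Definition~3.1, that $\widetilde{\tau}$ is exactly the tangent space to the stratum along which the map is trivial, together with the product structure of an augmentation.

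For the converse, which is the substantive direction: suppose $F:(\K^n\times\K^q, S\times\{0\})\to(\K^p\times\K^q,0)$ is a $1$-parameter (i.e.\ $1$-dimensional extra source $=$ target) stable unfolding of $f$ with $\dim_{\K}\widetilde{\tau}(F)=q\geq 1$. Since $\widetilde\tau(F)$ is the tangent space to the smooth analytic stratum $\Sigma$ through $0$ along which $F$ is trivial, after an $\mathcal A$-change of coordinates we may assume $\Sigma$ is a $q$-dimensional linear subspace and that $F$ is literally constant (up to right-left equivalence in the remaining variables) along $\Sigma$. Choosing coordinates so that $\Sigma$ is the last $q$ coordinate directions in both source and target, the triviality of $F$ along $\Sigma$ lets us write $F$ as a family, parametrised by those $q$ variables, of copies of a fixed germ; a careful bookkeeping of the unfolding parameter of $F$ (the ``$1$-parameter stable unfolding'' hypothesis) then shows this fixed germ is itself a $1$-parameter stable unfolding $H$ of some $h:(\K^{n-q},S')\to(\K^{p-q},0)$, and the $q$-variable dependence is precisely through a single function $g:(\K^q,0)\to(\K,0)$ plugged into the unfolding parameter of $H$. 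That is exactly the statement $f\sim_{\mathcal A}A_{H,g}(h)$.

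The main obstacle is making the informal phrase ``$F$ is trivial along $\Sigma$'' into a rigid normal form: one must show that the trivialisation can be chosen to depend analytically (smoothly) on the $q$ stratum-parameters simultaneously and to respect the unfolding structure of $F$, so that after straightening $\Sigma$ the dependence of $F$ on those parameters factors through a single function $g$ rather than through $q$ independent ones. This is where Damon's transverse-fibre-square / pull-back formalism does the real work: $F$ is the pull-back of a large stable unfolding, $\widetilde\tau(F)$ controls which inclusion directions are ``tangent to the discriminant'', and the $\mathcal{K}_{D,e}$-triviality along those directions is what globalises the pointwise trivialisation to a genuine product decomposition. I would cite \cite{houston} and \cite{damon} for the precise form of this integration argument and present the coordinate straightening as the key lemma, with the verification that the resulting $h$ has $H$ as a $1$-parameter stable unfolding being a routine consequence of stability of $F$ being preserved under the pull-back.
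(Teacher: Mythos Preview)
The paper does not prove Theorem~\ref{aughous}: it is stated as a cited result from \cite{houston} (and \cite{houston2}) with no accompanying argument, so there is no ``paper's own proof'' to compare your proposal against. Your sketch is a reasonable outline of the argument one finds in Houston's paper, and the paper here simply defers to that reference.

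One small notational slip in your sketch: you write the $1$-parameter stable unfolding as $F:(\K^n\times\K^q,S\times\{0\})\to(\K^p\times\K^q,0)$, but the ``$1$-parameter'' means exactly one extra parameter, i.e.\ $F:(\K^n\times\K,S\times\{0\})\to(\K^p\times\K,0)$. The number $q=\dim_{\K}\widetilde\tau(F)$ is the dimension of the analytic stratum of $F$ inside the $(p+1)$-dimensional target, not the number of unfolding parameters; these are different things, and the point of the theorem is that the $q$ trivialising directions can be split off from $F$ to exhibit $f$ as an augmentation by a $q$-variable function $g$. Apart from that conflation, your strategy (straighten the analytic stratum, use triviality along it to factor through a single unfolding parameter via a function $g$, then identify the remaining piece as a $1$-parameter stable unfolding $H$ of some $h$) is the right shape and matches Houston's approach.
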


A germ that is not an augmentation is called primitive. When all the
branches of a multigerm are stable we give a more geometric
characterisation for augmented germs.

\begin{definition}
Given a multigerm $f=\{f_1,\ldots,f_r\}$ we say that its branches
are totally non-transverse if for every $i=1,\ldots,r$ there exists
$\eta_i\in Lift(f_i)$, $\eta_i(0)\neq 0$, such that
$\eta_i(0)=\eta_j(0)$ for all $j\neq i$.
\end{definition}

From the definition it can be seen that the fact of the branches
being totally non-transverse is equivalent to
$\dim(\widetilde\tau(f_1)\cap\ldots\cap\widetilde\tau(f_r))=\dim(\tau(f_1)\cap\ldots\cap\tau(f_r))\geq
1$.

\begin{prop}\label{totnon}
Let $f=\{f_1,\ldots,f_r\}:(\mathbb K^n,S)\rightarrow (\mathbb
K^p,0)$ be a non-stable multigerm, $|S|>1$, with $f_i$ stable for
all $i=1,\ldots,r$. Suppose that $f$ admits a 1-parameter stable
unfolding $F$. If $f$ is an augmentation then its branches are
totally non-transverse.
\end{prop}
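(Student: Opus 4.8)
The plan is to unwind the definition of augmentation and track what it forces on each branch individually. Since $f$ is an augmentation, by Theorem~\ref{aughous} we may assume $f\sim_{\mathcal A}A_{H,g}(h)$ for some primitive multigerm $h:(\mathbb K^{n-q},S')\rightarrow(\mathbb K^{p-q},0)$ with a $1$-parameter stable unfolding $H(y,\lambda)=(h_\lambda(y),\lambda)$, and some function $g:(\mathbb K^q,0)\rightarrow(\mathbb K,0)$ with $q\geq 1$. Writing $S'=\{y_1,\ldots,y_r\}$ (the augmentation does not change the number of branches, since it is a fibre product construction over the $\lambda$-line), each branch of $f$ is $\mathcal A$-equivalent to $(f_i)(y,z)=(h_{i,g(z)}(y),z)$, where $h_i$ is the $i$-th branch of $h$ and $h_{i,\lambda}$ its unfolding inside $H$. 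The hypothesis that every $f_i$ is stable is what we will exploit together with the structure of $\mathrm{Lift}$.

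The first substantive step is to exhibit, for the full unfolding picture, a canonical liftable vector field. For the augmentation $A_{H,g}(h)$, the coordinate vector field $\partial/\partial\lambda$ on the target of $H$ restricts, via the pull-back diagram defining the augmentation, to a vector field on $(\mathbb K^p,0)$; concretely, because $H$ is an unfolding in the $\lambda$-direction, $\partial/\partial\lambda\in\mathrm{Lift}(H)$ trivially, and pulling back along the inclusion $i$ that presents $A_{H,g}(h)$ as $i^*(H)$ produces, for each branch, a vector field $\eta_i\in\mathrm{Lift}(f_i)$ whose value at the origin is the common vector $i_*(\partial/\partial\lambda)$, independent of $i$. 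Thus $\eta_i(0)=\eta_j(0)$ for all $i,j$, and the only thing left to check is that this common value is nonzero. Here is where the stability of the branches and the minimality of the unfolding enter: if $\eta_i(0)=0$, the lifted field would be tangent to the discriminant of $f_i$ at $0$ in a way that contradicts $H$ being a genuine (nontrivial) $1$-parameter unfolding of the non-stable germ $h$ — equivalently, one shows $\partial/\partial\lambda$ is not tangent to $D(H)$ along the zero section in the relevant sense, so its image under the pull-back evaluates nontrivially. This gives $\eta_i(0)\neq 0$ and hence, by Definition~\ref{totnon}, that the branches are totally non-transverse.

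The main obstacle I anticipate is the very last point: making rigorous that the pulled-back field has nonzero value at the origin, i.e. ruling out the degenerate possibility that the augmentation direction collapses on a given branch. The clean way to handle this is to use the characterisation already recorded in the excerpt: for a stable branch $f_i$ one has $\widetilde\tau(f_i)=\tau(f_i)=ev_0(\mathrm{Lift}(f_i))$ in Mather's sense, and the augmentation construction builds $f_i$ precisely as a pull-back of the stable $H$-branch along an inclusion transverse to $F$; the parameter $\lambda$ then lies in $\tau$ of the stable model, so its image survives evaluation. One should also remark that the statement is invariant under $\mathcal A$-equivalence — $\mathrm{Lift}$ and its evaluation at $0$ transform equivariantly — so replacing $f$ by the normal form $A_{H,g}(h)$ at the outset is harmless. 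Assembling these pieces yields $\eta_i\in\mathrm{Lift}(f_i)$ with $\eta_i(0)\neq 0$ and $\eta_i(0)=\eta_j(0)$ for all $i\neq j$, which is exactly the assertion. \cqde
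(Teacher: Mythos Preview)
Your overall strategy --- locate the augmentation parameter direction inside each $\widetilde\tau(f_i)$ --- is the right one, but the execution has genuine gaps. First, the claim that $\partial/\partial\lambda\in Lift(H)$ ``trivially'' is false: for $H(y,\lambda)=(h_\lambda(y),\lambda)$, lifting the target field $\partial/\partial\Lambda$ would require a source field $\xi$ with $dH(\xi)=(0,\ldots,0,1)$, forcing $\partial_\lambda h_\lambda\in th_\lambda(\theta_{n-q})$, which fails in general. You then argue the opposite --- that $\partial/\partial\lambda$ is \emph{not} tangent to $D(H)$ --- but in the complex case $Lift(H)=Derlog(D(H))$, so these two assertions are inconsistent. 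Second, the map presenting $A_{H,g}(h)$ as a pull-back of $H$ is $(X,z)\mapsto(X,g(z))$; since $f$ is non-stable, $g$ is not a submersion, so this map is not an immersion, there is no well-defined ``$i_*(\partial/\partial\lambda)$'', and vector fields do not pull back along it in the way you describe. Your construction of the $\eta_i$ is therefore not defined.

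The paper's proof avoids these issues by working instead with the explicit $1$-parameter stable unfolding $F(x,z,\lambda)=(h_{g(z)+\lambda}(x),z,\lambda)$ of $f$. An explicit $\mathcal A$-equivalence $F\sim H\times id_q$ is written down whose target diffeomorphism has identity derivative at $0$ (precisely because $dg(0)=0$), giving $\widetilde\tau(F)=\{0\}\times T_0\mathbb K^q\times\{0\}$. One then picks $\eta\in Lift(F)$ with $\eta(0)\neq 0$ necessarily lying in the $Z$-block, so $\eta_{p+1}(0)=0$. The substantive step your proposal does not supply is the restriction of the lifting equation $dF_i(\xi^i)=\eta\circ F_i$ to $\lambda=0$: the spurious term $\partial_\lambda(h_{g(z)+\lambda}(x),z)\vert_{\lambda=0}\cdot\xi^i_{n+1}(x,z,0)$ must be absorbed. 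Here one uses $\xi^i_{n+1}(0)=\eta_{p+1}(0)=0$ together with stability of $f_i$ to place this term in $f_i^*\mathcal M_p\theta(f_i)\subset tf_i(\theta_n)+wf_i(\mathcal M_p\theta_p)$, producing a correction $\gamma^i$ with $\gamma^i(0)=0$ and hence $\overline\eta-\gamma^i\in Lift(f_i)$ with common nonzero value $\overline\eta(0)$ for all $i$. That absorption argument is exactly the content your sketch gestures at (``the parameter $\lambda$ then lies in $\tau$ of the stable model'') but does not carry out.
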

\begin{proof}
As in Theorem 3.3, one can take a primitive $h$ such that
$f\sim_{\mathcal{A}}A_{H,g}(h)$. Since $f$ is non-stable $g$ is not
a submersion.

Following the proof of Theorem 4.6 in \cite{houston},
$f(x,z)=(h_{g(z)}(x),z)$, with $x\in\mathbb K^{n-q}$ and
$z\in\mathbb K^{q}$, has the 1-parameter stable unfolding
$F(x,z,\lambda)=(h_{g(z)+\lambda}(x),z,\lambda)=(X,Z,\Lambda)$. Also
$F\sim_{\mathcal{A}}H\times Id_q$, that is, there exist germs of
diffeomorphisms $\psi$ and $\phi$ such that
$F\circ\phi=\psi\circ(H\times Id_q)$, namely,
$\psi(X,Z,\Lambda)=(X,\Lambda,Z+g(\Lambda))$ and
$\phi(x,z,\lambda)=(x,\lambda,z-g(\lambda))$. Since $g$ is not a
submersion it follows that $d\psi_0$ is a permutation of the
identity. Using the basic properties of $\widetilde\tau$ we have
that $$\widetilde{\tau}(F)=d\psi_0(\widetilde{\tau}(H\times
Id_q))=d\psi_0(\widetilde{\tau}(H)\times T_0\mathbb
K^q)=d\psi_0(\{0\}\times T_0\mathbb K^q)=\{0\}\times T_0\mathbb
K^q\times\{0\}$$ (and therefore $\dim_{\mathbb
K}\widetilde{\tau}(F)=q\geq 1$).

Since $\dim_{\mathbb K}\widetilde{\tau}(F)\geq 1$, there exists
$\eta\in Lift(F)$ such that $\eta(0)\neq 0$. Clearly $\eta\in
Lift(F_i)$ for each branch $F_i$ of $F$. There exists $k$,
$p-q+1\leq k\leq p$, such that the $k$-th component of $\eta$ is non
zero, that is, $\eta_k(0)\neq 0$. Let $\xi^i$ be such that
$dF_i(\xi^i(x,z,\lambda))=\eta(F_i(x,z,\lambda))$, $i=1,\ldots,r$.
Define $\overline\xi^i(x,z)=(\xi^i_1(x,z,0),\ldots,\xi^i_n(x,z,0))$
and $\overline\eta(X,Z)=(\eta_1(X,Z,0),\ldots,\eta_p(X,Z,0))$. By
evaluating this system of equations in $\lambda=0$, from the first
$p$ equations we get
$$df_i(\overline\xi^i(x,z))+\partial_{\lambda}((h_{g(z)+\lambda}(x),z))|_{\lambda=0}\,\xi^i_{n+1}(x,z,0)=\overline\eta(f_i(x,z))$$
where $$\xi^i_{j+(n-p)}(x,z,0)=\eta_j(f_i(x,z),0),\,\,\, p-q+1\leq
j\leq p$$ and from the last equation we get
$$\xi^i_{n+1}(x,z,0)=\eta_{p+1}(f_i(x,z),0).$$ Since $f_i$ is
stable, there exist vector fields $v^i=(v^i_1,\ldots,v^i_n)$ and
$\gamma^i=(\gamma^i_1,\ldots,\gamma^i_p)$ such that
$$\partial_{\lambda}((h_{g(z)+\lambda}(x),z))|_{\lambda=0}\,\xi^i_{n+1}(x,z,0)=df_i(v^i)+\gamma^i(f_i).$$
Since $\xi^i_{n+1}(0)=\eta_{p+1}(0)=0$,
$\partial_{\lambda}((h_{g(z)+\lambda}(x),z))|_{\lambda=0}\,\xi^i_{n+1}(x,z,0)\in
f_i^*\mathcal M_p\theta(f_i)=f_i^*\mathcal
M_p(tf_i(\theta_n)+wf_i(\theta_p))\subset
tf_i(\theta_n)+wf_i(\mathcal M_p\theta_p)$, so we can choose
$\gamma^i(0)=0$. The system can now be written as
$df_i(\overline\xi^i+v^i)+\gamma^i(f_i)=\overline\eta(f_i)$.
Therefore $\overline\eta(X,Z)-\gamma^i(X,Z)\in Lift(f_i)$ and
$\overline\eta(0)-\gamma^i(0)=(\eta_1(0),\ldots,\eta_p(0))\neq 0$
for all $i=1,\ldots,r$.
\end{proof}

The proposition is not true if one of the branches is not stable,
since the analytic stratum of a non stable branch is $\{0\}$.

A finite set $E_1,\ldots,E_r$ of vector subspaces of a
finite-dimensional vector space $F$ has almost regular intersection
of order $k$ (with respect to $F$) if $$\cod(E_1\cap\ldots\cap
E_r)=\cod E_1+\ldots+\cod E_r-k,$$ where $\cod$ represents the
codimension. When $k=0$ we say regular intersection and when $k=1$
we say almost regular intersection. Mather characterised in
\cite{mather} stable multigerms as those where every branch is
stable and the analytic strata have regular intersection. Elementary
algebra proves the following

\begin{lem}
$E_1,\ldots,E_r$ have almost regular intersection of order $k$ if
and only if the cokernel of the natural mapping $$F\rightarrow
(F/E_1)\oplus\ldots\oplus(F/E_r)$$ has dimension $k$.
\end{lem}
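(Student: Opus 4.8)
The plan is to translate the almost-regularity condition for the subspaces $E_1,\dots,E_r$ into a statement about the natural linear map $\Phi:F\rightarrow\bigoplus_{i=1}^r F/E_i$ sending $v\mapsto(v+E_1,\dots,v+E_r)$, and then to compare dimensions. First I would observe that the kernel of $\Phi$ is exactly $E_1\cap\dots\cap E_r$, since $v\in\ker\Phi$ iff $v\in E_i$ for every $i$. Hence by the rank--nullity theorem, $\dim(\operatorname{Im}\Phi)=\dim F-\dim(E_1\cap\dots\cap E_r)=\cod(E_1\cap\dots\cap E_r)$, where the codimension is taken with respect to $F$.

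Next I would compute the dimension of the target: $\dim\bigl(\bigoplus_{i=1}^r F/E_i\bigr)=\sum_{i=1}^r\dim(F/E_i)=\sum_{i=1}^r\cod E_i$. Since $\operatorname{coker}\Phi=\bigl(\bigoplus_i F/E_i\bigr)/\operatorname{Im}\Phi$, we get
$$\dim(\operatorname{coker}\Phi)=\sum_{i=1}^r\cod E_i-\dim(\operatorname{Im}\Phi)=\sum_{i=1}^r\cod E_i-\cod(E_1\cap\dots\cap E_r).$$
Therefore $\dim(\operatorname{coker}\Phi)=k$ if and only if $\cod(E_1\cap\dots\cap E_r)=\cod E_1+\dots+\cod E_r-k$, which is precisely the definition of having almost regular intersection of order $k$. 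This establishes the equivalence in both directions simultaneously.

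There is no real obstacle here: the whole argument is an application of rank--nullity plus additivity of dimension over a finite direct sum, and all the spaces involved are finite-dimensional so every dimension appearing is finite and the subtractions are legitimate. The only point worth stating carefully is that $\Phi$ is well defined and linear (immediate from the definition of quotient spaces) and that $\ker\Phi$ really is the full intersection rather than something smaller — this is where one uses that membership in the kernel is tested componentwise. Everything else is bookkeeping, which is why the lemma is flagged as ``elementary algebra'' in the text.
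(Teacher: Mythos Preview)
Your proof is correct and is exactly the elementary rank--nullity computation the paper has in mind; indeed the paper does not spell out a proof at all, merely stating that ``elementary algebra proves the following,'' and your argument is the natural way to fill that in.
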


\begin{prop}\label{almreg}
Let $f=\{f_1,\ldots,f_r\}:(\mathbb K^n,S)\rightarrow (\mathbb
K^p,0)$ be a non-stable multigerm, $|S|>1$, with $f_i$ stable for
all $i=1,\ldots,r$. If $f$ admits a 1-parameter stable unfolding,
then the $\widetilde\tau(f_i)$ have almost regular intersection.
Moreover, if $f$ admits a decomposition $f=\{f^1,f^2\}$ with
$f^1,f^2$ stable germs, then the converse is also true.
\end{prop}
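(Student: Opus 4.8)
The plan is to deduce both implications from Mather's characterisation of stable multigerms (every branch stable and analytic strata in regular intersection), applied once in the target $(\mathbb{K}^p,0)$ and once in $(\mathbb{K}^{p+1},0)$, together with the previous lemma and the standard behaviour of $\widetilde{\tau}$ under products and under trivial unfoldings. Since every $f_i$ is stable we have $\widetilde{\tau}(f_i)=\tau(f_i)$, and since $f$ itself is not stable, Mather's characterisation already forces the $\widetilde{\tau}(f_i)$ \emph{not} to be in regular intersection, i.e. to have almost regular intersection of some order $k\ge 1$; so the first assertion is equivalent to the bound $k\le 1$.

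For the first implication, fix a $1$-parameter stable unfolding $F=\{F_1,\dots,F_r\}\colon(\mathbb{K}^{n+1},S\times\{0\})\to(\mathbb{K}^{p+1},0)$, $F(x,\lambda)=(\widehat{f}_\lambda(x),\lambda)$. Each branch $F_i$ is a stable germ and a $1$-parameter unfolding of the stable germ $f_i$; since $f_i$ is stable this unfolding is trivial, $F_i\sim_{\mathcal A} f_i\times\mathrm{id}_{\mathbb{K}}$ through isomorphisms of unfoldings. Using this and the fact that $\widetilde{\tau}$ transforms by the target diffeomorphism under $\mathcal A$-equivalence, I would extract the two identities $\widetilde{\tau}(f_i)=i_0^{-1}(\widetilde{\tau}(F_i))$ and $\cod\widetilde{\tau}(F_i)=\cod\widetilde{\tau}(f_i)$, where $i_0\colon T_0\mathbb{K}^p\hookrightarrow T_0\mathbb{K}^{p+1}=T_0\mathbb{K}^p\times T_0\mathbb{K}$ is the natural inclusion and codimensions are taken in the respective ambient spaces. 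Since $F$ is stable, Mather's characterisation and the previous lemma give that the natural map $\Phi\colon T_0\mathbb{K}^{p+1}\to\bigoplus_i T_0\mathbb{K}^{p+1}/\widetilde{\tau}(F_i)$ is surjective. The first identity turns the analogous map $\phi$ for the $\widetilde{\tau}(f_i)$ into the restriction of $\Phi$ to $i_0(T_0\mathbb{K}^p)$, after an identification of the targets which, by the second identity and a dimension count, is an isomorphism; as $i_0(T_0\mathbb{K}^p)$ has codimension $1$ and $\Phi$ is onto, $\operatorname{coker}\phi$ has dimension at most $1$, so $k\le 1$ by the lemma, and therefore $k=1$.

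For the converse, assume $f=\{f^1,f^2\}$ with $f^1,f^2$ stable, and let $I_1\sqcup I_2=\{1,\dots,r\}$ be the induced partition of branches; put $E_1=\bigcap_{i\in I_1}\widetilde{\tau}(f_i)$ and $E_2=\bigcap_{j\in I_2}\widetilde{\tau}(f_j)$, so $E_1\cap E_2=\bigcap_i\widetilde{\tau}(f_i)$. Stability of $f^1$ and $f^2$ (Mather) gives $\cod E_1=\sum_{i\in I_1}\cod\widetilde{\tau}(f_i)$ and $\cod E_2=\sum_{j\in I_2}\cod\widetilde{\tau}(f_j)$, and together with the hypothesis $\cod(E_1\cap E_2)=\sum_i\cod\widetilde{\tau}(f_i)-1$ this forces $\cod(E_1+E_2)=\cod E_1+\cod E_2-\cod(E_1\cap E_2)=1$, so I may choose $v\in T_0\mathbb{K}^p\setminus(E_1+E_2)$. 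I would then define $F=\{F_k\}$ by $F_i(x,\lambda)=(f_i(x),\lambda)$ for $i\in I_1$ and $F_j(y,\lambda)=(f_j(y)+\lambda v,\lambda)$ for $j\in I_2$: this is a $1$-parameter unfolding of $f$, and each $F_k$ is $\mathcal A$-equivalent to $f_k\times\mathrm{id}_{\mathbb{K}}$ (for $j\in I_2$ via the target diffeomorphism $(Y,\Lambda)\mapsto(Y-\Lambda v,\Lambda)$), hence stable. Computing $\widetilde{\tau}(F_i)=\widetilde{\tau}(f_i)\times T_0\mathbb{K}$ for $i\in I_1$ and $\widetilde{\tau}(F_j)=\{(w+sv,s)\colon w\in\widetilde{\tau}(f_j),\ s\in T_0\mathbb{K}\}$ for $j\in I_2$, the choice $v\notin E_1+E_2$ yields $\bigcap_k\widetilde{\tau}(F_k)=(E_1\cap E_2)\times\{0\}$, and substituting $\cod(E_1\cap E_2)=\sum_i\cod\widetilde{\tau}(f_i)-1$ one checks that $\cod(\bigcap_k\widetilde{\tau}(F_k))=\sum_k\cod\widetilde{\tau}(F_k)$ in $T_0\mathbb{K}^{p+1}$. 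Thus the $\widetilde{\tau}(F_k)$ are in regular intersection and every $F_k$ is stable, so $F$ is stable by Mather and $f$ admits a $1$-parameter stable unfolding.

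The step I expect to be the main obstacle is the transfer in the first implication, namely justifying $\widetilde{\tau}(f_i)=i_0^{-1}(\widetilde{\tau}(F_i))$ and $\cod\widetilde{\tau}(F_i)=\cod\widetilde{\tau}(f_i)$. The point is that the trivialising equivalence $F_i\sim_{\mathcal A} f_i\times\mathrm{id}_{\mathbb{K}}$ can be taken to respect the parameter coordinate, so its target part preserves the last coordinate and, restricted to $\lambda=0$, is a self-$\mathcal A$-equivalence of $f_i$, hence fixes $\widetilde{\tau}(f_i)$; this must be set up carefully using the basic properties of $\widetilde{\tau}$ collected in \cite{houston} and \cite{robertamond}. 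In the real-analytic case one works throughout with $\mathrm{Lift}$ and $\widetilde{\tau}$ rather than with $Derlog$ of the discriminant, so no extra care is needed there.
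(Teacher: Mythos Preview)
Your proof is correct and follows essentially the same approach as the paper: the same commutative square of quotient maps (with $F_i\sim_{\mathcal A} f_i\times\mathrm{id}$ giving the isomorphism on the right) for the forward implication, and the same construction of $F$ by translating $f^2$ in a direction $v$ complementary to $\widetilde\tau(f^1)+\widetilde\tau(f^2)$ for the converse. Your careful justification of $\widetilde\tau(f_i)=i_0^{-1}(\widetilde\tau(F_i))$ via an unfolding-preserving trivialisation fills in a detail the paper leaves implicit, and your uniform translation of all branches in $I_2$ (rather than the paper's deformation of a single branch of $f^2$ in the multigerm case) is a harmless variation.
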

\begin{proof}
A 1-parameter stable unfolding $F$ of $f$ restricts to 1-parameter
stable unfoldings $F_i$ of $f_i$. Since $f_i$ is stable for all
$i=1,\ldots,r$, $F_i$ is a prism on $f_i$, that is
$F_i\sim_{\mathcal A} f_i\times id$, therefore
$$T_0\mathbb K^p/\widetilde\tau(f_i)\cong T_0(\mathbb
K^p\times\mathbb K)/\widetilde\tau(F_i) \,\,\,\text{for all}\,\,\,
i=1,\ldots,r.$$

We have the following commutative diagram

\begin{equation*}
\begin{CD}
T_0\mathbb K^p @>>> \frac{T_0\mathbb K^p}{\widetilde\tau(f_1)}\oplus\dots\oplus\frac{T_0\mathbb K^p}{\widetilde\tau(f_r)}\\ @VVV        @VVV   \\
T_0(\mathbb K^p\times\mathbb K) @>>> \frac{T_0(\mathbb
K^p\times\mathbb
K)}{\widetilde\tau(F_1)}\oplus\ldots\oplus\frac{T_0(\mathbb
K^p\times\mathbb K)}{\widetilde\tau(F_r)}
\end{CD}
\end{equation*}

The right hand map is bijective and the bottom map is surjective
since $F$ is stable (and therefore the $\widetilde\tau(F_i)$ are
transversal). So the top map has cokernel of dimension 1. If it were
of dimension 0, it would be surjective and the $\widetilde\tau(f_i)$
would be transverse, which means that $f$ would be stable and this
is a contradiction.

Conversely, if they have almost regular intersection and
$f=\{f^1,f^2\}$ with $f^1,f^2$ stable then there exists a direction,
which we call $v$ such that $T_0\mathbb
K^p=(\widetilde\tau(f^1)+\widetilde\tau(f^2))\oplus\mathbb K\{v\}$.
If $f_2$ is a monogerm, then $\{F^1,F^2\}$ with $F^1=f^1\times id_t$
and $F^2=(f^2+tv,t)$ is a 1-parameter stable unfolding of $f$, since
the $\widetilde\tau(F^i)$ have regular intersection. If $f_2$ is a
multigerm, we deform any one of its branches in the way above and
the result follows too.
\end{proof}

\begin{coro}\label{aug}
Let $f=\{f_1,\ldots,f_r\}:(\mathbb K^n,S)\rightarrow (\mathbb
K^p,0)$ be a non-stable multigerm, $|S|>1$, with $f_i$ stable for
all $i=1,\ldots,r$. Suppose that $f$ admits a 1-parameter stable
unfolding $F$. Then, $$\sum_{i=1}^r \cod(\widetilde\tau(f_i))\leq
p\Leftrightarrow f \text{ is an augmentation}.$$
\end{coro}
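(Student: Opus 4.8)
The plan is to combine Theorem~\ref{aughous} (Houston's characterisation of augmentations in terms of $\dim_{\mathbb K}\widetilde\tau(F)$) with the codimension bookkeeping set up in Proposition~\ref{almreg} and the elementary Lemma on almost regular intersection. The bridge is the observation, already exploited in the proof of Proposition~\ref{almreg}, that since each $f_i$ is stable its $1$-parameter stable unfolding $F_i$ is a prism, so $T_0\mathbb K^p/\widetilde\tau(f_i)\cong T_0(\mathbb K^p\times\mathbb K)/\widetilde\tau(F_i)$, and these quotients on the unfolded side assemble into a surjection from $T_0(\mathbb K^p\times\mathbb K)$ because $F$ is stable.

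First I would compute $\dim_{\mathbb K}\widetilde\tau(F)$ in terms of the $\cod(\widetilde\tau(f_i))$. Set $P=T_0\mathbb K^p$, $\delta_i=\cod(\widetilde\tau(f_i))=\dim P/\widetilde\tau(f_i)$, and consider the natural map $\alpha\colon P\to\bigoplus_{i=1}^r P/\widetilde\tau(f_i)$. By the Lemma, $\operatorname{coker}\alpha$ has dimension equal to the order $k$ of almost regularity of the family $\{\widetilde\tau(f_i)\}$, which by Proposition~\ref{almreg} is exactly $1$ (the family has \emph{almost} regular intersection, order $k=1$, because $F$ is a $1$-parameter stable unfolding). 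Hence $\dim\ker\alpha = p - \sum_i\delta_i + 1$, and $\ker\alpha=\bigcap_i\widetilde\tau(f_i)$. Now I claim $\dim_{\mathbb K}\widetilde\tau(F)=\dim_{\mathbb K}\bigl(\bigcap_i\widetilde\tau(f_i)\bigr)$. Indeed, one inclusion is immediate: any $\eta\in\mathrm{Lift}(F)$ restricts to an element of $\mathrm{Lift}(F_i)$ for each branch, hence $ev_0$ of it lies in $\widetilde\tau(F_i)$, and projecting along the prism direction lands it in each $\widetilde\tau(f_i)$; for the reverse inclusion one uses that $F$ is stable together with the prism structure $F_i\sim_{\mathcal A}f_i\times id$ to promote a common $v\in\bigcap_i\widetilde\tau(f_i)$ to a liftable vector field over the whole $F$ (this is essentially the ``conversely'' half of the argument in Proposition~\ref{almreg}, adapted to produce the liftable field rather than just the unfolding). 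The main obstacle is precisely this reverse inclusion: showing that a vector in $\bigcap_i\widetilde\tau(f_i)$ (or in $\widetilde\tau(F)$ after accounting for the extra parameter direction) is actually realised by a single liftable vector field over $F$, not just over each branch separately. One has to track the $(p+1)$-st component and the way the prism coordinate changes $\psi,\phi$ (as in the proof of Proposition~\ref{totnon}) interact, and keep the field liftable across all branches simultaneously; stability of $F$ is what makes this possible.

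Granting the claim, the corollary is immediate: $f$ is an augmentation $\Leftrightarrow$ $\dim_{\mathbb K}\widetilde\tau(F)\geq 1$ (Theorem~\ref{aughous}) $\Leftrightarrow$ $p-\sum_i\delta_i+1\geq 1$ $\Leftrightarrow$ $\sum_{i=1}^r\cod(\widetilde\tau(f_i))\leq p$. I would also remark that $\dim_{\mathbb K}\widetilde\tau(F)$ is independent of the choice of $1$-parameter stable unfolding $F$ (any two such are related by an $\mathcal A$-equivalence that identifies their $\widetilde\tau$'s, by the basic properties of $\widetilde\tau$ cited earlier), so the criterion is well posed. Finally, since Proposition~\ref{totnon} already records that an augmented multigerm with stable branches has totally non-transverse branches, one may optionally phrase the equivalence also through $\dim\bigl(\bigcap_i\widetilde\tau(f_i)\bigr)\geq 1$, which is literally the same inequality $\sum_i\cod(\widetilde\tau(f_i))\leq p$ once the almost-regularity order $1$ from Proposition~\ref{almreg} is substituted in.
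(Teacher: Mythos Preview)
Your strategy is exactly the paper's: combine Proposition~\ref{almreg} (almost regular intersection of the $\widetilde\tau(f_i)$) with Theorem~\ref{aughous} via the identity $\dim\widetilde\tau(F)=\dim\bigl(\bigcap_i\widetilde\tau(f_i)\bigr)$. The only place you make life harder than necessary is the ``reverse inclusion'' you flag as the main obstacle. You try to compare $\widetilde\tau(F)\subset T_0(\mathbb K^p\times\mathbb K)$ directly with $\bigcap_i\widetilde\tau(f_i)\subset T_0\mathbb K^p$ by projecting along the prism direction, and then worry about promoting a vector in $\bigcap_i\widetilde\tau(f_i)$ to a liftable field over all of $F$.

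The paper sidesteps this entirely by staying in $T_0(\mathbb K^p\times\mathbb K)$: for a \emph{stable} multigerm one has $\widetilde\tau(F)=\bigcap_i\widetilde\tau(F_i)$ and the $\widetilde\tau(F_i)$ meet with \emph{regular} intersection (this is Mather's characterisation of stability, already invoked in Proposition~\ref{almreg}). Since each $F_i$ is a prism on $f_i$, $\cod\widetilde\tau(F_i)=\cod\widetilde\tau(f_i)$, so
\[
\cod\Bigl(\bigcap_i\widetilde\tau(F_i)\Bigr)=\sum_i\cod\widetilde\tau(F_i)=\sum_i\cod\widetilde\tau(f_i)=\cod\Bigl(\bigcap_i\widetilde\tau(f_i)\Bigr)+1,
\]
the last equality being the almost regular intersection of order $1$. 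Taking dimensions in ambient spaces of dimension $p+1$ and $p$ respectively gives $\dim\widetilde\tau(F)=\dim\bigl(\bigcap_i\widetilde\tau(f_i)\bigr)$ with no need to construct any liftable field by hand. With this adjustment your argument is complete and coincides with the paper's.
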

\begin{proof}
Since $f$ has a 1-parameter stable unfolding, by Proposition
\ref{almreg} the analytic strata have almost regular intersection.

By Proposition \ref{totnon} we have that if $f$ is an augmentation
then its branches are totally non-transverse, that is, $1\leq
\dim(\widetilde\tau(f_1)\cap \ldots \cap \widetilde\tau(f_r))$. We
have
$$\sum_{i=1}^r \cod(\widetilde\tau(f_i))=\cod(\bigcap_{i=1}^r \widetilde\tau(f_i))+1=p-\dim(\bigcap_{i=1}^r \widetilde\tau(f_i))+1\leq p.$$

On the other hand, the fact of the analytic strata having almost
regular intersection, together with $\sum_{i=1}^r
\cod(\widetilde\tau(f_i))\leq p$ implies that
$\dim(\widetilde\tau(f_1)\cap \ldots \cap \widetilde\tau(f_r))\geq
1$. If we consider a 1-parameter stable unfolding
$F=\{F_1,\ldots,F_r\}$ of $f$, since $f_i$ is stable for all
$i=1,\ldots,r$, $F_i$ is equivalent to a prism on $f_i$, therefore
$\cod\widetilde\tau(F_i)=\cod\widetilde\tau(f_i)$ for all $i$. It
follows that $$\cod(\bigcap_{i=1}^r
\widetilde\tau(F_i))=\sum_{i=1}^r
\cod(\widetilde\tau(F_i))=\sum_{i=1}^r
\cod(\widetilde\tau(f_i))=\cod(\bigcap_{i=1}^r
\widetilde\tau(f_i))+1$$ and therefore
$\dim(\widetilde\tau(F))=\dim(\bigcap_{i=1}^r
\widetilde\tau(F_i))=\dim(\bigcap_{i=1}^r \widetilde\tau(f_i))\geq
1$ and by Theorem \ref{aughous} we have that $f$ is an augmentation.
\end{proof}

This result can also be obtained as a corollary of Theorem
\ref{aughous}. However, this reformulation will be used in Section
5.

\begin{ex}
\begin{enumerate}
\item[i)]Any multigerm involving two cuspidal edges in $\mathbb K^3$ can never be an augmentation because the sum of the codimension of the analytic strata is 4.

\item[ii)] Any multigerm from $\mathbb K^2$ to $\mathbb K^3$ involving a cross-cap and other immersive branches can never be an augmentation because the codimension of the analytic stratum of the cross-cap is already 3.

\item[iii)] Using the classical Arnol'd notation for multigerms in the case $n=p$ where $A_i^kA_j$ represents a multigerm with $k$ branches of type $A_i$ and a branch of type $A_j$ where all of the branches are pairwise transversal, then it is an augmentation if and only if $(ik+j)\leq p$.
\end{enumerate}
\end{ex}

\section{New operations}

\begin{definition}
Suppose $f:(\mathbb K^n,S)\rightarrow (\mathbb K^p,0)$ is non-stable
of finite $\mathcal{A}_e$-codimension and has a 1-parameter stable
unfolding $F(x,\lambda)=(f_{\lambda}(x),\lambda)$.  Let $k\geq 0$
and $g:(\mathbb K^p\times\mathbb K^k,0)\rightarrow (\mathbb
K^p\times\mathbb K,0)$ be the fold map
$(X,v)\mapsto(X,\Sigma_{j=1}^k v_j^2)$ (when $k=0$ $g(X)=(X,0)$).
Then the multigerm $\{F,g\}$ is called a monic concatenation of $f$.
\end{definition}

\begin{teo}\label{codconc}(\cite{robertamond}) The following relation holds:
$$\mathcal{A}_e-cod(\{F,g\})=\mathcal{A}_e-cod(f).$$
\end{teo}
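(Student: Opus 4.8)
The plan is to compute the normal module $N\mathcal{A}_e(\{F,g\})$ by hand and to match it with $N\mathcal{A}_e(f)$ through Damon's pull-back formula. The first step is a reduction for a multigerm one of whose branches (here the whole sub-multigerm $F=\{F_1,\dots,F_r\}$) is stable. Since $F$ is stable, $\theta(F)=tF(\theta_{n+1})+wF(\theta_{p+1})$, and in
$$N\mathcal{A}_e(\{F,g\})=\frac{\theta(F)\oplus\theta(g)}{tF(\theta_{n+1})\oplus tg(\theta_{p+k})+\{(wF(\eta),wg(\eta)):\eta\in\theta_{p+1}\}}$$
every class admits a representative of the form $(0,\zeta)$ with $\zeta\in\theta(g)$: if $\xi_F=tF(a)+wF(\eta)$, subtract $(tF(a)+wF(\eta),wg(\eta))$. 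A short chase shows that $(0,\zeta)$ vanishes precisely when $\zeta\in tg(\theta_{p+k})+wg(\mathrm{Lift}(F))$, because the condition forcing the $F$-component to vanish is exactly $\eta\in\mathrm{Lift}(F)$. Hence
$$N\mathcal{A}_e(\{F,g\})\;\cong\;\frac{\theta(g)}{tg(\theta_{p+k})+wg(\mathrm{Lift}(F))},$$
which is the bigerm reduction already used in \cite{robertamond}; I would record it as a preliminary lemma.

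Next I would carry out the fold computation. Write the common target coordinates as $(X,t)\in\mathbb{K}^p\times\mathbb{K}$ and the source of $g$ as $(X,v)\in\mathbb{K}^p\times\mathbb{K}^k$, so that $g=\mathrm{id}_{\mathbb{K}^p}\times Q$ with $Q(v)=\sum_j v_j^2$. From $dg(\partial_{X_i})=\partial_{X_i}$ and $dg(\partial_{v_j})=2v_j\partial_t$ one gets $tg(\theta_{p+k})=\big(\bigoplus_i\mathcal{O}_{p+k}\partial_{X_i}\big)\oplus(v_1,\dots,v_k)\mathcal{O}_{p+k}\partial_t$, whence $\theta(g)/tg(\theta_{p+k})\cong\big(\mathcal{O}_{p+k}/(v_1,\dots,v_k)\big)\partial_t\cong\mathcal{O}_p\partial_t$. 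For a liftable field $\eta=\sum_i a_i(X,t)\partial_{X_i}+b(X,t)\partial_t\in\mathrm{Lift}(F)$, the composite $wg(\eta)=\eta\circ g$ reduces, modulo $tg(\theta_{p+k})$ and modulo $(v_1,\dots,v_k)$ (note $Q(v)\in(v_1,\dots,v_k)$), to $b(X,0)\partial_t$. Therefore
$$N\mathcal{A}_e(\{F,g\})\;\cong\;\mathcal{O}_p/\{\,b(X,0):(a_1,\dots,a_p,b)\in\mathrm{Lift}(F)\,\};$$
when $k=0$ the map $g$ is just the inclusion $i\colon X\mapsto(X,0)$ and this case is immediate from the lemma.

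Finally I would recognise the right-hand side as $N\mathcal{A}_e(f)$. The germ $f$ is the pull-back of the stable unfolding $F$ by $i\colon(\mathbb{K}^p,0)\to(\mathbb{K}^p\times\mathbb{K},0)$, $i(X)=(X,0)$, so Damon's theorem gives $\mathcal{A}_e$-cod$(f)=\dim_\mathbb{K}\theta(i)/(ti(\theta_p)+i^*\mathrm{Derlog}(D(F)))$. The same elementary manipulation yields $\theta(i)/ti(\theta_p)\cong\mathcal{O}_p\partial_t$, and $i^*\eta=\eta\circ i$ reduces to $b(X,0)\partial_t$ for $\eta$ as above; using $\mathrm{Lift}(F)=\mathrm{Derlog}(D(F))$, which holds because $F$ is stable (the inclusion $\mathrm{Lift}\subseteq\mathrm{Derlog}$ recorded in \S2 is an equality over $\mathbb{C}$, and is also an equality for stable germs over $\mathbb{R}$ — alternatively one works with $\mathrm{Lift}(F)$ throughout), the quotient becomes $\mathcal{O}_p/\{b(X,0):(a_1,\dots,a_p,b)\in\mathrm{Lift}(F)\}$, exactly the module found in the previous paragraph. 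Comparing $\mathbb{K}$-dimensions gives $\mathcal{A}_e$-cod$(\{F,g\})=\mathcal{A}_e$-cod$(f)$.

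The two module computations — for the fold $g$ and for the inclusion $i$ — are routine and short. The step that demands care is the multigerm reduction with a stable branch, in particular verifying that the class of $(0,\zeta)$ vanishes precisely when $\zeta\in tg(\theta_{p+k})+wg(\mathrm{Lift}(F))$, together with the clean identification $\mathrm{Lift}(F)=\mathrm{Derlog}(D(F))$ for the stable unfolding $F$, which is what makes Damon's target-side formula line up with the hands-on computation. I do not anticipate a deeper obstacle than this bookkeeping.
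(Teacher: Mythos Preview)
The paper does not supply its own proof of this theorem; it is quoted from \cite{robertamond}. Your argument is correct, and it is precisely the method the paper itself deploys in the very next result (Theorem~\ref{augconc}): the short exact sequence isolating $\theta(g)/(tg(\theta_{p+k})+wg(\mathrm{Lift}(F)))$, the projection to the last target coordinate identifying this with $\mathcal{O}_p/dZ(i^*(\mathrm{Lift}(F)))$, and Damon's pull-back formula recognising that quotient as $N\mathcal{A}_e(f)$. In your situation the third term $N\mathcal{A}_e(F)$ of the sequence vanishes because $F$ is stable, which is why the inequality in Theorem~\ref{augconc} collapses to the equality here.
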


In the previous definition we set $p+k=n+1$. We now introduce a new
operation which merges the two earlier ones, a simultaneous
augmentation and monic concatenation.

\begin{teo}\label{augconc}
Suppose $f:(\mathbb K^n,S)\rightarrow (\mathbb K^p,0)$ has a
1-parameter stable unfolding
$F(x,\lambda)=(f_{\lambda}(x),\lambda)$.  Let $g:(\mathbb
K^p\times\mathbb K^{n-p+1},0)\rightarrow (\mathbb K^p\times\mathbb
K,0)$ be the fold map $(X,v)\mapsto(X,\Sigma_{j=p+1}^{n+1} v_j^2)$.
Then,

i) the multigerm $\{A_{F,\phi}(f),g\}$, where
$\phi:\K\rightarrow\K$, has
$$\mathcal{A}_e-cod(\{A_{F,\phi}(f),g\})\geq\mathcal{A}_e-cod(f)(\tau(\phi)+1),$$
where $\tau$ is the Tjurina number of $\phi$. Equality is reached
when $\phi$ is quasi-homogeneous and $\langle
dZ(i^*(Lift(A_{F,\phi}(f))))\rangle=\langle dZ(i^*(Lift(F)))\rangle$
where $i:\mathbb K^p\rightarrow \mathbb K^{p+1}$ is the canonical
immersion $i(X_1,\ldots,X_{p})=(X_1,\ldots,X_{p},0)$ and $Z$ is the
last component of the target.

ii) $\{A_{F,\phi}(f),g\}$ has a 1-parameter stable unfolding.
\end{teo}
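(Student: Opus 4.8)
The plan is to prove the two parts of Theorem \ref{augconc} by reducing everything to the known behaviour of augmentations (Theorem \ref{aughous}, Theorem 3.2) and monic concatenations (Theorem \ref{codconc}), interpreting both through Damon's pull-back picture. For part (i), I would first set up notation: write $h=A_{F,\phi}(f):(\mathbb K^n\times\mathbb K^{q},\widetilde S)\to(\mathbb K^{p}\times\mathbb K^{q},0)$ where $q=\dim$ of the source of $\phi$ (here $q=1$, with $\phi:\mathbb K\to\mathbb K$), so $h(x,z)=(f_{\phi(z)}(x),z)$, and recall from Theorem 3.2 that $\mathcal A_e$-cod$(h)\geq \mathcal A_e$-cod$(f)\tau(\phi)$ with equality when $\phi$ is quasi-homogeneous. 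Then $\{h,g\}$ is, by the very definition just given, a monic concatenation of $h$ provided $h$ has a $1$-parameter stable unfolding $H$; granting that (it is exactly part (ii), proved below), Theorem \ref{codconc} gives $\mathcal A_e$-cod$(\{H,g\})=\mathcal A_e$-cod$(h)$. But $\{A_{F,\phi}(f),g\}=\{h,g\}$ is \emph{not} literally $\{H,g\}$ — in the concatenation $\{H,g\}$ the first branch is the stable unfolding $H$, whereas here the first branch is $h$ itself. So the genuine content is: compute $\mathcal A_e$-cod$(\{h,g\})$ where the fold branch $g$ is glued to the non-stable branch $h$. I would do this by the same local-algebra argument that proves Theorem \ref{codconc}: pass to Damon's transverse square, write $h=i^*(H)$ and note $g$ is itself stable, so $\mathcal A_e$-cod$(\{h,g\})=\mathcal K_{D(H\sqcup g),e}$-cod of the corresponding inclusion; the contribution of the stable fold branch adds nothing new to the normal space provided $\langle dZ(i^*(\mathrm{Lift}(h)))\rangle=\langle dZ(i^*(\mathrm{Lift}(F)))\rangle$, which is precisely the stated hypothesis. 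This gives $\mathcal A_e$-cod$(\{h,g\})=\mathcal A_e$-cod$(h)$, and combining with Theorem 3.2 yields $\mathcal A_e$-cod$(\{A_{F,\phi}(f),g\})\geq\mathcal A_e$-cod$(f)(\tau(\phi)+1)$, with equality under quasi-homogeneity of $\phi$ together with the $\mathrm{Lift}$ condition.

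For part (ii), the cleanest route is to exhibit the stable unfolding explicitly. Following the proof of Theorem 4.6 in \cite{houston}, $h=A_{F,\phi}(f)$ has $h(x,z)=(f_{\phi(z)}(x),z)$ and a $1$-parameter stable unfolding $H(x,z,\lambda)=(f_{\phi(z)+\lambda}(x),z,\lambda)$, with $H\sim_{\mathcal A}F\times \mathrm{id}$. I would then unfold the concatenation $\{h,g\}$ by simultaneously unfolding the augmented branch to $H$ and trivially prism-unfolding the fold branch $g$ (a stable map, so $g\times\mathrm{id}$ is stable), and check that the resulting two-branch germ $\{H,\,g\times\mathrm{id}\}$ is stable. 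Stability is verified via Mather's criterion: each branch is stable (H is stable by construction, $g\times\mathrm{id}$ is a stable fold), so it remains to check that the analytic strata $\widetilde\tau$ of the two branches are transverse in the enlarged target $\mathbb K^{p}\times\mathbb K\times\mathbb K$ (or rather after adding the unfolding parameter). Here the new $\lambda$-direction sits inside $\widetilde\tau(H)$ — indeed $H$ is stable so $\widetilde\tau(H)=T_0(\mathbb K^{p}\times\mathbb K)$ — which forces transversality of the two strata regardless of their position before unfolding. Hence $\{H,g\times\mathrm{id}\}$ is stable and is a $1$-parameter unfolding of $\{A_{F,\phi}(f),g\}$, proving (ii).

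The main obstacle, and the step I would spend the most care on, is the equality case in (i): showing that adding the stable fold branch $g$ to the non-stable branch $h$ contributes exactly a factor of $1$ (not more) to $\mathcal A_e$-cod$(f)\tau(\phi)$ — i.e. that $\mathcal A_e$-cod$(\{h,g\})=\mathcal A_e$-cod$(h)$ — and pinning down that this needs precisely the hypothesis $\langle dZ(i^*(\mathrm{Lift}(A_{F,\phi}(f))))\rangle=\langle dZ(i^*(\mathrm{Lift}(F)))\rangle$. The subtlety is that $\mathcal A_e$-cod of a multigerm is \emph{not} additive over branches; the interaction term is governed by how the liftable vector fields of each branch fit together, and for the monic concatenation $\{H,g\}$ this interaction vanishes (Theorem \ref{codconc}) precisely because $H$ is stable. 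When the first branch is the \emph{non-stable} $h$ instead of $H$, one must show the fold branch still imposes no extra conditions beyond those already counted; this is exactly where one tracks $dZ$-projections of liftable fields, and the displayed condition says the last-coordinate data seen by $h$ is no richer than that seen by its stable model $F$. I would handle this by a direct comparison of the relevant $\mathcal K_{D,e}$-normal spaces, following the computation in \cite{robertamond} for monic concatenations and isolating where stability of the first branch was used, then substituting the $\mathrm{Lift}$-hypothesis at that point.
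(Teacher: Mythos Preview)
Your part~(i) contains a genuine error that breaks the argument. You assert that under the $\mathrm{Lift}$ hypothesis the fold branch ``adds nothing new to the normal space'', i.e.\ $\mathcal A_e\text{-cod}(\{h,g\})=\mathcal A_e\text{-cod}(h)$ where $h=A_{F,\phi}(f)$. But this cannot be right even heuristically: already in the plain monic concatenation of Theorem~\ref{codconc} the first branch $F$ is \emph{stable}, so $\mathcal A_e\text{-cod}(F)=0$, yet $\mathcal A_e\text{-cod}(\{F,g\})=\mathcal A_e\text{-cod}(f)>0$ --- the fold branch \emph{always} contributes. With your formula, Theorem~3.2 would only yield $\mathcal A_e\text{-cod}(\{h,g\})\geq\mathcal A_e\text{-cod}(f)\,\tau(\phi)$; the ``$+1$'' is simply absent from your arithmetic, and no amount of comparing $\mathcal K_{D,e}$-normal spaces will produce it from the equality $\mathcal A_e\text{-cod}(\{h,g\})=\mathcal A_e\text{-cod}(h)$.

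The paper's route is different. One writes down the short exact sequence
\[
0\longrightarrow \frac{\theta(g)}{tg(\theta_{n+1})+wg(\mathrm{Lift}(Af))}\longrightarrow N\mathcal A_e(\{Af,g\})\longrightarrow N\mathcal A_e(Af)\longrightarrow 0,
\]
so that $\mathcal A_e\text{-cod}(\{Af,g\})=\mathcal A_e\text{-cod}(Af)+\dim_{\mathbb K}\frac{\theta(g)}{tg(\theta_{n+1})+wg(\mathrm{Lift}(Af))}$. Projecting to the last target coordinate identifies this second summand with $\mathcal O_p/\langle dZ(i^*(\mathrm{Lift}(Af)))\rangle$, while Damon's theorem applied to $f=i^*(F)$ gives $\mathcal A_e\text{-cod}(f)=\dim_{\mathbb K}\mathcal O_p/\langle dZ(i^*(\mathrm{Lift}(F)))\rangle$. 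Hence the second summand is $\geq\mathcal A_e\text{-cod}(f)$, with equality precisely when the two $dZ$-ideals coincide; combined with Theorem~3.2 this gives $\mathcal A_e\text{-cod}(\{Af,g\})\geq\mathcal A_e\text{-cod}(f)\tau(\phi)+\mathcal A_e\text{-cod}(f)=\mathcal A_e\text{-cod}(f)(\tau(\phi)+1)$. The ``$+1$'' is the \emph{additive} contribution $\mathcal A_e\text{-cod}(f)$ coming from the fold, not a trivial multiplicative factor.

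A smaller slip in part~(ii): the claim ``$H$ is stable so $\widetilde\tau(H)=T_0(\mathbb K^p\times\mathbb K)$'' is false --- stability does not force the analytic stratum to be the whole tangent space (a stable fold already has $\widetilde\tau$ of codimension~$1$). Your explicit unfolding $\{H,g\times\mathrm{id}\}$ is the same one the paper writes down, and the strata \emph{are} transverse, but you must check it honestly: $\widetilde\tau(g\times\mathrm{id})=\mathbb K^p\times\{0\}\times\mathbb K$ has codimension~$1$, and $\partial/\partial Z-\phi'(Z)\,\partial/\partial M\in\mathrm{Lift}(H)$ supplies the missing $Z$-direction at the origin.
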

\begin{proof}
We write $Af$ for $A_{F,\phi}(f)$. Consider the map
$h:\theta(Af)\oplus\theta(g)\rightarrow
\frac{\theta(Af)}{T\mathcal{A}_e Af}$ which maps
$(\xi_1,\xi_2)\mapsto \overline{\xi_1}$. Since
$h(T\mathcal{A}_e\{Af,g\})=\{0\}$ we consider the induced map
$$\overline{h}:\frac{\theta(Af)\oplus\theta(g)}{T\mathcal{A}_e\{Af,g\}}\rightarrow
\frac{\theta(Af)}{T\mathcal{A}_e Af}$$ where
$\overline{h}(\overline{(\xi_1,\xi_2)})=0$ if and only if
$h(\xi_1,\xi_2)=0$.

Obviously, any element in $\ker(\overline{h})$ can be taken to the
form $\overline{(0,\xi_2)}$. Now, if $\eta\in Lift(Af)$, then there
exists $\rho\in \theta_{n+1}$ such that $dAf\circ\rho=\eta\circ Af$.
Then, for any $\delta\in \theta_{n+1}$
$$\overline{(0,\xi_2)}=\overline{(0,\xi_2)-[(dAf\circ\rho,dg\circ\delta)+(\eta\circ
Af,\eta\circ g)]}=\overline{(0,\xi_2-dg\circ\delta-\eta\circ g)},$$
which means that
$$ker(\overline{h})\cong\frac{\theta(g)}{tg(\theta_{n+1})+wg(Lift(Af))}.$$

So we have the short exact sequence:

\begin{equation*}
\begin{CD}
0@>>> \frac{\theta(g)}{tg(\theta_{n+1})+wg(Lift(Af))} @>>> N\mathcal
A_e(\{Af,g\}) @>{\overline{h}}>> N\mathcal A_e(Af) @>>> 0
\end{CD}
\end{equation*}

Then
$$\mathcal{A}_e-cod(\{Af,g\})=\mathcal{A}_e-cod(Af)+dim_{\mathbb K}
\frac{\theta(g)}{tg(\theta_{n+1})+wg(Lift(Af))}.$$

Note that $tg(\theta_{n+1})=\sum_{l=1}^p\mathcal
O_{n+1}\frac{\partial}{\partial X_l}+\sum_{j=p+1}^{n+1}\mathcal
O_{n+1}v_j\frac{\partial}{\partial Z}$, therefore, by projection to
the last component we have that
$$\frac{\theta(g)}{tg(\theta_{n+1})\!+\!wg(Lift(Af))}\!\cong\!
\frac{\mathcal O_{n+1}}{\langle
v_{p+1},\ldots,v_{n+1}\rangle\!+\!dZ(wg(Lift(Af)))}\!\cong\!
\frac{\mathcal O_{p}}{dZ(i^*(Lift(Af)))}.$$

Since $F$ is stable, by Damon's theorem
$\mathcal{A}_e$-cod$(f)=dim_{\mathbb
K}\frac{\theta(i)}{ti(\theta_p)+i^*(Lift(F))}$. As $i(X)=(X,0)$, it
follows that $\frac{\theta(i)}{ti(\theta_p)+i^*(Lift(F))}$ is
isomorphic to $\frac{\mathcal O_p}{dZ(i^*(Lift(F)))}$ by projection
to the last component.

Therefore, $dim_{\mathbb K}\frac{\mathcal O_p}{dZ(i^*(Lift(F)))}\leq
dim_{\mathbb K}\frac{\mathcal O_p}{dZ(i^*(Lift(Af)))}$ gives
$$\mathcal{A}_e-cod(\{Af,g\})\geq
\mathcal{A}_e-cod(Af)+\mathcal{A}_e-cod(f)\geq$$
$$\mathcal{A}_e-cod(f)\tau(\phi)+\mathcal{A}_e-cod(f)=\mathcal{A}_e-cod(f)(\tau(\phi)+1)$$and
equality is reached when $\phi$ is quasi-homogeneous and $\langle
dZ(i^*(Lift(Af)))\rangle =\langle dZ(i^*(Lift(F)))\rangle$.

ii) The 1-parameter unfolding $\{A_{F,\phi'}(f),G\}$ where
$\phi'(z,\mu)=\phi(z)+\mu$ and $G(X,v,\mu)=(X,\Sigma_{j=p+1}^{n+1}
v_j^2,\mu)$ is stable since each branch is stable and the analytic
strata have regular intersection.
\end{proof}

\begin{rem}
The exact sequence in the proof above remains exact when we replace
$Af$ by any finitely determined map-germ $H$ so we can deduce that
if $\mathcal M_p\subseteq dZ(i^*(Lift(H)))$, then $\mathcal
A_e-cod(\{H,g\})\leq\mathcal A_e-cod(H)+1$.
\end{rem}


\begin{ex}
\begin{enumerate}
\item[i)] Consider the family of augmentations of $f(x)=x^3$, $A^lf(x,z)=(x^3+z^lx,z)$, of $\mathcal A_e$-codimension $l-1$, where $\phi(z)=z^l$ is the augmenting function. We calculate $Lift(A^lf)=\langle 3lX\frac{\partial}{\partial X}+2Z\frac{\partial}{\partial Z},-2lZ^{3l-1}\frac{\partial}{\partial X}+9X\frac{\partial}{\partial Z}\rangle$. So the bigerm
\begin{equation}
\begin{cases}
(x^3+z^lx,z)\\
(x,z^2)
\end{cases}
\end{equation}
has codimension $l$.

\item[ii)] Let $f_l(x,y)=(x^3+y^lx,y)$ and $F_l(x,y,z)=(x^3+y^lx+zx,y,z)$ with augmentations $A^mF_l(x,y,z)=(x^3+y^lx+z^mx,y,z)$ of codimension $(l-1)(m-1)$. The defining equation of the discriminant is $27X^2+4(Y^l+Z^m)^3=0$. For any $m\geq1$, $\frac{\mathcal O_p}{dZ(i^*(Lift(Af)))}=\langle 1,Y,\ldots,Y^{l-2}\rangle$, so the codimension of the bigerm $\{A^mF_l,g\}$ is $(l-1)(m-1)+(l-1)=(l-1)m$.

\item[iii)] The augmentation $Af(x,y,z)=(x^4+yx^2+y^2x+z^2x,y,z)$ of $f(x,y)=(x^4+yx^2+y^2x,y)$ has codimension 2. Calculations using the computer package Singular show that $\frac{\mathcal O_p}{dZ(i^*(Lift(Af)))}=\langle 1,Y\rangle$, so $\mathcal A_e-cod(\{(Af,g)\})=2+2=4$.
\end{enumerate}
\end{ex}

\begin{teo}
Up to $\mathcal A$-equivalence, if $\mathbb K=\C$ and $\mathcal
A_e-\cod(f)=1$, the multigerm $\{Af,g\}$ is independent of the
choice of miniversal unfolding $F$ of $f$.
\end{teo}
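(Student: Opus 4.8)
The plan is to reduce the claimed independence to the fact that, for $\mathcal A_e$-codimension $1$ germs, all miniversal unfoldings of $f$ are related by a parametrised family of right-left equivalences, and then to show that such equivalences propagate through the augmentation-concatenation construction. First I would recall that any two $1$-parameter stable unfoldings $F$ and $F'$ of a germ $f$ of $\mathcal A_e$-codimension $1$ are $\mathcal A$-equivalent as unfoldings: since $\widetilde\tau(F)$ is $1$-dimensional (by Theorem \ref{aughous}, as $q=1$ here) both $F$ and $F'$ restrict to $f$ along a $1$-dimensional analytic stratum, and the versality theorem for unfoldings gives diffeomorphism germs $\Phi$ of $(\K^n\times\K,S\times\{0\})$ and $\Psi$ of $(\K^p\times\K,0)$ with $F'\circ\Phi=\Psi\circ F$, which moreover can be taken to be unfoldings of the identity (i.e. of the form $\Phi(x,\lambda)=(\varphi_\lambda(x),\lambda)$, $\Psi(X,\Lambda)=(\psi_\Lambda(X),\Lambda)$ up to a reparametrisation of the $\lambda$-axis). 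This is the standard uniqueness of versal unfoldings and I would cite it rather than reprove it.

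Next I would track what happens to the two pieces $A_{F,\phi}(f)$ and $g$ under such an unfolding equivalence. The augmentation $A_{F,\phi}(f)(x,z)=(f_{\phi(z)}(x),z)$ depends on $F$ only through the family $f_\lambda$; replacing $F$ by $F'$ replaces $f_\lambda$ by $\psi_\lambda\circ f_{\sigma(\lambda)}\circ\varphi_\lambda^{-1}$ for suitable reparametrisation $\sigma$ of the $\lambda$-axis, so $A_{F',\phi}(f)$ is obtained from $A_{F,\phi}(f)$ by pre- and post-composition with diffeomorphism germs of the form $(x,z)\mapsto(\varphi_{\phi(z)}(x),z)$ and $(X,Z)\mapsto(\psi_{\phi(Z)}(X),Z)$, provided we also adjust $\phi$ by the reparametrisation $\sigma$ — and since $\mathcal A_e$-$\cod(f)=1$ forces $\phi$ to be right-equivalent to $z\mapsto z^{2}$ (the unique quasihomogeneous function with $\tau=1$), that adjustment is absorbed by a right equivalence in $z$. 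Crucially, the target diffeomorphism fixes the last coordinate $Z$ of $\K^p\times\K$ and acts on the first $p$ coordinates, hence it commutes appropriately with the fold map $g(X,v)=(X,\Sigma v_j^2)$: composing $g$ with the corresponding source diffeomorphism on its $X$-block and the same target diffeomorphism yields $g$ again up to $\mathcal A$-equivalence. Assembling the source diffeomorphisms on the disjoint branches and the single common target diffeomorphism exhibits $\{A_{F',\phi}(f),g\}$ as $\mathcal A$-equivalent to $\{A_{F,\phi}(f),g\}$.

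The main obstacle I anticipate is the bookkeeping of the branch points and the multigerm structure: one must check that the source diffeomorphism used to straighten the augmented branch and the one used to straighten the fold branch $g$ are induced by a \emph{single} well-defined target diffeomorphism of $(\K^p\times\K,0)$ — i.e. that the target change of coordinates bringing $F'$ to $F$ can simultaneously be made compatible with the normal form of $g$. This is where the hypotheses $\K=\C$ and $\mathcal A_e$-$\cod(f)=1$ do real work: over $\C$ the miniversal unfolding is essentially unique and the deformation $\phi\sim z^2$ has no moduli, so the reparametrisation $\sigma$ of the unfolding parameter is (up to right equivalence) trivial, and there is no obstruction to choosing the target diffeomorphism to restrict to the identity on the $Z$-axis along which $g$ is folded. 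A secondary, more routine point is to verify that the fold map $g$ is genuinely unchanged: since $g$ is stable and is in fact the versal unfolding of the Morse function $\Sigma v_j^2$, any post-composition by a diffeomorphism fixing $Z$ can be undone by a source diffeomorphism acting only on the $v$-variables, so this step reduces to the uniqueness of the normal form for folds and contributes nothing new.
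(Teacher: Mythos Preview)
Your overall strategy---show that the equivalence $Af\sim_{\mathcal A}A'f$ of augmentations is realised by a target diffeomorphism whose last component is compatible with the fold branch $g$---is the same as the paper's. The paper simply cites \cite{robertamond} for the fact that, when $\mathcal A_e\text{-}\cod(f)=1$ and $\K=\C$, one has $Af\sim_{\mathcal A}A'f$ via $\varphi(X,v)=(\varphi_1(X,v),\ldots,\varphi_p(X,v),\varphi_{p+1}(v))$, and then constructs $\theta$ with $\varphi\circ g=g\circ\theta$ by setting $\theta_j=\varphi_j\circ g$ for $j\le p$ and solving $\sum_{i>p}\theta_i^2=\varphi_{p+1}(\sum_{i>p}v_i^2)$ via a square root of the unit $\varphi_{p+1}(t)/t$.

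There is, however, a genuine gap in your argument. You assert that ``$\mathcal A_e\text{-}\cod(f)=1$ forces $\phi$ to be right-equivalent to $z\mapsto z^2$''. This is false: the hypothesis constrains $f$, not the augmenting function $\phi$, which can be $z^k$ for any $k\ge 2$ (giving $\cod(Af)=k-1$). Consequently the reparametrisation $\sigma$ of the unfolding parameter cannot in general be absorbed so as to make the target diffeomorphism \emph{fix} the last coordinate $Z$. What you actually obtain, once you trace through your own computation, is a target diffeomorphism whose last component is a function $\alpha(Z)$ of $Z$ alone---exactly the form $\varphi_{p+1}(v)$ in the paper---and the compatibility with $g$ then requires solving $\sum_i\theta_i^2=\alpha(\sum_iv_i^2)$. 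That is where $\K=\C$ is genuinely used (to extract a holomorphic square root of the unit $\alpha(t)/t$), not in any statement about $\phi$ or uniqueness of miniversal unfoldings. Your final paragraph, which treats the fold compatibility as ``routine'' under the assumption that the diffeomorphism fixes $Z$, therefore rests on an unproved and in general false claim.
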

\begin{proof}
Let $Af$ and $A'f$ be augmentations coming from different miniversal
unfoldings $F$ and $F'$ of $f$ which are $\mathcal A$-equivalent as
unfoldings. By \cite{robertamond}, $Af\sim_{\mathcal A}A'f$ via
diffeomorphisms $\varphi:(\C^p\times\C,0)\rightarrow
(\C^p\times\C,0)$ and $\psi:(\C^n\times\C,S\times\{0\})\rightarrow
(\C^n\times\C,S\times\{0\})$ such that $$\varphi\circ
Af=A'f\circ\psi$$ where
$\varphi(X,v)=(\varphi_1(X,v),\ldots,\varphi_p(X,v),\varphi_{p+1}(v))$.
To prove $\{Af,g\}\sim_{\mathcal A}\{A'f,g\}$ we need a
diffeomorphism $\theta$ such that $\varphi\circ g=g\circ\theta$.
This is satisfied with
$\theta(X,v)=(\theta_1(X,v),\ldots,\theta_{n+1}(X,v))$ where
$\theta_j(X,v)=\varphi_j\circ g(X,v)$ for $j\leq p$ and
$\sum_{i=p+1}^{n+1}(\theta_{i}(X,v))^2=\varphi_{p+1}(\sum_{i=p+1}^{n+1}v_i^2)$.
\end{proof}

\begin{rem}
$\{Af,g\}$ is still independent of the choice of miniversal
unfolding $F$ of $f$ when $f$ is of codimension higher than 1 if we
suppose that $Af\sim_{\mathcal A}A'f$ through a diffeomorphism of
the type
$\varphi(X,v)=(\varphi_1(X,v),\ldots,\varphi_p(X,v),\varphi_{p+1}(v))$.
\end{rem}

\subsection{Generalised concatenations}

Another type of operation is a concatenation with a stable germ
which generalises the monic concatenation defined earlier. It also
includes the binary concatenation as a particular case.

\begin{definition}  (\cite{robertamond})
Given germs $f_0:(\C^m,S)\rightarrow (\C^a,0)$ and
$g_0:(\C^l,T)\rightarrow (\C^b,0)$ with 1-parameter stable
unfoldings $F(y,s)=(f_s(y),s)$ and $G(x,s)=(g_s(x),s)$, the
multigerm $h$ with $|S|+|T|$ branches defined by

\begin{equation}
\begin{cases}
(X,y,u)\mapsto (X,f_u(y),u)\\
(x,Y,u)\mapsto (g_{u}(x),Y,u)
\end{cases}
\end{equation}
is called a binary concatenation of $f_0$ and $g_0$.
\end{definition}

\begin{definition}
Let $f:(\mathbb K^{n-s},S)\rightarrow (\mathbb K^{p-s},0)$, $s<p$,
be of finite $\mathcal{A}_e$-codimension and let $F:(\mathbb
K^{n},S\times \{0\})\rightarrow (\mathbb K^{p},0)$ be a
$s$-parameter stable unfolding of $f$ with
$$F(x_1,\ldots,x_n)=(F_1(x_1,\ldots,x_n),\ldots,F_{p-s}(x_1,\ldots,x_n),x_{n-s+1},\ldots,x_n),$$
where $F_i(x_1,\ldots,x_{n-s},0,\ldots,0)=f_i(x_1,\ldots,x_{n-s})$.
Suppose that $\overline g:(\mathbb K^{n-p+s},T)\rightarrow (\mathbb
K^{s},0)$ is stable. Then the multigerm $\{F,g\}$ is a generalised
concatenation of $f$ with $g$, where $g=Id_{\mathbb K^{p-s}}\times
\overline g$.
\end{definition}

Observe that with this definition, $\dim\widetilde\tau(g)\geq
p-s\geq 1$. If $g$ is a monogerm and $\dim\widetilde\tau(g)=p-s$, it
is of the form
$$g(x_1,\ldots,x_n)=(x_1,\ldots,x_{p-s},g_{p-s+1}(x_{p-s+1},\ldots,x_n),\ldots,g_{p}(x_{p-s+1},\ldots,x_n)).$$
Therefore, the definition implies that $F\pitchfork
\widetilde\tau(g)$.

\begin{rem}

\begin{enumerate}
\item[i)] The monic concatenation is recovered by taking $s=1$ and $g_p(x_p,\ldots,x_n)=\Sigma_{i=p}^{n}x_i^2$ (or $g_p=0$ when $n=p-1$).

\item[ii)] A binary concatenation $h=\{F,G\}$

\begin{equation}
\begin{cases}
(X,y,u)\mapsto (f_u(y),u,X)\\
(x,Y,u)\mapsto (Y,u,g_{u}(x))
\end{cases}
\end{equation}
is also a generalised concatenation where $p=a+1+b$, $n=b+m+1=l+a+1$
and $s=b+1$. In fact, the first branch is a $b+1$-parameter stable
unfolding of an $f_0$ and $\widetilde\tau((u,g_u(x)))=\{0\}$ when
$g_0$ is not stable.
\end{enumerate}
\end{rem}

In general this operation is difficult to control, so we study
particular cases where the stable germ $g$ is given, for example the
equidimensional case and where $\overline g$ is a cusp and
$\dim\widetilde\tau(g)=p-2$.

\begin{definition}
Consider $f:(\mathbb K^{n-2},S)\rightarrow (\mathbb K^{n-2},0)$ with
$n\geq 3$, $F(x,\lambda)=(f_{\lambda}(x),\lambda)$ a 2-parameter
stable unfolding of $f$ and
$$g(x_1,\ldots,x_{n-2},y,z)=(x_1,\ldots,x_{n-2},y,z^3+yz)$$ being a
suspension of a cusp. We call the multigerm $\{F,g\}$ the cuspidal
concatenation of $f$.
\end{definition}

In general $\mathcal{A}_e$-cod$(\{F,g\})$ depends on the choice of
2-parameter stable unfolding, so we give a recipe to calculate the
$\mathcal A_e$-codimension.

\begin{teo}\label{cuspconc}
Let $f:(\mathbb K^{n-2},S)\rightarrow (\mathbb K^{n-2},0)$ with
$n\geq 3$ and $\{F,g\}$ the cuspidal concatenation of $f$, then
$$\mathcal{A}_e-cod(\{F,g\})=\dim_{\mathbb
K}\frac{\mathcal{O}_{n-1}}{\{\xi:\xi=-z\eta_{n-1}(x,-3z^2,-2z^3)+\eta_{n}(x,-3z^2,-2z^3)\}},$$
where $\eta_{n-1}$ and $\eta_{n}$ are the last two components of
vector fields in $Lift(F)$.
\end{teo}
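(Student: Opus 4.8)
The plan is to run the same short exact sequence argument that drives the proof of Theorem \ref{augconc}, now with the stable branch $F$ in the distinguished role. Form the map $h:\theta(F)\oplus\theta(g)\to\theta(F)/T\mathcal A_e F$, $(\xi_1,\xi_2)\mapsto\overline{\xi_1}$; it annihilates $T\mathcal A_e\{F,g\}$, so it descends to a surjection $\overline h:N\mathcal A_e(\{F,g\})\to N\mathcal A_e(F)$. Exactly as before, every class in $\ker\overline h$ can be represented by some $\overline{(0,\xi_2)}$, and for $\eta\in Lift(F)$ one may subtract the element $(tF(\rho)-wF(\eta),\,tg(\delta)-wg(\eta))\in T\mathcal A_e\{F,g\}$ (where $tF(\rho)=wF(\eta)$) without changing the first coordinate, whence $\ker\overline h\cong\theta(g)/(tg(\theta_n)+wg(Lift(F)))$. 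Since $F$ is a stable multigerm, $N\mathcal A_e(F)=0$, so
$$\mathcal{A}_e-cod(\{F,g\})=\dim_{\mathbb K}\frac{\theta(g)}{tg(\theta_n)+wg(Lift(F))}.$$

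It then remains to compute this quotient in coordinates. Write the common target coordinates as $(X_1,\dots,X_{n-2},X_{n-1},X_n)$ and the source of $g$ as $(x_1,\dots,x_{n-2},y,z)$, so $g^*X_i=x_i$ for $i\le n-2$, $g^*X_{n-1}=y$ and $g^*X_n=z^3+yz$. Then $tg(\partial/\partial x_i)=\partial/\partial X_i$, $tg(\partial/\partial y)=\partial/\partial X_{n-1}+z\,\partial/\partial X_n$ and $tg(\partial/\partial z)=(3z^2+y)\,\partial/\partial X_n$. Hence $tg(\theta_n)$ absorbs $\mathcal O_n\,\partial/\partial X_i$ for $i\le n-2$, identifies $\partial/\partial X_{n-1}$ with $-z\,\partial/\partial X_n$, and leaves the relation $(3z^2+y)\,\partial/\partial X_n\equiv 0$; thus $\theta(g)/tg(\theta_n)\cong\mathcal O_n/\langle y+3z^2\rangle$, generated by the class of $\partial/\partial X_n$. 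Projecting $wg(\eta)=\sum_k\eta_k(g)\,\partial/\partial X_k$ for $\eta=\sum_k\eta_k(X)\,\partial/\partial X_k\in Lift(F)$ into this quotient gives the class $-z\,\eta_{n-1}(g)+\eta_n(g)$, so only the last two components of the liftable vector fields contribute.

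Finally, since $y+3z^2$ has nonvanishing $y$-derivative it may be taken as a coordinate, so $\mathcal O_n/\langle y+3z^2\rangle\cong\mathcal O_{n-1}$ in the variables $(x_1,\dots,x_{n-2},z)$, with $y\mapsto-3z^2$ and hence $z^3+yz\mapsto-2z^3$. Under this identification the argument $g(x,y,z)$ specialises to $(x,-3z^2,-2z^3)$, and the quotient above becomes
$$\frac{\mathcal{O}_{n-1}}{\{\xi:\xi=-z\,\eta_{n-1}(x,-3z^2,-2z^3)+\eta_n(x,-3z^2,-2z^3)\}},$$
which is the claimed formula. The step deserving most attention is the first one: verifying that the short exact sequence of Theorem \ref{augconc} applies verbatim when the distinguished branch is a stable multigerm rather than a monogerm, and that $Lift(F)$ — not $Derlog(D(F))$, which may be strictly larger when $\mathbb K=\mathbb R$ — is the correct module to quotient by. Once that is settled, the computation of $tg(\theta_n)$ and the substitution $y=-3z^2$ are routine.
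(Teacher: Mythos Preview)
Your proof is correct and follows the same route as the paper: the short exact sequence of Theorem \ref{augconc} with $F$ in the distinguished slot, stability of $F$ killing $N\mathcal A_e(F)$, and then the substitution $y\mapsto -3z^2$ to pass from $\mathcal O_n$ to $\mathcal O_{n-1}$. The only difference is organisational: the paper first projects to the last two components to get a quotient $(\mathcal O_n\oplus\mathcal O_n)/T$ and then invokes a second exact sequence (as in \cite{mancini}) to strip off the submersive component $g_{n-1}=y$, whereas you compute $\theta(g)/tg(\theta_n)\cong\mathcal O_n/\langle y+3z^2\rangle$ in one step --- this is slightly more direct and avoids the auxiliary reference.
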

\begin{proof}

Consider the exact sequence (see the proof of \ref{augconc})

\begin{equation*}
\begin{CD}
0@>>> \frac{\theta(g)}{tg(\theta_{n})+wg(Lift(F))} @>>> N\mathcal
A_e(\{F,g\}) @>>> N\mathcal A_e(F) @>>> 0.
\end{CD}
\end{equation*}
 Since $F$ is stable, $\dim_{\mathbb K} N\mathcal A_e(F)=0$, hence $\mathcal A_e$-cod$(\{F,g\})$ is equal to the dimension of $\frac{\theta(g)}{tg(\theta_{n})+wg(Lift(F))}$. By projecting to the last two components, this space is isomorphic to $\frac{\mathcal O_{n}\oplus\mathcal O_{n}}{T}$ where $$T=\{\left(\begin{array}{cc}1 & 0\cr z & 3z^2+y\cr\end{array}\right)\left(\begin{array}{c} v_{n-1}\cr v_{n}\cr\end{array}\right): v_{n-1},v_{n}\in \mathcal O_{n}\} +d(Y,Z)(wg(Lift(F))),$$ and $d(Y,Z)$ represents the last two components of $wg(Lift(F))$.

Let $$T_0=\{\xi: (0,\xi)\in
T\}=$$$$=\{\xi:\xi=-z\eta_{n-1}(x,y,z^3+yz)+(3z^2+y)v_{n}(x,y,z)+\eta_{n}(x,y,z^3+yz)\},$$
where $\eta=(\eta_1,\ldots,\eta_{n})\in Lift(F).$

Let $(g_{n-1},g_{n})$ be the last two components of $g$ and let
$$T_1=tg_{n-1}(\mathcal O_{n})+dY(wg(Lift(F))).$$

The following sequence is exact

\begin{equation*}
\begin{CD}
0@>>> \frac{\mathcal O_{n}}{T_0} @>{i^*}>> \frac{\mathcal
O_{n}\oplus\mathcal O_{n}}{T} @>{\pi^*}>>
\frac{\theta(g_{n-1})}{T_1} @>>> 0
\end{CD}
\end{equation*}
where $i$ is the inclusion and $\pi$ is the projection. The proof of
the exactness is analogous to the proof of Proposition 2.1 in
\cite{mancini}. Since $g_{n-1}$ is a submersion, $\frac{\mathcal
O_{n}\oplus\mathcal O_{n}}{T}$ and $\frac{\mathcal O_{n}}{T_0}$ are
isomorphic. The result follows from the fact that
$$\frac{\mathcal{O}_{n}}{T_0}\cong\frac{\mathcal{O}_{n-1}}{\{\xi:\xi=-z\eta_{n-1}(x,-3z^2,-2z^3)+\eta_{n}(x,-3z^2,-2z^3)\}}.$$
\end{proof}

\begin{prop}\label{a2an}
Let $f(x_1,\ldots,x_{n-2})=(x_1^{n+1}+x_2x_1+\ldots
+x_{n-2}x_1^{n-3},x_2,\ldots,x_{n-2})$ and let
$$F(x_1,\ldots,x_{n-2},y,z)=(x_1^{n+1}+x_2x_1+\ldots
+x_{n-2}x_1^{n-3}+yx_1^{n-2}+zx_1^{n-1},x_2,\ldots,x_{n-2},y,z),$$
the $A_n$ singularity. Then the cuspidal concatenation of $f$ has
$\mathcal{A}_e$-cod$(\{F,g\})=n$.
\end{prop}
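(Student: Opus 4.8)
The plan is to apply Theorem~\ref{cuspconc} directly to the given unfolding $F$ of the $A_n$ singularity and carry out the resulting local-algebra computation. First I would compute $Lift(F)$. Since $F$ is a stable equidimensional map-germ whose discriminant is a well-understood swallowtail-type hypersurface (the discriminant of the $A_n$ singularity, defined by the discriminant of the polynomial $x_1^{n+1}+x_2x_1+\dots+x_{n-2}x_1^{n-3}+yx_1^{n-2}+zx_1^{n-1}$ in $x_1$), the module $Lift(F)=Derlog(D(F))$ is generated by a standard set of vector fields: the Euler-type vector field coming from the quasi-homogeneous grading, together with the vector fields obtained by differentiating the versal family and substituting the critical-point relation. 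In particular I only need the last two components $\eta_{n-1}$ and $\eta_n$ of these generators, evaluated along the curve $(x_1,\dots,x_{n-2},y,z)\mapsto(x_1,\dots,x_{n-2},-3z^2,-2z^3)$, which is precisely the parametrisation of the cusp appearing in the statement of Theorem~\ref{cuspconc}.

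The key step is then to identify the ideal (more precisely, the $\mathcal O_{n-1}$-submodule of $\mathcal O_{n-1}$)
\[
J=\{\xi:\xi=-z\,\eta_{n-1}(x,-3z^2,-2z^3)+\eta_{n}(x,-3z^2,-2z^3)\}
\]
where $x=(x_1,\dots,x_{n-2})$ and the $\eta$'s range over the generators of $Lift(F)$, and to show that $\dim_{\mathbb K}\mathcal O_{n-1}/J=n$. I expect $J$ to be generated, after the substitution, by polynomials in $x_1,z$ together with the free variables $x_2,\dots,x_{n-2}$ entering linearly, so that the quotient reduces to a one-variable (or two-variable) computation. The most economical route is to exploit quasi-homogeneity: assign weights to $x_1,\dots,x_{n-2},y,z$ making $F$ quasi-homogeneous, check that the substituted generators of $J$ are quasi-homogeneous, and then either read off a monomial basis of $\mathcal O_{n-1}/J$ directly or compute the dimension via a Poincar\'e-series argument. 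A cross-check by the Singular computer algebra package (as the authors do elsewhere in the paper, e.g.\ Example~4.8) would confirm the count $n$ for small values of $n$.

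The main obstacle is the explicit determination of $Lift(F)$ in a form clean enough that the substitution $y=-3z^2$, $z\mapsto-2z^3$ is tractable for general $n$. For the $A_n$ singularity this is classical, but writing down the generators uniformly in $n$ and then tracking which monomials survive in the quotient requires some care; in particular one must verify that the seemingly many free parameters $x_2,\dots,x_{n-2}$ really do act freely modulo $J$ and contribute the bulk of the dimension, and that the ``vertical'' part of the computation in the variables $x_1,z$ contributes the remaining amount so that the total is exactly $n$ and not $n-1$ or $n+1$. Once $Lift(F)$ is pinned down, the rest is a finite-dimensional linear algebra verification.
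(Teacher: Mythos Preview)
Your overall strategy---apply Theorem~\ref{cuspconc}, use the known generators of $Lift(F)$ for the $A_n$ discriminant, and compute the resulting quotient---is exactly the route the paper takes. However, there is a genuine gap in your plan, and it lies precisely at the point you flag as ``the main obstacle'', though not for the reason you give.

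You call $J$ ``the ideal (more precisely, the $\mathcal O_{n-1}$-submodule of $\mathcal O_{n-1}$)''. These are the same thing, and $J=T_\Sigma$ is \emph{neither}. The set $T_\Sigma$ is only a module over the subring $\varphi^*(\mathcal O_n)\subset\mathcal O_{n-1}$, where $\varphi(x,z)=(x,-3z^2,-2z^3)$; its image consists of those power series with no pure $z^1$-term. In particular you may multiply elements of $T_\Sigma$ by $x_j$ or by $z^2,z^3,\dots$, but \emph{not} by $z$. This is not a technicality: if $T_\Sigma$ were a genuine ideal, then from $x_1\in T_\Sigma$ (which follows from the Euler-type generator) you would get $x_1z\in T_\Sigma$, and inductively all $x_iz\in T_\Sigma$, collapsing the quotient to dimension $2$ rather than $n$. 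So a quasi-homogeneous/Poincar\'e-series argument treating $J$ as an ideal will give the wrong answer.

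Relatedly, your structural guess is inverted. The variables $x_2,\dots,x_{n-2}$ do \emph{not} act freely: each $x_i$ lies in $T_\Sigma$ modulo higher order (this is what the linear parts of the generators give). What survives in the quotient is $\{1,z,x_1z,\dots,x_{n-2}z\}$, i.e.\ the degree-one-in-$z$ monomials that cannot be reached because multiplication by $z$ is unavailable. The paper's argument proceeds exactly by multiplying the initial relations only by $x_j$ and $z^2$ (the allowed operations) and showing step by step that $\mathcal M^3\subset T_\Sigma$, after which the listed basis drops out. Once you recognise the correct module structure, your plan becomes essentially the paper's proof; without it, the computation fails.
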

\begin{proof}
We denote by
$T_{\Sigma}=\{\xi:\xi=-z\eta_{n-1}(x,-3z^2,-2z^3)+\eta_{n}(x,-3z^2,-2z^3)\}$.
From Theorem \ref{cuspconc}, $\mathcal A_e-cod(\{F,g\})=dim_{\mathbb
K}\frac{\mathcal O_{n-1}}{T_{\Sigma}}$. From \cite{bruce} it follows
that the linear part of the generators of $Lift(F)$ is
$$\{(n+1)X_1\frac{\partial}{\partial
Z},(n+1)X_1\frac{\partial}{\partial Y}+nX_2\frac{\partial}{\partial
Z},\ldots,$$ $$(n+1)X_1\frac{\partial}{\partial X_2}+\ldots
+4X_{n-2}\frac{\partial}{\partial Y}+3Y\frac{\partial}{\partial
Z},$$ $$(n+1)X_1\frac{\partial}{\partial X_1}+\ldots
+3Y\frac{\partial}{\partial Y}+2Z\frac{\partial}{\partial Z}\}.$$
The corresponding elements of the generators of $T_{\Sigma}$ and the
corresponding relations modulo $\mathcal M_{(x,z)}^3$ are

\begin{equation}\label{bru}
\begin{cases}
(n+1)x_1\\
-(n+1)zx_1+nx_2\\
\ldots\\
-5zx_{n-3}+4x_{n-2}\\
-4zx_{n-2}-9z^2\\
5z^3
\end{cases}
\Longrightarrow
\begin{cases}
(1)\,\,\,\, x_1\equiv 0\\
(2)\,\,\,\, x_2\equiv \frac{n+1}{n}zx_1\\
\ldots\\
(n-2)\,\,\,\, x_{n-2}\equiv \frac{5}{4}zx_{n-3}\\
(n-1)\,\,\,\, z^2\equiv -\frac{4}{9}zx_{n-2}
\end{cases}
\end{equation}

Note that $T_{\Sigma}$ is a $\mathcal O_{n-1}-$module via
$\varphi^*$ where $\varphi(x,z)=(x,-3z^2,-2z^3).$ It is clear that
we can obtain any element in $T_{\Sigma}$ of the type
$x^{\alpha}z^{\beta}$ for all $\alpha>0$ and $\beta=0$ or $\beta\geq
2$ modulo $\mathcal M^{\alpha+\beta+1}$ and of the type $z^{\beta}$
for all $\beta\geq 3$. It is also clear that constants and $z$ are
not in $T_{\Sigma}$. Lets see what other terms are in $T_{\Sigma}$:

In step I, we take relation (1) and multiply it by $x_j$ for all
$j=1,\ldots,n-2$ and by $z^2$ to obtain relations $x_1xj\equiv 0$
and $x_1z^2\equiv 0$ modulo $\mathcal M^4$. Now multiply relations
(2) to $(n-1)$ by $x_1$ and use the relations just obtained to get
relations $zx_1x_{j-1}\equiv 0$ modulo $\mathcal M^4$ for all
$j=2,\ldots,n-1$.

In step II, we take relation (2) and multiply it by $x_j$ for all
$j=2,\ldots,n-2$ and by $z^2$ to obtain relations $x_2xj\equiv
zx_1x_j$ ($\equiv 0$ by step I) and $x_2z^2\equiv 0$ modulo
$\mathcal M^4$. Now multiply relations (3) to $(n-1)$ by $x_2$ and
use the relations just obtained to get relations $zx_2x_{j-1}\equiv
0$ modulo $\mathcal M^4$ for all $j=3,\ldots,n-1$.

We go on until finally we have that $\mathcal M^3\subseteq
T_{\Sigma}+\mathcal M^4$. In the same way we prove that $\mathcal
M^k\subseteq T_{\Sigma}+\mathcal M^{k+1}$ for $k\geq4$, so we have
that $\mathcal M^3\subseteq T_{\Sigma}+\mathcal M^{k}$ for all
$k\geq4$. Therefore, $\mathcal M_3\subseteq T_{\Sigma}.$ A basis for
$\frac{\mathcal O_{n-1}}{T_{\Sigma}}$ is $1,z,x_1z,\ldots,x_{n-2}z$,
which proves the result.
\end{proof}

\begin{teo}\label{codn}
Let $F:(\K^n,S)\rightarrow(\K^n,0)$ be a stable (multi)germ with
$\widetilde\tau(F)=0$ and
$g(x_1,\ldots,x_{n-2},y,z)=(x_1,\ldots,x_{n-2},y,z^3+yz)$, then
$\mathcal{A}_e$-cod$(\{F,g\})\geq n$ and is equal to $n$ when $g$ is
transversal to the limits of the tangent spaces to the strata of the
stratification by stable types of the discriminant of $F$ and
$F\pitchfork \widetilde\tau(g)$.
\end{teo}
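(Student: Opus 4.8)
The plan is to mimic the structure of the proof of Theorem \ref{cuspconc}, exploiting the same exact sequence and the submersivity of the first $n-1$ components of $g$, and then reduce the computation of $\dim_{\mathbb K}\theta(g)/(tg(\theta_n)+wg(Lift(F)))$ to a statement about the position of the image of $g$ relative to the discriminant $D(F)$. First I would write the short exact sequence
\begin{equation*}
\begin{CD}
0@>>> \frac{\theta(g)}{tg(\theta_{n})+wg(Lift(F))} @>>> N\mathcal A_e(\{F,g\}) @>>> N\mathcal A_e(F) @>>> 0,
\end{CD}
\end{equation*}
and since $F$ is stable, $\mathcal A_e$-$\cod(\{F,g\})=\dim_{\mathbb K}\theta(g)/(tg(\theta_n)+wg(Lift(F)))$. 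As in Theorem \ref{cuspconc}, projecting to the last two target coordinates and using that $g_{n-1}=y$ is a submersion, this dimension equals $\dim_{\mathbb K}\mathcal O_{n-1}/T_\Sigma$ where $T_\Sigma=\{\xi:\xi=-z\eta_{n-1}(x,-3z^2,-2z^3)+\eta_n(x,-3z^2,-2z^3),\ \eta\in Lift(F)\}$, viewed as an $\mathcal O_{n-1}$-module via $\varphi(x,z)=(x,-3z^2,-2z^3)$. So the whole theorem comes down to: (a) this quotient is always at least $n$-dimensional; (b) it is exactly $n$ under the two transversality hypotheses.

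For the lower bound I would argue geometrically: $\{F,g\}$ is a bigerm whose two branches are both stable (a prism on $f$ and a suspension of a cusp), so by Proposition \ref{almreg} (or directly) a failure of the analytic strata to meet transversally already forces positive codimension, but more to the point, the cuspidal branch $g$ has a singular analytic stratum of codimension $2$ in the target while $\widetilde\tau(F)=0$, so the two strata can never be transverse; the constants $1,z$ are never in $T_\Sigma$ (evaluating at $z=0$ shows $T_\Sigma$ lands in $\mathcal M_{n-1}$ when one of the generators has no constant part, and the $z^3+yz$ structure kills the $z$-term), and then an $\mathcal O_{n-1}$-module count forces dimension $\ge n$. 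Concretely, since $Lift(F)$ is generated (over $\mathcal O_p$) by $p=n$ vector fields and the substitution $y\mapsto -3z^2$, $Z\mapsto -2z^3$ ``collapses'' the $Z$-direction, a Nakayama/Jacobian-type argument shows at least $1+(n-1)=n$ independent classes survive, matching the basis $1,z,x_1z,\ldots,x_{n-2}z$ found in Proposition \ref{a2an}.

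For the upper bound, and the identification of equality, the point is that the two hypotheses are exactly what makes the estimate sharp. The condition $F\pitchfork\widetilde\tau(g)$ ensures the pull-back description of the concatenation behaves well (so the sequence above does not acquire extra kernel from a bad fibre-product), while transversality of $g$ to the limiting tangent spaces of the strata of the discriminant stratification of $F$ is the finite-determinacy/genericity input that bounds $\dim\mathcal O_{n-1}/T_\Sigma$ from above: along each stratum $W$ of $D(F)$, the vector fields in $Lift(F)$ restricted to $W$ span the tangent space to $W$, and requiring $g$ to be transverse to the limits $T_xW$ as $x$ tends to a more degenerate stratum is precisely the condition under which the restricted module $T_\Sigma$ fills up $\mathcal M_{n-1}^k$ for large $k$, leaving only the finite piece of dimension $n$. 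I would make this precise by stratifying the source $(\mathbb K^{n-1},0)$ of the map $\varphi$ by the $\varphi$-preimages of the strata of $D(F)$, showing $T_\Sigma$ contains $\mathcal M_{n-1}^N$ for some $N$ by the transversality, and then computing the residual quotient, which by the above must be exactly $1,z,x_1z,\ldots,x_{n-2}z$.

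\textbf{Main obstacle.} The delicate part is the upper bound: translating ``$g$ transverse to the limits of tangent spaces of the discriminant strata'' into the algebraic statement ``$T_\Sigma\supseteq\mathcal M_{n-1}^N$ for some $N$,'' i.e. finite-dimensionality of $\mathcal O_{n-1}/T_\Sigma$, and then pinning the dimension at exactly $n$ rather than merely $\ge n$ and $<\infty$. This requires a careful analysis of how the generators of $Lift(F)$, which are controlled stratum-by-stratum along $D(F)$, restrict under the cuspidal substitution $\varphi(x,z)=(x,-3z^2,-2z^3)$ (whose image is the singular locus, the cuspidal edge, of the suspended cusp); the transversality hypothesis is exactly the nondegeneracy needed so that no ``extra'' relations are lost. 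I expect the bookkeeping to parallel the explicit computation in the proof of Proposition \ref{a2an}, with the abstract transversality replacing the explicit Bruce-type generators used there.
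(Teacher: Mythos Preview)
Your starting point --- the exact sequence and the reduction to $\dim_{\mathbb K}\mathcal O_{n-1}/T_\Sigma$ via Theorem \ref{cuspconc} --- is the same as the paper's. But from there the paper's argument is very different from yours, and considerably more concrete. The paper does not attempt an abstract transversality-to-algebra translation. Instead it observes that the hypotheses pin down $F$ completely up to choice of partition: a corank $1$ stable (multi)germ $F:(\K^n,S)\to(\K^n,0)$ with $\widetilde\tau(F)=0$ is necessarily of type $A_{k_1}\cdots A_{k_r}$ with $k_1+\cdots+k_r=n$. Each $A_{k_i}$ is then a prism, so $Lift(F)=\bigcap_i Lift(A_{k_i})$ has an explicit block-diagonal description built from the $Lift(A_k)$ already written down in Proposition \ref{a2an}. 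The transversality hypotheses on $g$ translate, after a change of coordinates, into the statement that the blocks line up with the coordinate splitting used in Proposition \ref{a2an}, and one then literally repeats the step-by-step elimination argument there to get codimension exactly $n$; any other position of $g$ gives at least $n$.

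Your route has two soft spots. For the lower bound, the sentence ``a Nakayama/Jacobian-type argument shows at least $1+(n-1)=n$ independent classes survive'' is not a proof: you have not said why the $n-2$ classes $x_iz$ are independent in the quotient for a general $F$ with $\widetilde\tau(F)=0$, and this is not automatic from the module structure alone. For the upper bound --- which you correctly flag as the obstacle --- the program of showing $T_\Sigma\supseteq\mathcal M_{n-1}^N$ directly from ``transversality to limits of tangent spaces of strata'' and then computing the residual quotient is plausible in spirit but you have not indicated how to carry it out, and it is not clear it yields exactly $n$ rather than merely a finite number. The paper avoids both issues by using the classification of $F$; you should use it too.
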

\begin{proof}
The fact that $F$ is stable, has corank 1 and $\widetilde\tau(F)=0$
implies that $F$ is a singularity of type $A_{k_1}\ldots A_{k_r}$
where $k_1+\ldots +k_r=n$. In the case of lowest codimension, $g$ is
transversal to the limits of the tangent spaces to the strata of the
discriminant of $F$ and $F\pitchfork \widetilde\tau(g)$. In this
case, $A_{k_i}$ considered from $\mathbb K^n$ to $\mathbb K^n$ is a
$n-k_i$-prism on $A_{k_i}$ considered from $\mathbb K^{k_i}$ to
$\mathbb K^{k_i}$. Then $Lift(A_{k_i})$ is as in Proposition
\ref{a2an} together with $\frac{\partial}{\partial X_j}$ with $X_j$
varying on the remaining $n-k_i$ variables. Since $A_{k_1}\ldots
A_{k_r}$ is stable and all of the $A_{k_i}$ are stable, the
$Lift(F)$ is the intersection of the $Lift(A_{k_i})$ and so we can
see it as a diagonal block matrix where each block represents the
$Lift(A_{k_i})$. Following the proof of Proposition \ref{a2an} we
prove that the codimension of $\{F,g\}$ in this case is exactly $n$.
In other cases it is greater than or equal to $n$.
\end{proof}

The following examples illustrate how the cuspidal concatenation
depends on the choice of stable unfolding:

\begin{ex}
\begin{enumerate}
\item[i)] Let $f(x)=x^4$ and choose $F(x,y,z)=(x^4+yx+zx^2,y,z)$ (the swallowtail). Concatenating with a cuspidal edge we obtain the codimension 3 bigerm $A_2A_3$:

\begin{equation}
\begin{cases}
(x^4+yx+zx^2,y,z)\\
(x,y,z^3+yz)
\end{cases}
\end{equation}
Notice that this bigerm could not be obtained by any of the other
operations in the literature up to now. It can be seen that it is
not simple and the stratum codimension is 2.

Now, we let $F(x,y,z)=(x^4+yx^2+y^2x+zx,y,z)$, then the bigerm
$\{F,g\}$ is not $\mathcal A$-equivalent to $A_2A_3$ above. In
fact,$$Lift(F)=\langle 4X\frac{\partial}{\partial
X}+2Y\frac{\partial}{\partial Y}+(Y^2-3Z)\frac{\partial}{\partial
Z},$$ $$(Y^3+ZY)\frac{\partial}{\partial
X}+(-6Y^2-6Z)\frac{\partial}{\partial
Y}+(8X+2Y^2+12Y^3+12YZ)\frac{\partial}{\partial Z},$$
$$(-16XY-9Z^2-18Y^2Z-9Y^4)\frac{\partial}{\partial
X}+(48X+4Y^2)\frac{\partial}{\partial
Y}+(-96XY+12YZ+4Y^3)\frac{\partial}{\partial Z}\rangle,$$ and
calculations show that $\mathcal A_e-cod(\{F,g\})=4$.

\item[ii)] Consider $f(x)=x^3$ and the family of 2-parameter stable unfoldings $F_l(x,y,z)=(x^3+y^lx+zx,y,z)$. In this case $$Lift(F_l)=\langle 3lX\frac{\partial}{\partial X}+2Y\frac{\partial}{\partial Y}+2lZ\frac{\partial}{\partial Z},\frac{\partial}{\partial Y}+lY^{l-1}\frac{\partial}{\partial Z},2(Z+Y^l)^2\frac{\partial}{\partial X}-9X\frac{\partial}{\partial Z}\rangle.$$

Calculations show that $\mathcal A_e-cod(\{F_l,g\})=2$ for $l\geq
2$.

For $l=1$ we have the codimension 1 bigerm $A_2^2$ (which is a
binary concatenation, see \cite{robertamond}). For $l\geq 2$ we have
the codimension 2 bigerm $T_{22}^1$ (see \cite{mio}). It is
interesting to mention that in spite of the cuspidal edge in $F_l$
having a more degenerate contact with the limiting tangent plane to
$g$ as $l$ grows, this does not affect the codimension. Compare this
with example 4.5 ii) when $m=1$, where the codimension does
increase.

\item[iii)] Let $f(x)=\{x^2,x^3\}$, and $F_1$ and $F_2$ two different 2-parameter stable unfoldings:

    \begin{equation}
F_1=
\begin{cases}
(x^2+y+z,y,z)\\
(x^3+xy,y,z)
\end{cases}
\text{and   } F_2=
\begin{cases}
(x^2,y,z)\\
(x^3+xy+z,y,z)
\end{cases}
\end{equation}

One would expect $\{F_1,g\}\sim_{\mathcal A}\{F_2,g\}$ since they
are two cuspidal edges and a fold plane with pairwise transversal
branches. However, $\mathcal A_e-cod(\{F_1,g\})=3$ while $\{F_2,g\}$
is not finitely determined because it has a curve of triple points
in the image along $(0,-3t^2,2t^3)$.

\end{enumerate}
\end{ex}

Another type of generalised concatenation is to concatenate with two
fold hypersurfaces (in the equidimensional case, but the operation
can be defined for $n\neq p$ too).

\begin{definition}
Let $f:(\K^{n-2},S)\rightarrow(\K^{n-2},0)$ ($n\geq 3$) be a
finitely determined germ, $F(x,\lambda)=(f_{\lambda}(x),\lambda)$ a
2-parameter stable unfolding of $f$ and $g=\{g_1,g_2\}$ where
\begin{equation}
\begin{cases}
g_1(x_1,\ldots,x_{n-2},y,z)=(x_1,\ldots,x_{n-2},y,z^2)\\
g_2(x_1,\ldots,x_{n-2},y,z)=(x_1,\ldots,x_{n-2},y,z^2+y).
\end{cases}
\end{equation}
We call the multigerm $\{F,g\}$ the double fold concatenation of
$f$.
\end{definition}

As in the cuspidal concatenation, the $\mathcal A_e$-codimension of
$\{F,g\}$ depends on the choice of the 2-parameter stable unfolding.

\begin{teo}\label{doubleconc}
 Let $f:(\K^{n-2},S)\rightarrow(\K^{n-2},0)$ with $n\geq 3$ and let $\{F,g\}$ be the double fold concatenation of $f$, then $$\mathcal{A}_e-\cod(\{F,g\})=\mathcal{A}_e-\cod(\{F,g_1\})+ \dim_{\mathbb K}\frac{\mathcal{O}_{n-1}}{\{\xi:\xi=-\eta_{n-1}(x,y,y)+\eta_{n}(x,y,y)\}},$$ where $\eta_{n-1},\eta_n$ are the last two components of vector fields in $Lift(\{F,g_1\})$.
\end{teo}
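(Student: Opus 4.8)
The plan is to mimic, at one level deeper, the exact sequence strategy that was used for the monic concatenation in Theorem \ref{augconc} and for the cuspidal concatenation in Theorem \ref{cuspconc}. Write $g=\{g_1,g_2\}$ with $g_1,g_2$ as in the definition of the double fold concatenation. The starting point is the short exact sequence
\begin{equation*}
\begin{CD}
0@>>> \dfrac{\theta(g)}{tg(\theta_n)+wg(Lift(F))} @>>> N\mathcal{A}_e(\{F,g\}) @>>> N\mathcal{A}_e(F) @>>> 0,
\end{CD}
\end{equation*}
which holds exactly as in the proof of Theorem \ref{augconc} (the argument there only used that $g$ is a stable map whose lifts we can feed through $wg$, and nothing prevents $g$ from being a multigerm). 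Since $F$ is stable, $N\mathcal{A}_e(F)=0$, so $\mathcal{A}_e$-cod$(\{F,g\})=\dim_{\mathbb K}\theta(g)/(tg(\theta_n)+wg(Lift(F)))$. Because $g=\{g_1,g_2\}$ is a bigerm, $\theta(g)=\theta(g_1)\oplus\theta(g_2)$ and the module $tg(\theta_n)+wg(Lift(F))$ splits accordingly except for the coupling coming from $Lift(F)$ being applied simultaneously to both branches; this is precisely the situation of a concatenation where one branch is $\{F,g_1\}$ already analysed and the other branch adds $g_2$.

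The key step is to set up a second short exact sequence that peels off the $g_1$-part. Consider the map $\theta(g)\to\theta(g_1)/(tg_1(\theta_n)+wg_1(Lift(F)))=N\mathcal{A}_e(\{F,g_1\})$ (using Theorem \ref{codconc}/the same exact sequence for the monogerm concatenation $\{F,g_1\}$) sending $(\xi_1,\xi_2)\mapsto\overline{\xi_1}$. Exactly as in the proof of Theorem \ref{augconc}, its kernel, after reducing modulo $T\mathcal{A}_e$, consists of classes of the form $\overline{(0,\xi_2)}$, and using that any $\eta\in Lift(\{F,g_1\})$ lifts over the first branch one can subtract $wg_2$ of such $\eta$; this identifies the kernel with $\theta(g_2)/(tg_2(\theta_n)+wg_2(Lift(\{F,g_1\})))$. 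Note here that $Lift(\{F,g_1\})$, not $Lift(F)$, is the relevant module on the second branch, because the vector fields liftable over the first branch together with $g_1$ form exactly $Lift(\{F,g_1\})$ — this is the point where the recursion on the number of concatenated branches enters, and it is the analogue of the step in \cite{robertamond} where binary concatenation codimension is computed branch by branch. Thus
$$\mathcal{A}_e\text{-}\cod(\{F,g\})=\mathcal{A}_e\text{-}\cod(\{F,g_1\})+\dim_{\mathbb K}\frac{\theta(g_2)}{tg_2(\theta_n)+wg_2(Lift(\{F,g_1\}))}.$$

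It remains to compute the last term. Here I would project $\theta(g_2)$ onto its last two components, exactly as in the proof of Theorem \ref{cuspconc}: $tg_2(\theta_n)$ contains all of $\mathcal{O}_n\partial/\partial X_l$ for $l\le n-2$ and, via the Jacobian of $(y,z)\mapsto(y,z^2+y)$, contributes the submodule generated by the columns of $\left(\begin{smallmatrix}1&0\\1&2z\end{smallmatrix}\right)$. Since the $(n-1)$-st component map $g_{2,n-1}$ is a submersion, the analogue of Proposition 2.1 of \cite{mancini} gives an isomorphism between $\theta(g_2)/(tg_2(\theta_n)+wg_2(Lift(\{F,g_1\})))$ and $\mathcal{O}_n/T_0$, where $T_0=\{\xi:(0,\xi)\in T\}$ and $T$ is the image in $\mathcal{O}_n\oplus\mathcal{O}_n$. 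Writing out $(0,\xi)\in T$ and eliminating $v_n$ using the column $(0,2z)^{t}$ — equivalently restricting to $z$ odd / the locus where $2z$ is invertible, i.e. pulling back along $z\mapsto$ the appropriate parametrisation — reduces $\mathcal{O}_n/T_0$ to $\mathcal{O}_{n-1}/\{\xi:\xi=-\eta_{n-1}(x,y,y)+\eta_n(x,y,y)\}$, the substitution $z^2+y\mapsto y$ (so $z^2\mapsto$ the fibre, leaving $(x,y)$) accounting for the evaluation $\eta_\ast(x,y,y)$ of the last two components of vector fields in $Lift(\{F,g_1\})$. This yields the claimed formula.

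The main obstacle I expect is the bookkeeping in the second step: justifying carefully that the coupling module on the second branch is generated by $wg_2(Lift(\{F,g_1\}))$ rather than $wg_2(Lift(F))$, i.e. that adding the branch $g_1$ genuinely enlarges the group of liftable fields one may use when killing the $g_2$-contribution. This needs the observation that a vector field $\eta$ liftable over both $F$ and $g_1$ is exactly an element of $Lift(\{F,g_1\})$, and that such $\eta$ can be used in the relation $\overline{(0,\xi_2)}=\overline{(0,\xi_2-dg_2\circ\delta-\eta\circ g_2)}$ because its lift over the first branch, together with $\delta$, lies in $T\mathcal{A}_e\{F,g\}$. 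The remaining computation — the two projections to the last two coordinates and the submersion-quotient isomorphism — is routine and parallels Theorem \ref{cuspconc} verbatim, with $3z^2+y$ replaced by $2z$ and the substitution $z\mapsto-3z^2$, $z\mapsto-2z^3$ replaced by the single substitution $z^2\mapsto$ fibre variable, i.e. evaluation at $(x,y,y)$.
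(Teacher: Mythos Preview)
Your proposal is correct and follows essentially the same route as the paper. The only difference is cosmetic: the paper skips your first exact sequence (with $F$ alone) entirely and writes down directly the sequence
\[
0\longrightarrow \frac{\theta(g_2)}{tg_2(\theta_n)+wg_2(Lift(\{F,g_1\}))}\longrightarrow N\mathcal{A}_e(\{F,g\})\longrightarrow N\mathcal{A}_e(\{F,g_1\})\longrightarrow 0,
\]
which is precisely your ``second short exact sequence'' after your identification $N\mathcal{A}_e(\{F,g\})\cong\theta(g)/(tg(\theta_n)+wg(Lift(F)))$. Your anticipated obstacle --- that the relevant module on the $g_2$ branch is $Lift(\{F,g_1\})$ rather than $Lift(F)$ --- is exactly what the paper builds in by taking $\{F,g_1\}$ as the base of the sequence from the start. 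The subsequent projection to the last two components, the matrix $\left(\begin{smallmatrix}1&0\\1&2z\end{smallmatrix}\right)$, the submodule $T_0$, and the final reduction to $\mathcal{O}_{n-1}$ via the substitution $(X,Y,Z)\mapsto(x,y,z^2+y)$ followed by $z=0$ (i.e.\ evaluation at $(x,y,y)$) are identical to what the paper does.
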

\begin{proof}
The proof is very similar to the one of the cuspidal concatenation.
Consider the following exact sequence

\begin{equation*}
\begin{CD}
0@>>>\!\frac{\theta(g_2)}{tg_2(\theta_{n})+wg_2(Lift(\{F,g_1\}))}\!@>>>
N\mathcal A_e(\{F,g\})\!@>>>\!N\mathcal A_e(\{F,g_1\})\!@>>>\!0
\end{CD}
\end{equation*}
Projecting to the last two components it follows that
$\frac{\theta(g_2)}{tg_2(\theta_{n})+wg_2(Lift(\{F,g_1\}))}$ is
isomorphic to $\frac{\mathcal O_{n}\oplus\mathcal O_{n}}{T}$ where
$$T=\{\left(\begin{array}{cc}1 & 0\cr 1 &
2z\cr\end{array}\right)\left(\begin{array}{c} v_{n-1}\cr
v_{n}\cr\end{array}\right): v_{n-1},v_{n}\in \mathcal O_{n}\}
+d(Y,Z)(wg_2(Lift(\{F,g_1\}))),$$ and $d(Y,Z)$ represents the last
two components of $wg_2(Lift(\{F,g_1\}))$.

Let $T_0=\{\xi: (0,\xi)\in T\}$ which is equal to
$$\{\xi:\xi=-\eta_{n-1}(x,y,z^2+y)+zv_n(x,y,z)+\eta_{n}(x,y,z^2+y)\},$$
where $\eta=(\eta_1,\ldots,\eta_{n})\in Lift(\{F,g_1\}).$ As in
Proposition \ref{cuspconc} we can show that $\frac{\mathcal
O_{n}\oplus\mathcal O_{n}}{T}$ is isomorphic to
$\frac{\mathcal{O}_{n}}{T_0}$ which is in turn isomorphic to
$\frac{\mathcal{O}_{n-1}}{\{\xi:\xi=-\eta_{n-1}(x,y,y)+\eta_{n}(x,y,y)\}}$.
\end{proof}

\begin{rem}
If there exists a finitely determined 1-parameter unfolding $F'$ of
$f$, $F':(\K^{n-1},S\times\{0\})\rightarrow (\K^{n-1},0)$,
$F'(x,y)=(f_y(x),y)$ such that $F(x,y,0)=(F'(x,y),0)$, then the
multigerm $\{F,g_1\}$ is a monic concatenation of $F'$. Therefore,
$\mathcal A_e-\cod(\{F,g_1\})=\mathcal A_e-\cod(F')$ by Theorem
\ref{codconc}.
\end{rem}

\begin{ex}
Consider the trigerm $f(x)=\{x^2,x^2,x^2\}$ and $F'$ and $F$ as
follows:

\begin{equation}
F'=
\begin{cases}
(x^2+y,y)\\
(x^2,y)\\
(x^2-y,y)
\end{cases}
\text{and  } F=
\begin{cases}
(x^2+y+z,y,z)\\
(x^2,y,z)\\
(x^2-y,y,z)
\end{cases}
\end{equation}

The generators of $Lift(\{F,g_1\})$ are
$$\{X\frac{\partial}{\partial X}+Y\frac{\partial}{\partial
Y}+Z\frac{\partial}{\partial Z},$$
$$(2XY+XZ)\frac{\partial}{\partial
X}+(3X^2-Y^2-2XZ-YZ)\frac{\partial}{\partial Y},$$
$$(2X^2-3XZ)\frac{\partial}{\partial
X}+(2XY+4XZ+YZ)\frac{\partial}{\partial
Y}+(Z^2-XZ)\frac{\partial}{\partial Z},$$
$$(2XY+XZ)\frac{\partial}{\partial
X}+(2Y^2+4XZ+5YZ)\frac{\partial}{\partial
Y}+(-3Z^2-6XZ)\frac{\partial}{\partial Z}\}.$$ Calculations show
that $dim_{\mathbb
K}\frac{\mathcal{O}_{p-1}}{\{\xi:\xi=-\eta_{p-1}(x,y,y)+\eta_{p}(x,y,y)\}}=3$.
Since $\mathcal A_e-\cod(\{F,g_1\})=\mathcal A_e-\cod(F')=1$, it
follows that the codimension of the quintuple point $\{F,g\}$ is 4.
\end{ex}

\begin{teo}\label{codndoublefold}
Let $F:(\K^n,S)\rightarrow(\K^n,0)$ be a stable (multi)germ with
$\widetilde\tau(F)=0$ and $g=\{g_1,g_2\}$ where
\begin{equation}
\begin{cases}
g_1(x_1,\ldots,x_{n-2},y,z)=(x_1,\ldots,x_{n-2},y,z^2)\\
g_2(x_1,\ldots,x_{n-2},y,z)=(x_1,\ldots,x_{n-2},y,z^2+y),
\end{cases}
\end{equation}
then $\mathcal{A}_e$-cod$(\{F,g\})\geq n$.
\end{teo}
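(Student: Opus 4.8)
The plan is to mimic the structure of the proof of Theorem \ref{codn} (the cuspidal-concatenation analogue), replacing the single cuspidal branch by the pair $g=\{g_1,g_2\}$ and invoking Theorem \ref{doubleconc} instead of Theorem \ref{cuspconc}. First I would observe that, since $F$ is a stable corank $1$ germ $(\K^n,S)\to(\K^n,0)$ with $\widetilde\tau(F)=0$, it must be of type $A_{k_1}\cdots A_{k_r}$ with $k_1+\cdots+k_r=n$; in particular each branch $A_{k_i}$, considered from $\K^n$ to $\K^n$, is an $(n-k_i)$-prism on the standard $A_{k_i}$ from $\K^{k_i}$ to $\K^{k_i}$, and $Lift(F)$ is the intersection of the $Lift(A_{k_i})$, so it can be arranged in block-diagonal form with blocks exactly as described in Proposition \ref{a2an} (plus the extra fields $\partial/\partial X_j$ for the variables not involved in a given block).

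The main computation is then to bound $\mathcal A_e$-$\cod(\{F,g\})$ from below using the formula of Theorem \ref{doubleconc},
$$\mathcal{A}_e-\cod(\{F,g\})=\mathcal{A}_e-\cod(\{F,g_1\})+\dim_{\mathbb K}\frac{\mathcal{O}_{n-1}}{\{\xi:\xi=-\eta_{n-1}(x,y,y)+\eta_{n}(x,y,y)\}},$$
where $\eta_{n-1},\eta_n$ are the last two components of vector fields in $Lift(\{F,g_1\})$. For the first summand, note that $\{F,g_1\}$ is a monic concatenation of the $1$-parameter stable unfolding $F(x,y,0)=(F'(x,y),0)$ obtained by restricting $F$, so by Theorem \ref{codconc} $\mathcal A_e$-$\cod(\{F,g_1\})=\mathcal A_e$-$\cod(F')$, and $F'$ is itself a stable germ $(\K^{n-1},S)\to(\K^{n-1},0)$ of type $A_{k_1}\cdots A_{k_r}$ with total degeneracy $n-1$ (one less, because one fold direction has been consumed). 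For the second summand I would run the substitution-and-elimination argument of Proposition \ref{a2an}: plug $z=y$ into the last two components of the block-diagonal $Lift(\{F,g_1\})$, extract the linear/low-order relations block by block, and show by the same step-by-step ascent (multiplying the lowest relation by the remaining generators, reducing, and iterating) that $\mathcal M^2\subseteq T_\Sigma$, leaving a residual basis of the appropriate size so that the second summand contributes at least $1$. Adding $\mathcal A_e$-$\cod(F')\geq n-1$ to this gives $\mathcal A_e$-$\cod(\{F,g\})\geq n$.

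The hard part will be controlling the second summand uniformly over all types $A_{k_1}\cdots A_{k_r}$: unlike in Theorem \ref{codn}, where Proposition \ref{a2an} supplies the exact count, here I must show the quotient $\mathcal O_{n-1}/\{\xi=-\eta_{n-1}(x,y,y)+\eta_n(x,y,y)\}$ never collapses below dimension $1$, i.e. that the constant function $1$ is never hit. This should follow from the fact that every generator of $Lift(\{F,g_1\})$ (coming either from the $A_{k_i}$-blocks as in Proposition \ref{a2an} or from the concatenation structure) has last two components vanishing at the origin or differing only in higher order, so that $-\eta_{n-1}(x,y,y)+\eta_n(x,y,y)$ always lies in $\mathcal M_{n-1}$; hence $1\notin T_\Sigma$ and the second summand is $\geq 1$. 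Once that is established the inequality $\mathcal A_e$-$\cod(\{F,g\})\geq n$ is immediate, and—as in Theorem \ref{codn}—equality holds precisely when $g$ is transversal to the limiting tangent spaces of the discriminant strata of $F$ and $F\pitchfork\widetilde\tau(g)$, though only the inequality is asserted in the statement.
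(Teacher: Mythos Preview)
Your overall strategy---invoke Theorem \ref{doubleconc} and bound the two summands separately, using the block-diagonal description of $Lift(F)$ for multigerms---is the same as the paper's. However, the way you split the total $n$ between the two summands is wrong, and this is a genuine gap.

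You claim that $F'$, obtained by restricting $F$ to $z=0$, is ``itself a stable germ $(\K^{n-1},S)\to(\K^{n-1},0)$ of type $A_{k_1}\cdots A_{k_r}$ with total degeneracy $n-1$''. This is false. Restricting to a hyperplane does not lower the multiplicity of any branch: $F'$ is still of type $A_{k_1}\cdots A_{k_r}$ with $k_1+\cdots+k_r=n$, but now in dimension $n-1$, so it \emph{cannot} be stable (stable multigerms in $\K^{n-1}$ require $\sum k_i\le n-1$). In fact, since $\widetilde\tau(F)=\{0\}$, any hyperplane slice of $F$ is non-stable, and for a generic slice one gets $\mathcal A_e$-$\cod(F')=1$, not $n-1$. (Compare Example 4.18 in the paper, where $n=3$ and the first summand is exactly $1$.) So your inequality ``$\mathcal A_e$-$\cod(F')\ge n-1$'' fails, and your argument only yields $\mathcal A_e$-$\cod(\{F,g\})\ge 1+1=2$.

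The paper's proof puts the weight on the \emph{second} summand. The key point you are missing is that $Lift(\{F,g_1\})=Lift(F)\cap Lift(g_1)$, and since $Lift(g_1)=\langle \partial/\partial X_1,\ldots,\partial/\partial X_{n-2},\partial/\partial Y, Z\,\partial/\partial Z\rangle$, every $\eta\in Lift(\{F,g_1\})$ has its last component $\eta_n$ divisible by $Z$. Combined with the explicit linear parts of $Lift(F)$ from Proposition \ref{a2an} (where, apart from the Euler field, the $\partial/\partial Z$-components are $X_1,X_2,\ldots,Y$, none divisible by $Z$), this forces the ideal $\{-\eta_{n-1}(x,y,y)+\eta_n(x,y,y)\}$ to be small enough that the second quotient has dimension at least $n-1$. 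Your lower bound of $1$ for this term (just ``$1\notin T_\Sigma$'') is far too weak. To fix your proof, you need to carry out this computation of $Lift(F)\cap Lift(g_1)$ explicitly and run the elimination argument on the second summand, not the first.
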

\begin{proof}
If $F$ is a monogerm, we use the information about $Lift(F)$ in
Proposition \ref{a2an} and the fact that $Lift(g_1)=\langle
\frac{\partial}{\partial X_1},\ldots,\frac{\partial}{\partial
X_{n-2}},\frac{\partial}{\partial Y},Z\frac{\partial}{\partial
Z}\rangle$ to apply the formula in Theorem \ref{doubleconc}. If $F$
is a multigerm we repeat the argument in the proof of Theorem
\ref{codn}.
\end{proof}

A similar result for the case $n=p-1$ can be found in Remark
\ref{primmultmorse}.

\section{$\mathcal A_e$-codimension 2 multigerms}

The classification of primitive monogerms of $\mathcal
A_e$-codimension 2 for all pairs $(n,p)$ in the nice dimensions and
$p\leq n+1$ is already known. When $n\geq p$, the list of $\mathcal
A$-simple singularities was obtained by Goryunov in \cite{Goryunov}
(see also \cite{riegerruas}). When $p=n+1$, this classification was
recently given by Houston and Wik Atique in \cite{houstonroberta}.

We assume the known fact that when $p=1$, the only codimension 2
multigerms are a trigerm with 3 Morse functions and a bigerm with a
Morse function and an $A_2$ singularity. We also need the list of
codimension 2 multigerms when $p=2$. For $n=1$ we refer to
\cite{kolgushkin} and for $n\geq 2$ a list can be found in
\cite{Ohmoto}.

In this section we prove that all the simple minimal corank
$\mathcal A_e$-codimension 2 multigerms with more than one branch in
Mather's nice dimensions and $n\geq p-1$ can be obtained using the
operations of augmentation, augmentation and concatenation and
generalised concatenations starting from monogerms and some special
multigerms. By a multigerm with $k$ branches we mean a multigerm
with $k$ nonsubmersive branches. Too that all the results in this
section are stated for the complex case.

The section is organised as follows: We start by proving some
general results for any codimension 2 multigerm. Results \ref{aug2}
to \ref{corocod1} deal with augmentations and $\mathcal
A_e$-codimension 2 multigerms that have a branch which is an
$\mathcal A_e$-codimension 1 monogerm. Results \ref{decomp} to
\ref{trans} classify the codimension 2 primitive multigerms where
all the branches are stable. Finally, Theorem \ref{teofinal} is a
summary of all the results in this section.

\begin{prop}\label{cod2}
Let $h=\{h_1,\ldots,h_r\}:(\C^n,S)\rightarrow(\C^p,0)$ be a
multigerm of $\mathcal A_e$-codimension 2. Then, for any proper
subset $S'$ of $S$, the multigerm $h':(\C^n,S')\rightarrow(\C^p,0)$
has $\mathcal A_e$-codimension $\leq 1$. If $\mathcal
A_e-\cod(h')=1$ then $h=\{h',g\}$ where $g$ is prism on a Morse
function (when $n\geq p$) or an immersion (when $p=n+1$).
\end{prop}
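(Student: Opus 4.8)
The plan is to exploit the two exact sequences developed in Section 4 and the additivity of $\mathcal{A}_e$-codimension along branches. First I would recall the general principle (implicit in the concatenation arguments, e.g. the proof of Theorem \ref{augconc}) that for a multigerm $h=\{h',h''\}$ one has a short exact sequence
\begin{equation*}
\begin{CD}
0 @>>> \dfrac{\theta(h'')}{th''(\theta_n)+wh''(\mathrm{Lift}(h'))} @>>> N\mathcal{A}_e(h) @>>> N\mathcal{A}_e(h') @>>> 0,
\end{CD}
\end{equation*}
so that $\mathcal{A}_e\text{-}\cod(h)=\mathcal{A}_e\text{-}\cod(h')+\dim_{\C}\left(\theta(h'')/(th''(\theta_n)+wh''(\mathrm{Lift}(h')))\right)$; in particular $\mathcal{A}_e\text{-}\cod(h)\geq\mathcal{A}_e\text{-}\cod(h')$. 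Applying this to $h'=\{h_i:i\in S'\}$ for any proper subset $S'\subsetneq S$ gives $\mathcal{A}_e\text{-}\cod(h')\leq\mathcal{A}_e\text{-}\cod(h)=2$, and I must rule out the value $2$. If some proper $h'$ had codimension $2$, then (taking $S'$ maximal among proper subsets, or just adding one more branch) the sequence would force the extra term to vanish, i.e. $\theta(h_j)=th_j(\theta_n)+wh_j(\mathrm{Lift}(h'))$ for the remaining branch $h_j$; but $\mathrm{Lift}(h')\subseteq\mathrm{Lift}(h_k)$ for every $k\in S'$ is a much smaller module than $\mathrm{Lift}(h_j)$ unless $h_j$ interacts trivially with $h'$, and a short argument shows the quotient is then at least $1$-dimensional unless $h$ decomposes as a disjoint (hence non-minimal or lower-codimension) configuration — contradicting $\mathcal{A}_e\text{-}\cod(h)=2$ together with minimality. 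So $\mathcal{A}_e\text{-}\cod(h')\leq 1$.

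For the second assertion, suppose $\mathcal{A}_e\text{-}\cod(h')=1$ and write $h=\{h',g\}$ where $g=h_j$ is the one remaining branch. The exact sequence gives $2=1+\dim_{\C}\left(\theta(g)/(tg(\theta_n)+wg(\mathrm{Lift}(h')))\right)$, so this last quotient is $1$-dimensional. Since $h$ is a multigerm of a stable-off-zero map and has finite codimension, $g$ itself must be $\mathcal{A}_e$-stable (any instability in $g$ would already push $\dim\theta(g)/(tg+wg\,\mathrm{Lift}(h'))$, which dominates $\mathcal{A}_e\text{-}\cod(g)$, to be at least the codimension of $g$, forcing $\mathcal{A}_e\text{-}\cod(h)\geq 1+\mathcal{A}_e\text{-}\cod(g)\geq 3$ as soon as $g$ is non-stable with $\mathcal{A}_e\text{-}\cod(g)\geq 2$, and the case $\mathcal{A}_e\text{-}\cod(g)=1$ is handled separately by the list of codimension-$1$ multigerms in \cite{robertamond}). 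A stable germ $g$ of minimal corank in the equidimensional case $n\geq p$ is $\mathcal{A}$-equivalent to $\{A_{k_1}\cdots A_{k_s}\}$-type, and a prism on a Morse function is exactly the stable type for which $\dim\theta(g)/(tg(\theta_n)+wg(M))$ equals $1$ for a generic codimension-$1$ submodule $M\subseteq\mathrm{Lift}(g)$ of liftable fields coming from $h'$ — any more degenerate stable singularity, or transversality of strata, would make the quotient $0$ (stable multigerm, $\mathcal{A}_e\text{-}\cod=0$) or $\geq 2$. This is precisely the computation underlying Theorem \ref{codconc} read backwards. When $p=n+1$, the only stable germs are immersions, and the same dimension count pins $g$ down to an immersion (a prism on the empty Morse function), the quotient being $1$-dimensional exactly when the image of $g$ is tangent, in the relevant $1$-parameter sense, to the discriminant of $h'$.

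The main obstacle is the case analysis needed to exclude $\mathcal{A}_e\text{-}\cod(h')=2$ for a proper subset: one has to combine Corollary \ref{aug} (or Proposition \ref{almreg}) with the exact sequence to show that a codimension-$2$ sub-multigerm together with any further non-trivially-placed branch forces codimension $\geq 3$, and to do this cleanly one should distinguish whether the remaining branch is stable or not, and whether $h$ admits a $1$-parameter stable unfolding (which it does, being a multigerm in the nice dimensions that is stable off $0$ — this is where the hypothesis that $h$ is simple or at least finitely determined enters). The identification of $g$ with "prism on a Morse function / immersion" is then a short normal-form argument once the local algebra quotient is known to be exactly $\C$.
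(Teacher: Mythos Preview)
You have the correct starting point: the exact sequence
\[
0 \longrightarrow \frac{\theta(g)}{tg(\theta_n)+wg(\mathrm{Lift}(h'))} \longrightarrow N\mathcal{A}_e(h) \longrightarrow N\mathcal{A}_e(h') \longrightarrow 0
\]
is exactly what the paper uses. But you are missing the single observation that makes the proof work, and as a result both halves of your argument degenerate into vague case analysis.

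The missing ingredient is this: if $h'$ is non-stable (and finitely determined), then $\widetilde\tau(h')=\{0\}$, i.e.\ every vector field in $\mathrm{Lift}(h')$ vanishes at the origin, so $\mathrm{Lift}(h')\subseteq\mathcal{M}_p\theta_p$. This has two immediate consequences. First, $wg(\mathrm{Lift}(h'))\subseteq g^*\mathcal{M}_p\theta(g)$, so the left-hand quotient surjects onto $\theta(g)/(tg(\theta_n)+g^*\mathcal{M}_p\theta(g))$; evaluating at $0$ shows this cannot be zero unless $g$ is submersive, hence the quotient has dimension $\geq 1$. That is the whole argument for $\mathcal{A}_e\text{-}\cod(h')\leq 1$: there is no need for your ``short argument'' about disjoint configurations, nor for Corollary~\ref{aug} or Proposition~\ref{almreg}, nor for assuming a $1$-parameter stable unfolding.

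Second, when $\mathcal{A}_e\text{-}\cod(h')=1$ the same inclusion gives
\[
1=\dim\frac{\theta(g)}{tg(\theta_n)+wg(\mathrm{Lift}(h'))}\geq \dim\frac{\theta(g)}{tg(\theta_n)+g^*\mathcal{M}_p\theta(g)}=\mathcal{K}_e\text{-}\cod(g).
\]
This bound does all the remaining work at once: it forces $g$ to be a \emph{monogerm} (the simplest bigerm already has $\mathcal{K}_e$-codimension $2$), which you simply assume by writing ``$g=h_j$ is the one remaining branch''; and it forces $g$ to be stable with $\mathcal{K}_e\text{-}\cod(g)\leq 1$, which in these dimensions is precisely a prism on a Morse function or an immersion. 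Your route through ``any instability would push the codimension to $\geq 3$'', the codimension-$1$ list of \cite{robertamond}, and ``Theorem~\ref{codconc} read backwards'' is unnecessary and, as written, does not actually pin down $g$.
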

\begin{proof}
Let $g:(\C^n,S\backslash S')\rightarrow(\C^p,0)$ be the germ such
that $h=\{h',g\}$. If $h'$ is stable, then $\mathcal
A_e-\cod(h')=0$. Otherwise, consider the exact sequence

\begin{equation*}
\begin{CD}
0@>>> \frac{\theta(g)}{tg(\theta_{n})+wg(Lift(h'))} @>>> N\mathcal
A_e(h) @>>> N\mathcal A_e(h') @>>> 0.
\end{CD}
\end{equation*}
Since $\widetilde\tau(h')=0$, then the dimension of
$\frac{\theta(g)}{tg(\theta_{n})+wg(Lift(h'))}$ is at least 1 and
since $\dim_{\mathbb C} N\mathcal A_e(h)=2$, it follows that the
codimension of $h'$ has to be less than or equal to 1.

Now suppose that $\mathcal A_e-\cod(h')=1$, then the dimension of
$\frac{\theta(g)}{tg(\theta_{n})+wg(Lift(h'))}$ would be exactly 1.
Since $\widetilde\tau(h')=\{0\}$, $Lift(h')$ does not have any
constants in any entry and so
$1=\dim\frac{\theta(g)}{tg(\theta_{n})+wg(Lift(h'))}\geq\dim\frac{\theta(g)}{tg(\theta_{n})+g^*(\mathcal
M_p)\theta_p}=\mathcal K_e-cod(g)$. This implies first that $g$ is a
monogerm since the $\mathcal K_e-$codimension of the simplest
possible bigerms (namely two transversal folds or immersions) is 2.
Furthermore, $\dim\frac{\theta(g)}{tg(\theta_{n})+g^*(\mathcal
M_p)\theta_p+wg(\theta_p)}=0$ and so $g$ is stable and therefore it
is a prism on a Morse function or an immersion.
\end{proof}

\begin{coro}
Let $h=\{h_1,\ldots,h_r\}:(\C^n,S)\rightarrow(\C^p,0)$ ($p>1$) be a
multigerm of $\mathcal A_e$-codimension 2. Then
\begin{enumerate}
\item[i)] If $r\geq 3$ then $h_i$ is stable for every $i\in \{1,\ldots,r\}$.

\item[ii)] If $h_i$ is stable for every $i\in \{1,\ldots,r\}$, then $h=\{f,g\}$ where both $f$ and $g$ are stable.
\end{enumerate}
\end{coro}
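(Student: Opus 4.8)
The plan is to derive both statements directly from Proposition \ref{cod2}, which is the main structural tool already at our disposal. For part i), suppose $r\geq 3$ and assume for contradiction that some branch, say $h_1$, is non-stable. Then $h_1$ is a monogerm of $\mathcal A_e$-codimension at least $1$, and by applying Proposition \ref{cod2} to the proper subset $S'=\{x_1\}$ we get that $\mathcal A_e-\cod(h_1)\leq 1$, hence exactly $1$, and moreover that $h=\{h_1,g\}$ where $g$ is stable (a prism on a Morse function or an immersion). But $g$ here is the multigerm on $S\setminus\{x_1\}$, which has $r-1\geq 2$ branches; since Proposition \ref{cod2} forces $g$ to be a monogerm, we reach a contradiction. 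Therefore every branch must be stable when $r\geq 3$.

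For part ii), assume all $h_i$ are stable. If $h$ itself is stable there is nothing to prove (take $f=h_1$, $g=\{h_2,\ldots,h_r\}$, both automatically stable as submultigerms of a stable multigerm with regular intersection of analytic strata). So assume $h$ is non-stable. I would like to exhibit a partition $S=S_1\sqcup S_2$ into two nonempty pieces so that both $h|_{S_1}$ and $h|_{S_2}$ are stable. The natural candidates for the codimension of the two pieces sum to $2$, so by Proposition \ref{cod2} each proper subgerm has codimension $0$ or $1$. If some proper subgerm $h'$ on $S'$ already has codimension $0$ — i.e.\ is stable — and its complement is nonempty and also stable, we are done. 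The content is to rule out the possibility that no such splitting exists; this is the step I expect to be the main obstacle.

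To handle that obstacle, I would argue by examining the codimensions of single-branch deletions. Pick any branch $h_i$ and set $S_i'=S\setminus\{x_i\}$; by Proposition \ref{cod2}, $\mathcal A_e-\cod(h|_{S_i'})\in\{0,1\}$. If it equals $0$ for some $i$, then $f:=h|_{S_i'}$ is stable and $g:=h_i$ is stable by hypothesis, and $h=\{f,g\}$ is the desired decomposition. Otherwise $\mathcal A_e-\cod(h|_{S_i'})=1$ for every $i$. Fix such an $i$; then $h=\{h|_{S_i'},h_i\}$ with $\mathcal A_e-\cod(h|_{S_i'})=1$, and applying the exact sequence argument of Proposition \ref{cod2} (or \ref{cod2} itself, with $h'=h|_{S_i'}$) forces $h_i$, being the complementary germ, to satisfy $\mathcal A_e-\cod(h|_{S_i'})=1$ with the cokernel contribution equal to $1$; since $h|_{S_i'}$ is itself non-stable of codimension $1$ and has more than one branch (as $r\geq 2$ and if $r=2$ then $h|_{S_i'}$ is the single stable branch $h_j$, contradicting codimension $1$), we get $r\geq 3$, but then we can instead delete a branch from inside $S_i'$: by part i) this does not help directly, so the cleanest route is to observe that a non-stable multigerm of codimension $1$ with all branches stable must, by the codimension-$1$ structure theory (Proposition \ref{almreg} and \cite{robertamond}), itself decompose as $\{f',g'\}$ with both stable, and then $h=\{f',\{g',h_i\}\}$ would have a stable subgerm of codimension $0$, contradicting our standing assumption. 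This contradiction shows the remaining case cannot occur, completing the proof.

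\medskip

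\emph{Remark on the write-up.} The argument above is somewhat heavy; in the paper I would streamline it by first noting that for $r\geq 3$ part i) gives all branches stable for free, so part ii) only needs checking for $r=2$, where it is immediate (a bigerm with both branches stable and $\mathcal A_e$-codimension $2$ is already of the form $\{f,g\}$ with $f=h_1$, $g=h_2$ both stable), together with the reduction that any $r\geq 3$ case collapses by repeatedly grouping branches until two stable pieces remain. The key inputs are Proposition \ref{cod2} and the exact sequence it is built on; no new computation is required.
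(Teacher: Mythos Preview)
Your argument for part i) is correct and matches the paper's (the paper simply says ``follows directly from Proposition \ref{cod2}''; you have spelled out the same reasoning).

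Part ii), however, has a genuine gap. In the residual case where every single-branch deletion $h|_{S\setminus\{x_i\}}$ has $\mathcal A_e$-codimension $1$, you invoke the codimension-$1$ theory to write $h|_{S_i'}=\{f',g'\}$ with $f',g'$ stable, and then assert that ``$h=\{f',\{g',h_i\}\}$ would have a stable subgerm of codimension $0$, contradicting our standing assumption.'' But your standing assumption concerns only subgerms obtained by deleting \emph{one} branch; the stability of $f'$ (which has $r-2$ branches) contradicts nothing, and you have not shown that the complementary piece $\{g',h_i\}$ is stable. In fact it need not be: take $r=3$ with three pairwise tangent prisms on Morse functions. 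Then every $2$-branch subgerm has codimension $1$, the decomposition of $h|_{S_3'}$ is $f'=h_1$, $g'=h_2$, and $\{g',h_3\}=\{h_2,h_3\}$ is again a tangent pair of codimension $1$, not stable. Your ``repeated grouping'' remark suffers from the same problem.

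The paper's proof closes exactly this hole by a different route: assuming no stable bipartition exists, it first deduces (via Proposition \ref{cod2}) that every $h_i$ is a prism on a Morse function or an immersion and that \emph{any} two branches form a codimension-$1$ bigerm. For $r\geq 4$ this yields two disjoint codimension-$1$ bigerms inside $h$, contradicting Proposition \ref{cod2}. For $r=3$ it forces a triple tangency of folds/immersions, and a direct computation shows such a trigerm has $\mathcal A_e$-codimension strictly greater than $2$ when $p>1$ (this is where the hypothesis $p>1$ is actually used; see Remark \ref{3morse}). That calculation is the missing ingredient in your argument.
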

\begin{proof}
i) Follows directly from Proposition \ref{cod2}.

ii) From \cite{robertamond} we know that a codimension 1 multigerm
only has stable branches.

Suppose that there does not exist a partition $h=\{f,g\}$ with $f,g$
stable. It follows from Proposition \ref{cod2} that $h_i$ is a prism
on a Morse function or an immersion for all $i=1,\ldots,r$. By
hypothesis, $h'=\{h_{i_1},\ldots,h_{i_{r-1}}\}$ has $\mathcal
A_e$-codimension 1, and by \cite{robertamond} any $r-2$ branches of
$h'$ form a stable multigerm. It follows that any two branches of
$h$ constitute an $\mathcal A_e$-codimension 1 bigerm. Therefore, if
$r\geq 4$ we have a decomposition of $h$ where at least two bigerms
have $\mathcal A_e$-codimension 1, which contradicts Proposition
\ref{cod2}. When $r=3$, in a similar way we can prove that all
branches are prisms on Morse functions or immersions and, further
more, that any two branches form a codimension 1 germ, that is, they
are tangent. This implies a triple tangency. Direct calculations
show that the codimension of such a trigerm is greater than two
except for when $p=1$. The case $r=2$ is trivial.
\end{proof}

\begin{rem}\label{3morse}
When $p=1$, a trigerm of three Morse functions has codimension 2 and
cannot be separated into two stable germs.
\end{rem}

\begin{prop}\label{almreg2}
Let $h=\{f,g\}$ be a multigerm of $\mathcal A_e$-codimension 2. Then
\begin{enumerate}
\item[i)] If $f$ is a monogerm of $\mathcal A_e$-codimension 1, then $\widetilde\tau(f)$ and $\widetilde\tau(g)$ have almost regular intersection.

\item[ii)] Suppose $f$ and $g$ are stable, then $\widetilde\tau(f)$ and $\widetilde\tau(g)$ have almost regular intersection ($\cod\widetilde\tau(f)+\cod\widetilde\tau(g)-\cod(\widetilde\tau(f)\cap\widetilde\tau(g))=1$) if and only if $h$ admits a 1-parameter stable unfolding. They have almost regular intersection of order 2 otherwise ($\cod\widetilde\tau(f)+\cod\widetilde\tau(g)-\cod(\widetilde\tau(f)\cap\widetilde\tau(g))=2$).
\end{enumerate}
\end{prop}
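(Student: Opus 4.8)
My plan rests on a formula for the $\mathcal{A}_e$-codimension of a bigerm $h=\{f,g\}$ with $g$ stable, namely
$$\mathcal{A}_e\text{-cod}(h)=\mathcal{A}_e\text{-cod}(f)+\dim_{\mathbb{K}}\frac{\theta_p}{Lift(f)+Lift(g)}.$$
I would derive it from the exact sequence in the proof of Proposition \ref{cod2}, which already gives $\mathcal{A}_e\text{-cod}(h)=\mathcal{A}_e\text{-cod}(f)+\dim_{\mathbb{K}}\frac{\theta(g)}{tg(\theta_n)+wg(Lift(f))}$: since $g$ is stable one has $tg(\theta_n)+wg(\theta_p)=\theta(g)$, so applying the modular law and then pulling the quotient back along $wg$ (using $wg^{-1}(tg(\theta_n)\cap wg(\theta_p))=Lift(g)$ and $\ker wg\subseteq Lift(g)$) identifies $\frac{\theta(g)}{tg(\theta_n)+wg(Lift(f))}$ with $\frac{\theta_p}{Lift(f)+Lift(g)}$. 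Both parts of the proposition are then read off from this.

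For part i): since $\mathcal{A}_e\text{-cod}(f)=1$ and $\mathcal{A}_e\text{-cod}(h)=2$, Proposition \ref{cod2} forces $g$ to be a prism on a Morse function (if $n\geq p$) or an immersion (if $n=p-1$); in either case $g$ is a complex analytic stable germ whose discriminant $D(g)$ is a smooth hypersurface, so $Lift(g)=Derlog(D(g))$, and in target coordinates $(X_1,\dots,X_{p-1},Z)$ with $D(g)=\{Z=0\}$ one has $Lift(g)=\{\eta\in\theta_p:dZ(\eta)\in(Z)\}$ and $\widetilde\tau(g)=T_0\{Z=0\}$, of codimension $1$. The formula gives $\dim_{\mathbb{K}}\frac{\theta_p}{Lift(f)+Lift(g)}=1$; projecting onto the $Z$-component and reducing modulo $Z$, this quotient becomes $\frac{\mathcal{O}_{p-1}}{dZ(Lift(f))|_{Z=0}}$, so the ideal $dZ(Lift(f))|_{Z=0}$ has colength $1$ in the local ring $\mathcal{O}_{p-1}$ and hence equals its maximal ideal. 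Therefore every $\eta\in Lift(f)$ satisfies $dZ(\eta)(0)=0$, i.e.\ $\widetilde\tau(f)\subseteq T_0\{Z=0\}=\widetilde\tau(g)$, so $\widetilde\tau(f)\cap\widetilde\tau(g)=\widetilde\tau(f)$ and $\cod\widetilde\tau(f)+\cod\widetilde\tau(g)-\cod(\widetilde\tau(f)\cap\widetilde\tau(g))=\cod\widetilde\tau(g)=1$; this is precisely almost regular intersection of the two analytic strata.

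For part ii): the equivalence ``almost regular intersection $\Leftrightarrow$ $h$ admits a $1$-parameter stable unfolding'' is Proposition \ref{almreg} applied to the decomposition $h=\{f,g\}$ into two stable germs, its two implications being the two assertions there (one uses that, $f$ and $g$ being stable, the analytic strata of all branches of $h$ are almost regular exactly when $\widetilde\tau(f)$ and $\widetilde\tau(g)$ are). It remains to show that when $h$ has no $1$-parameter stable unfolding the order of almost regular intersection is $2$. It is at least $1$ by Mather's criterion \cite{mather}: $h$ is non-stable (of $\mathcal{A}_e$-codimension $2$) while all its branches are stable, so its analytic strata are not regular. It is at most $2$ by the formula: with $\mathcal{A}_e\text{-cod}(f)=0$ it reads $\mathcal{A}_e\text{-cod}(h)=\dim_{\mathbb{K}}\frac{\theta_p}{Lift(f)+Lift(g)}$, and evaluation at the origin maps $Lift(f)$ onto $\widetilde\tau(f)$ and $Lift(g)$ onto $\widetilde\tau(g)$, hence induces a surjection onto $\frac{T_0\mathbb{K}^p}{\widetilde\tau(f)+\widetilde\tau(g)}$; by the lemma identifying the order of almost regular intersection with the dimension of the relevant cokernel, that order equals $\dim_{\mathbb{K}}\frac{T_0\mathbb{K}^p}{\widetilde\tau(f)+\widetilde\tau(g)}\leq\mathcal{A}_e\text{-cod}(h)=2$. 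Being neither $0$ nor $1$ (the latter because $h$ has no $1$-parameter stable unfolding), the order is $2$.

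The part I expect to be most delicate is the isomorphism $\frac{\theta(g)}{tg(\theta_n)+wg(Lift(f))}\cong\frac{\theta_p}{Lift(f)+Lift(g)}$ underlying the formula: one must keep track of $\ker wg$, which is nonzero exactly in the immersive range $n=p-1$, and check that the $wg$-preimage of $tg(\theta_n)\cap wg(\theta_p)$ is precisely $Lift(g)$. Once that is in place, part i) is the colength computation for the ideal $dZ(Lift(f))|_{Z=0}$, and part ii) is the evaluation-at-the-origin argument together with Proposition \ref{almreg}.
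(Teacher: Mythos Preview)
Your proof is correct, but the two parts compare very differently with the paper.

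For part i) you have worked much harder than necessary. The paper simply observes that a non-stable germ has $\widetilde\tau(f)=\{0\}$ (this is the fact already used in the proof of Proposition~\ref{cod2}), so $\cod\widetilde\tau(f)=p$ and $\cod(\widetilde\tau(f)\cap\widetilde\tau(g))=p$; since $g$ is a prism on a Morse function or an immersion, $\cod\widetilde\tau(g)=1$, and the almost regular intersection identity $p+1-p=1$ is immediate. Your colength computation for $dZ(Lift(f))|_{Z=0}$ is correct but recovers only the trivial inclusion $\{0\}=\widetilde\tau(f)\subseteq\widetilde\tau(g)$.

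For part ii) your route is genuinely different and arguably cleaner. The paper handles the ``order~$2$'' case by rerunning the commutative-diagram argument of Proposition~\ref{almreg} with a $2$-parameter versal unfolding in place of the $1$-parameter stable one: stability of the $2$-parameter unfolding makes the bottom row surjective, and comparison of dimensions forces the top cokernel to have dimension at most $2$. You instead extract the upper bound directly from your formula $\mathcal A_e\text{-cod}(h)=\dim_{\mathbb K}\theta_p/(Lift(f)+Lift(g))$ together with the evaluation-at-$0$ surjection onto $T_0\mathbb K^p/(\widetilde\tau(f)+\widetilde\tau(g))$, and then combine with Proposition~\ref{almreg} to rule out orders $0$ and $1$. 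The advantage of your approach is that the bound $\cod(\widetilde\tau(f)+\widetilde\tau(g))\leq\mathcal A_e\text{-cod}(h)$ is obtained once and for all, without unfolding; the paper's approach avoids the need to justify the isomorphism $\theta(g)/(tg(\theta_n)+wg(Lift(f)))\cong\theta_p/(Lift(f)+Lift(g))$, which, as you note, is the only delicate step (handling $\ker wg$ in the immersive case). Your treatment of that isomorphism is fine.
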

\begin{proof}
i) If $f$ is of codimension 1, then $\dim\widetilde\tau(f)=0$,
therefore $\cod(\widetilde\tau(f)\cap\widetilde\tau(g))=p$. Since in
this case, by Proposition \ref{cod2}, $g$ is a prism on a Morse
function or an immersion, it follows that $\cod\widetilde\tau(g)=1$
and so $\cod\widetilde\tau(f)+\cod\widetilde\tau(g)=p+1$.

ii) From Proposition \ref{almreg} it follows that $h=\{f,g\}$ admits
a 1-parameter stable unfolding if and only if $\widetilde\tau(f)$
and $\widetilde\tau(g)$ have almost regular intersection.

Repeating the proof of Proposition \ref{almreg} replacing the
1-parameter stable unfolding by a 2-parameter versal unfolding we
obtain that if $h$ does not admit a 1-parameter stable unfolding,
then the analytic strata of $f$ and $g$ have almost regular
intersection of order 2.
\end{proof}

Given $f:(\C^n,0)\rightarrow(\C^p,0)$ with $n\leq p$ let
$m_0(f)=\dim_{\mathbb C}\frac{\mathcal O_n}{f^*(\mathcal M_p)}$ be
the multiplicity of $f$. If $n>p$, and $f$ is simple of corank 1, by
\cite{riegerruas} $f$ can be seen as a germ $f_0$ with $n=p$ plus
quadratic terms in the remaining variables so by multiplicity we
mean $m_0(f_0)$.

In the case $n\geq p$ a stable germ can have multiplicity at most
$p+1$ (\cite{mather}). This means that a germ with multiplicity
$p+2$ cannot be an augmentation, since that would imply that there
is a stable germ in the same dimensions with that same multiplicity.
When $p=n+1$, a stable germ can have multiplicity at most
$[\frac{n}{2}]+1$ (\cite{morin}).

\begin{prop}\label{aug2}
Let $h=\{f,g\}$ be a multigerm of $\mathcal A_e$-codimension 2 which
admits a 1-parameter stable unfolding. Then
\begin{enumerate}
\item[i)] If $f$ and $g$ are stable, then $h$ is an augmentation if and only if $\cod\widetilde\tau(f)+\cod\widetilde\tau(g)\leq p$.

\item[ii)] If $f$ is a monogerm of codimension 1, then $h$ is an augmentation if and only if $m_0(f)\leq p$ in the case $n\geq p$ ($m_0(f)\leq [\frac{n}{2}]$ in the case $p=n+1$).
\end{enumerate}
\end{prop}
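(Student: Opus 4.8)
The plan is to reduce everything to the already-proved Corollary \ref{aug}, which characterises augmentations among multigerms with a $1$-parameter stable unfolding by the inequality $\sum_i \cod\widetilde\tau(f_i)\le p$.

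For part i), since $f$ and $g$ are both stable, Corollary \ref{aug} applies directly to $h=\{f,g\}$ and gives: $h$ is an augmentation if and only if $\cod\widetilde\tau(f)+\cod\widetilde\tau(g)\le p$. So part i) is essentially immediate, and I would just spell out this application.

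For part ii), the subtlety is that $f$ is \emph{not} stable (it has $\mathcal A_e$-codimension $1$), so Corollary \ref{aug} does not apply as stated. First I would use Proposition \ref{cod2}: since $h$ has codimension $2$ and $f$ has codimension $1$, the branch $g$ is a prism on a Morse function (case $n\ge p$) or an immersion (case $p=n+1$); in either case $g$ is stable with $\cod\widetilde\tau(g)=1$, and in fact $\dim\widetilde\tau(f)=0$ so $\cod\widetilde\tau(f)=p$. Now I would invoke Theorem \ref{aughous}: $h$ is an augmentation if and only if $\dim_{\mathbb K}\widetilde\tau(H)\ge 1$ for a $1$-parameter stable unfolding $H$ of $h$. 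The key is to relate this to the multiplicity $m_0(f)$. If $h$ is an augmentation, say $h\sim_{\mathcal A}A_{H',\phi}(h_0)$, then there is a stable germ in dimension $(n,p)$ with the same multiplicity as the branch of $h_0$ that augments to $f$ — but augmenting does not change the multiplicity of a branch, so $m_0(f)$ equals the multiplicity of a branch of a stable germ, hence $m_0(f)\le p+1$ (case $n\ge p$, by Mather) — wait, this only gives $\le p+1$, not $\le p$. The extra unit comes from the second branch: because $h=\{f,g\}$ with $g$ already a nonsubmersive branch, the multigerm $h_0$ that augments to $h$ must also have (at least) two branches, one of them carrying the multiplicity $m_0(f)$; for $h_0$ to be a legitimate primitive source, its branches together with the regular-intersection condition in Mather's characterisation force that multiplicity branch to come from a \emph{multigerm}, not a stable monogerm of maximal multiplicity $p+1$. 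More concretely, if $m_0(f)=p+1$ then the branch $f$ is already a stable monogerm of top multiplicity with $\widetilde\tau=\{0\}$; no such branch can occur in $h_0$ without $g$'s analytic stratum failing to intersect it regularly, contradicting stability of the unfolding $H$ of the augmentation. Conversely, if $m_0(f)\le p$, I would exhibit the augmentation structure explicitly: take the stable monogerm $\tilde f$ of multiplicity $m_0(f)\le p$ that unfolds $f$, observe that it has $\dim\widetilde\tau(\tilde f)\ge 1$ (since its multiplicity is below the maximum, there is room for a nonzero liftable vector field vanishing nowhere), and then $\{ \tilde f, g\}$ with a suitable versal structure provides a $1$-parameter stable unfolding of $h$ with $\dim\widetilde\tau\ge 1$; Theorem \ref{aughous} then yields that $h$ is an augmentation. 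The case $p=n+1$ is identical with the Mather bound $p+1$ replaced by Morin's bound $[\tfrac n2]+1$.

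The main obstacle will be the backward direction of part ii): carefully arguing that $m_0(f)\le p$ is \emph{sufficient} for $h$ to be an augmentation — i.e. producing the primitive germ $h_0$ and the $1$-parameter stable unfolding $H$ of $\{f,g\}$ with $\dim_{\mathbb K}\widetilde\tau(H)\ge 1$. This requires knowing that a corank $1$ simple germ of multiplicity $\le p$ admits a stabilisation whose analytic stratum is positive-dimensional and can be made transverse to the (immersive or fold) branch $g$; I would lean on the explicit normal forms from \cite{riegerruas} and \cite{houstonroberta} together with the transversality argument already used in the converse part of Proposition \ref{almreg}. The forward direction is comparatively routine once one observes that augmentation preserves the multiplicity of each branch and uses the Mather/Morin bounds to bound the multiplicity of a stable branch.
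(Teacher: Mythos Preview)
Your treatment of part i) matches the paper's exactly: both simply invoke Corollary~\ref{aug}. For part~ii) your overall strategy---reduce to Theorem~\ref{aughous} by computing $\dim\widetilde\tau(H)$ for a $1$-parameter stable unfolding $H=\{F,G\}$ of $h$---is sound and in fact more direct than the paper's route, but the execution contains real gaps.

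In the forward direction you write ``if $m_0(f)=p+1$ then the branch $f$ is already a stable monogerm of top multiplicity with $\widetilde\tau=\{0\}$''. This is false: $f$ has $\mathcal A_e$-codimension~$1$ and is therefore \emph{not} stable. You are conflating $f$ with its stable unfolding $F$, and the subsequent sentence about ``no such branch can occur in $h_0$'' does not recover the argument. In the backward direction the idea is right but you are off by one: you claim it suffices that the stable unfolding $\tilde f=F$ have $\dim\widetilde\tau(F)\ge 1$, but since $\widetilde\tau(H)=\widetilde\tau(F)\cap\widetilde\tau(G)$ and $\cod\widetilde\tau(G)=1$, stability of $H$ forces $\dim\widetilde\tau(H)=\dim\widetilde\tau(F)-1$, so you actually need $\dim\widetilde\tau(F)\ge 2$; this is precisely what $m_0(f)\le p$ (rather than $\le p+1$) buys you, via $\dim\widetilde\tau(F)=p+2-m_0(f)$ for corank~$1$ stable germs in $(n+1,p+1)$. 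Also, you speak of constructing a $1$-parameter stable unfolding, but one is given by hypothesis; you should simply compute $\dim\widetilde\tau$ of that one.

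For comparison, the paper argues quite differently. In the forward direction it descends: assuming $h$ is an augmentation of some $h_0=\{f_0,g_0\}$ in $(\mathbb C^{n-1},\mathbb C^{p-1})$, it shows $f_0$ has $\mathcal A_e$-codimension~$1$ and multiplicity $\ge (p-1)+2$, hence $f_0$ is \emph{primitive}; then any $1$-parameter stable unfolding $F_0$ of $f_0$ has $\widetilde\tau(F_0)=0$, which obstructs transversality with the analytic stratum of any unfolding of $g_0$, contradicting the existence of a stable $1$-parameter unfolding of $h_0$. In the backward direction the paper does not compute $\dim\widetilde\tau(H)$ at all: it constructs a \emph{different} fold/immersion branch $g_0\times\mathrm{id}$ for which $\{f,g_0\times\mathrm{id}\}$ is visibly an augmentation, and then invokes \emph{simplicity} to conclude $\{f,g_0\times\mathrm{id}\}\sim_{\mathcal A}h$. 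Your (corrected) approach via $\dim\widetilde\tau(H)$ would avoid that appeal to simplicity, which is a genuine advantage; but as written, your forward argument does not go through.
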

\begin{proof}
i) See Corollary \ref{aug}.

ii) First suppose that $n\geq p$, $h$ is an augmentation and $f$ is
an augmentation of multiplicity greater than or equal to $p+1$. This
means that there exist a map
$h_0=\{f_0,g_0\}:(\C^{n-1},S)\rightarrow (\C^{p-1},0)$, admitting a
1-parameter stable unfolding, and that $f_0$ satisfies
$m_0(f_0)=m_0(f)\geq(p-1)+2$. Then $\mathcal A_e-\cod(f_0)=1$ and
therefore $g_0$ is a prism on a Morse function or an immersion. Then
we get a contradiction, as any 1-parameter stable unfolding $F$ of
$f_0$ has $\widetilde\tau(F)=0$ (for $f_0$ is primitive), and
therefore $\widetilde\tau(F)$ cannot be transversal to the analytic
stratum of any 1-parameter unfolding of $g_0$. It follows that
$m_0(f)\leq p$.

Now suppose that $f$ is an augmentation of multiplicity $k\leq p$ of
an $f_0$. Then $f_0$ admits a 1-parameter stable unfolding $F_0$
with $\dim\widetilde\tau(F_0)\geq 1$. We can choose a prism on a
Morse function or an immersion $g_0:(\C^{n-1},0)\rightarrow
(\C^{p-1},0)$ such that $\widetilde\tau(g_0\times id_t)$ (which has
dimension $p-1$) is transversal to $\widetilde\tau(F_0)$. The bigerm
$\{f,g_0\times id_t\}$ is an augmentation of $h_0=\{f_0,g_0\}$, and
since it has codimension 2 and is simple, it is $\mathcal
A$-equivalent to $h$, which is therefore an augmentation.

The same proof is valid for the case $p=n+1$ but with $m_0(f)\leq
[\frac{n}{2}]$ instead of $m_0(f)\leq p$.
\end{proof}

\begin{rem}
In the setting of Proposition \ref{aug2}, part ii), if $m_0(f)=p+1$
(resp. $m_0(f)=[\frac{n}{2}]+1$), then $h$ is clearly an
augmentation and concatenation in the case $n\geq p$ (resp. in the
case $p=n+1$).
\end{rem}

\begin{lem}\label{primngeqp}
If $f$ is a $\mathcal A_e$-codimension 1 primitive monogerm and
$n\geq p$, then, besides the Euler vector field, the components of
vector fields in $Lift(f)$ are in $\mathcal M_p^2$.
\end{lem}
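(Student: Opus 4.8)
The plan is to realise $f$ as a section of the discriminant of its $1$-parameter stable unfolding, and then to read off the statement from Damon's theorem, the freeness of that discriminant, and the codimension hypothesis. Since $\mathcal A_e-\cod(f)=1$, the germ $f$ has a $1$-parameter stable unfolding $F:(\C^{n+1},S\times\{0\})\to(\C^{p+1},0)$ and $f=i^*F$ for the natural inclusion $i:(\C^p,0)\to(\C^{p+1},0)$, $i(X_1,\dots,X_p)=(X_1,\dots,X_p,0)$. As $f$ is primitive, Theorem \ref{aughous} gives $\widetilde\tau(F)=0$, so $Lift(F)\subseteq\mathcal M_{p+1}\theta_{p+1}$; and since over $\C$ we have $Lift(F)=Derlog(D(F))$ with $D(F)$ a free divisor ($F$ being stable), $Derlog(D(F))$ is free of rank $p+1$ with free generators $\delta_0,\dots,\delta_p$, all vanishing at $0$. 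Below, $(\zeta)_{p+1}$ is the $(p+1)$-st component of a field $\zeta$ on $\C^{p+1}$, a bar denotes reduction modulo $(X_{p+1})$, and $\ell(\zeta)$ is the linear part of a field $\zeta$ with $\zeta(0)=0$.

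Next I would feed in the codimension. By Damon's theorem \cite{damon}, $\mathcal A_e-\cod(f)=\dim_\C N\mathcal K_{D(F),e}(i)$, and projection to the last coordinate, whose kernel on $\theta(i)$ is $ti(\theta_p)$, identifies $N\mathcal K_{D(F),e}(i)$ with $\mathcal O_p/J$, where $J=(\bar d_0,\dots,\bar d_p)$ and $d_i=(\delta_i)_{p+1}$. Hence $\dim_\C\mathcal O_p/J=1$, i.e. $J=\mathcal M_p$. As $\bar d_0,\dots,\bar d_p$ generate $\mathcal M_p$, their linear parts span $\C^p$, so after reindexing $\bar d_1,\dots,\bar d_p$ already have independent linear parts; then $\bar d_1,\dots,\bar d_p$ is a regular system of parameters, in particular it generates $\mathcal M_p$, and $\bar d_0\in\mathcal M_p=(\bar d_1,\dots,\bar d_p)$. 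Lifting a relation $\bar d_0=\sum_{s\ge1}c_s\bar d_s$ and replacing $\delta_0$ by $\delta_0-\sum_{s\ge1}\tilde c_s\delta_s$ — a unimodular change of the free basis that does not disturb $\delta_j(0)=0$ — I may assume in addition $\bar d_0=0$, i.e. $\delta_0$ is tangent to $i(\C^p)=\{X_{p+1}=0\}$.

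Then I would conclude. By the refined form of Damon's theorem describing the liftable fields of a transverse pull-back, every $\eta\in Lift(f)$ is $\zeta|_{\{X_{p+1}=0\}}$ for some $\zeta=\sum_{i=0}^p g_i\delta_i\in Derlog(D(F))$ tangent to $\{X_{p+1}=0\}$; tangency means $(\zeta)_{p+1}\in(X_{p+1})$, i.e. $\sum_{s=1}^p\bar g_s\bar d_s=0$ in $\mathcal O_p$ (using $\bar d_0=0$). Because $\bar d_1,\dots,\bar d_p$ is a regular system of parameters, any such syzygy has $\bar g_s(0)=0$ for all $s$ — if $\bar g_u(0)\ne0$ then $\bar d_u\in(\bar d_s:s\ne u)$ and $\mathcal M_p$ would be $(p-1)$-generated. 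Hence $g_s(0)=0$ for $s\ge1$, so $\zeta(0)=0$ and, all $\delta_i$ vanishing at $0$, $\ell(\zeta)=g_0(0)\,\ell(\delta_0)$; restricting, $\eta(0)=0$ and $\ell(\eta)=g_0(0)\,\ell\big(\delta_0|_{\{X_{p+1}=0\}}\big)$. Thus $\widetilde\tau(f)=0$, and every element of $Lift(f)$ has linear part proportional to that of $v:=\delta_0|_{\{X_{p+1}=0\}}\in Lift(f)$. Finally, $f$ is quasi-homogeneous (by the classifications \cite{Goryunov}, \cite{riegerruas}), so its Euler vector field $E$ lies in $Lift(f)$ and $\ell(E)\ne0$; therefore $\ell(v)$ is a nonzero multiple of $\ell(E)$, and so every $\eta\in Lift(f)$ can be written $\eta=cE+w$ with $c\in\C$ and $w\in\mathcal M_p^2\theta_p$ — which is the assertion.

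The step needing the most care is the form of Damon's theorem invoked twice here: that $f=i^*F$ for the standard inclusion, and that $Lift(f)=\{\zeta|_{i(\C^p)}:\zeta\in Derlog(D(F))\text{ tangent to }i(\C^p)\}$ (equivalently $\{\eta\in\theta_p:ti(\eta)\in i^*(Derlog(D(F)))\}$), together with the freeness of $D(F)$ for stable $F$. The remaining ingredients are elementary (Nakayama, regular systems of parameters); the classification enters only through the quasi-homogeneity of $f$, used solely to identify $v$ with the Euler vector field.
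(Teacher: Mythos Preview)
Your argument is correct and takes a genuinely different route from the paper's proof. The paper works with the explicit normal form $f(x_1,\dots,x_n)=(x_1^{n+2}+x_2x_1+\dots+x_nx_1^{n-1},x_2,\dots,x_n)$ (plus quadratic terms when $n>p$), writes the discriminant of $f$ as the slice $\{X_{n+1}=0\}$ of $D(F)$, and then reads off the conclusion from the explicit list of linear parts of the generators of $Lift(F)$ given in Proposition~\ref{a2an} (ultimately from \cite{bruce}): among those generators, only the Euler field has its $X_{n+1}$-component already in $(X_{n+1})$, so after intersecting with $Lift(g)=\langle\partial_{X_1},\dots,\partial_{X_n},X_{n+1}\partial_{X_{n+1}}\rangle$ and restricting, only the Euler field survives at the linear level.

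Your approach bypasses the explicit normal forms and the computation of $Lift(A_n)$ entirely. You use instead that $D(F)$ is a free divisor with $\widetilde\tau(F)=0$, that the codimension-$1$ hypothesis forces the last components $\bar d_i$ of a free basis to generate $\mathcal M_p$, and then a clean syzygy argument on the regular system of parameters $\bar d_1,\dots,\bar d_p$ to show that any tangent combination has $g_s(0)=0$ for $s\ge1$. The classification enters only at the very end, and only through the quasi-homogeneity of $f$. This is more conceptual and would adapt more readily to other settings where one knows freeness and codimension but not explicit liftable fields; the paper's approach, by contrast, gives concrete generators one can compute with.

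Both proofs rest on the same foundational fact, which you rightly flag as the delicate point: that $Lift(f)$ coincides with the restriction to $\{X_{p+1}=0\}$ of those fields in $Derlog(D(F))$ tangent to $\{X_{p+1}=0\}$. The paper asserts this in the form ``$Lift(f)$ is the projection to the first $n$ components of $Lift(F)\cap Lift(g)$, substituting $X_{n+1}$ by $0$'', without further justification, so your reliance on it is no more than the paper's own.
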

\begin{proof}
Let $F$ be a mini-versal unfolding of $f$. The discriminant of $f$
is a section of the discriminant of $F$. The $Lift(f)$ is obtained
from $Lift(F)\cap Lift(g)$ where $g$ is a fold whose discriminant
gives the section, in the following way.

Consider first the equidimensional case. Then $f:(\C^n,0)\rightarrow
(\C^n,0)$ has a normal form
$f(x_1,\ldots,x_n)=(x_1^{n+2}+x_2x_1+\ldots+x_nx_1^{n-1},x_2,\ldots,x_n)$.
The section $x_{n+1}=0$ of the discriminant of $F$ is the
discriminant of $f$. The $Lift(f)$ is therefore the projection to
the first $n$ components of $Lift(F)\cap Lift(g)$ and substituting
$X_{n+1}$ by 0, where
$g(x_1,\ldots,x_{n+1})=(x_1,\ldots,x_n,x_{n+1}^2)$ and
$(X_1,\ldots,X_{n+1})$ are the target coordinates. Since
$Lift(g)=\langle\frac{\partial}{\partial
X_1},\ldots,\frac{\partial}{\partial
X_n},X_{n+1}\frac{\partial}{\partial X_{n+1}}\rangle$, and the
linear part of $Lift(F)$ is as given in Proposition \ref{a2an}, it
can be seen that the only vector field with linear terms is the
Euler vector field.

For the cases from $\C^{n+k}$ to $\C^n$, the normal forms for
primitive codimension 1 germs are $(x_1,\ldots,x_{n+k})\mapsto
(x_1^{n+2}+x_2x_1+\ldots+x_nx_1^{n-1}+\sum_{i=n+1}^{i=k}x_i^2,x_2,\ldots,x_n)$
and similar arguments prove the same result.
\end{proof}

\begin{lem}\label{liftprim}
If $f$ is a $\mathcal A_e$-codimension 1 primitive monogerm and
$n=p-1$, then there are, including the Euler vector field,
$\frac{p}{2}+1$ linearly independent vector fields in $Lift(f)$ with
linear terms in some component.
\end{lem}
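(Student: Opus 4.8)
The plan is to mimic the proof of Lemma~\ref{primngeqp}: realise $f$ as a hyperplane section of a stable germ and read $Lift(f)$ off from that germ's discriminant. Since $\mathcal A_e$-cod$(f)=1$, a mini-versal unfolding of $f$ has a single parameter and so is a stable germ $F\colon(\C^{p},0)\to(\C^{p+1},0)$, with $f=i^{*}(F)$ for the canonical inclusion $i\colon(\C^{p},0)\to(\C^{p+1},0)$ into the target, so that $D(f)=D(F)\cap\{X_{p+1}=0\}$. Because $f$ is primitive, Theorem~\ref{aughous} gives $\dim_{\C}\widetilde\tau(F)=0$; the only corank $1$ stable germ in dimensions $(p,p+1)$ with trivial analytic stratum is, up to $\mathcal A$-equivalence, the one of maximal multiplicity (those of smaller multiplicity are prisms on germs in lower dimensions and hence have $\widetilde\tau\neq 0$), and such a germ has trivial analytic stratum precisely when $p$ is even, its multiplicity then being $\tfrac p2+1$. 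I would then fix the quasi-homogeneous normal form of $F$ together with the weights of the target coordinates.

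Next I would compute the low-order part of $Lift(F)=Derlog(D(F))$, following \cite{bruce} exactly as in the proof of Proposition~\ref{a2an}. Since $D(F)$ is weighted-homogeneous, $Derlog(D(F))$ contains a supply of weighted-Euler vector fields, all with linear terms, the ordinary Euler field being one of them; the remaining low-order contributions come from the vector fields of the form $h_{X_i}\partial_{X_j}-h_{X_j}\partial_{X_i}$ tangent to $D(F)$ (where $h$ is the reduced equation of $D(F)$), whose linear parts are read off from the quadratic part of $h$. As in Lemma~\ref{primngeqp}, $Lift(f)$ is obtained by intersecting $Lift(F)$ with $Lift(g)=\langle\partial_{X_1},\dots,\partial_{X_p},X_{p+1}\partial_{X_{p+1}}\rangle$, projecting onto the first $p$ components and then setting $X_{p+1}=0$. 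Working through the normal form, one checks that exactly $\tfrac p2+1$ of the vector fields above survive this operation with a non-zero linear term — equivalently, are tangent to the section hyperplane and keep a linear term after restriction — and that the Euler field of $f$ is among them. (Alternatively, one could use the explicit normal form of $f$ coming from the classification in \cite{houstonroberta} and compute $Lift(f)$ directly, as is done in Proposition~\ref{a2an}.)

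The main obstacle is precisely this computation. Identifying and writing down the maximal-multiplicity stable germ $F$ in $(p,p+1)$ and its discriminant is considerably more involved than the $A_n$-discriminant used in Proposition~\ref{a2an}, and once $Derlog(D(F))$ is in hand the remaining work is the bookkeeping needed to see that the hyperplane section producing $f$ is compatible with exactly $\tfrac p2+1$ of the linearly independent vector fields with linear terms in $Lift(F)$, Euler included.
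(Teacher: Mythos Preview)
Your strategy is the paper's: write $f$ as the section $\{V_{k-1}=0\}$ of its one-parameter stable unfolding $F$ (with $p=2k-2$, so $\tfrac p2+1=k$), determine the linear parts of $Lift(F)$, and count which of them survive the restriction. The paper carries out the middle step not through the equation of $D(F)$ but by solving the liftability equation $dF\circ\xi=\eta\circ F$ modulo $F^{*}(\mathcal M_{2k-1}^{2})$, following \cite{nishilift} and \cite{houstonlittle}; this produces an explicit list of $3k-2$ generators for the space of linear parts, of which exactly $k$ (the first, the fourth, the $(k-3)$-member family in the fifth entry, and the sixth) are tangent to $\{V_{k-1}=0\}$ and remain non-zero after setting $V_{k-1}=0$.

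One caution about the route you sketch. There is only a single weighted Euler field, and the Hamiltonian fields $h_{X_i}\partial_{X_j}-h_{X_j}\partial_{X_i}$ are merely the Koszul syzygies on $\nabla h$; for a free divisor such as $D(F)$ they do \emph{not} generate $Derlog_0(D(F))$, and the low-weight basis elements of $Derlog(D(F))$ are typically not of Hamiltonian type. Building the linear part of $Lift(F)$ from ``Euler plus Hamiltonians read off the quadratic part of $h$'' therefore risks missing contributions and getting the wrong count. Your alternative suggestion --- computing $Lift$ directly from the normal form, as in Proposition~\ref{a2an} --- is essentially what the paper does (via the liftability equation rather than via $h$) and sidesteps the need for the image equation entirely.
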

\begin{proof}
First of all, $n$ is odd since there is no codimension 1 primitive
germs when $n$ is even (\cite{robertamond}). The normal form for
$f:(\C^{2k-3},0)\rightarrow (\C^{2k-2},0)$ is
$$f(u_1,\ldots,u_{k-2},v_1,\ldots,v_{k-2},x)=(\underline{u},\underline{v},x^{k}+\sum_{i=1}^{k-2}u_ix^i,x^{k+1}+\sum_{i=1}^{k-2}v_ix^i).$$
Consider the 1 parameter versal unfolding of $f$ as
$$F(u_1,\ldots,u_{k-2},v_1,\ldots,v_{k-1},x)=(\underline{u},\underline{v},x^{k}+\sum_{i=1}^{k-2}u_ix^i,x^{k+1}+\sum_{i=1}^{k-1}v_ix^i),$$
and consider the variables in the target as
$(U_1,\ldots,U_{k-2},V_1,\ldots,V_{k-1},W_1,W_2)$. We proceed as in
\cite{nishilift} to obtain the linear part of vector fields in
$Lift(F)$, that is, we study the equation $dF\circ\xi=\eta\circ F$
modulo $F^*(\mathcal M_{2k-1}^2)$. Such vector fields for a slightly
different normal form for $F$ have been studied in
\cite{houstonlittle}. We obtain that the linear part of vector
fields in $Lift(F)$ are generated by the following $3k-2$ elements:
$$\{\sum_{i=1}^{k-2}(k-i)U_i\frac{\partial}{\partial
U_i}+\sum_{i=1}^{k-1}(k-i+1)V_i\frac{\partial}{\partial
V_i}+kW_1\frac{\partial}{\partial
W_1}+(k+1)W_2\frac{\partial}{\partial W_2},$$
$$\sum_{i=1}^{k-2}(V_i-\frac{i-1}{k}U_{i-1})\frac{\partial}{\partial
U_i}+\sum_{i=1}^{k-2}\frac{k-i+1}{k}V_i\frac{\partial}{\partial
V_{i+1}}-\frac{k+1}{k}W_2\frac{\partial}{\partial
V_1}+W_2\frac{\partial}{\partial W_1},$$
$$\sum_{i=1}^{k-1}(V_i-U_{i-1})\frac{\partial}{\partial
V_i}+W_1\frac{\partial}{\partial V_1},
\sum_{i=1}^{k-2}(V_i-U_{i-1})\frac{\partial}{\partial
U_i}+W_1\frac{\partial}{\partial U_1}+W_2\frac{\partial}{\partial
W_1},$$ $$\sum_{i=1}^{k-j-1}(V_i-U_{i-1})\frac{\partial}{\partial
U_{i+j-1}}+W_2\frac{\partial}{\partial
U_{j-1}}+W_1\frac{\partial}{\partial U_j}, 2\leq j\leq k-2,
W_2\frac{\partial}{\partial U_{k-2}},$$
$$\sum_{i=1}^{k-j-2}((\frac{i-2}{k}U_{i-2}-V_i)\frac{\partial}{\partial
U_{i+j}}+(\frac{i}{k}V_i-\frac{k+1}{k}U_{i-1})\frac{\partial}{\partial
V_{i+j+1}})+W_2\frac{\partial}{\partial
U_{j}}+\frac{k+1}{k}W_1\frac{\partial}{\partial V_{j+2}},$$ $$1\leq
j \leq k-3, W_2\frac{\partial}{\partial V_{k-1}},
\sum_{i=1}^{k-j-1}(U_{i-1}-V_i)\frac{\partial}{\partial
V_{i+j}}+W_2\frac{\partial}{\partial
V_{j}}-W_1\frac{\partial}{\partial V_{j+1}},1\leq j\leq k-2\},$$
where $U_0=U_{-1}=0$.

The section $V_{k-1}=0$ of the discriminant of $F$ is the
discriminant of $f$. We proceed as in the above Lemma taking the
corresponding $2k-2$ components of vector fields in $Lift(F)$ which
are tangent to $V_{k-1}=0$ and substitute $V_{k-1}$ by 0. The linear
part of the generators of vector fields in $Lift(f)$ are the first,
the fourth, the fifth (which comprises $k-3$ vector fields) and the
sixth vector fields in the list above, which means that there are
exactly $k$ linearly independent vector fields with linear part in
$Lift(f)$.
\end{proof}

\begin{prop}\label{codprimmorse}
If $h=\{f,g\}$ is a multigerm with $f$ a primitive monogerm of
$\mathcal A_e$-codimension 1 and $g$ a prism on a Morse function or
an immersion, then $h$ has codimension greater than or equal to $p$
when $n\geq p$ and greater than or equal to $\frac{p}{2}$ when
$n=p-1$.
\end{prop}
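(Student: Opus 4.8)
The plan is to run the same short exact sequence that underlies Proposition \ref{cod2} (it is the sequence appearing in the proofs of Theorems \ref{augconc} and \ref{cuspconc}, with $g$ playing the role of the concatenating branch):
$$0\longrightarrow \frac{\theta(g)}{tg(\theta_{n})+wg(Lift(f))}\longrightarrow N\mathcal{A}_e(\{f,g\})\longrightarrow N\mathcal{A}_e(f)\longrightarrow 0 .$$
Since $\mathcal{A}_e\text{-}\cod(f)=1$ this gives $\mathcal{A}_e\text{-}\cod(h)=1+\dim_{\mathbb{C}}\frac{\theta(g)}{tg(\theta_{n})+wg(Lift(f))}$, so the whole problem reduces to bounding the left-hand term from below, using the information on $Lift(f)$ supplied by Lemmas \ref{primngeqp} and \ref{liftprim}.

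First I would normalise $g$. Being stable of the prescribed type, it is $\mathcal{A}$-equivalent to $g(x)=(x_1,\dots,x_{p-1},q(x))$ with $q=\sum_{j=p}^{n}x_j^{2}$ when $n\ge p$ and $q\equiv 0$ when $n=p-1$; this coordinate change does not affect the $Derlog$-data of $f$ recorded in the two lemmas. Then, projecting $\theta(g)\cong\mathcal{O}_n^{p}$ onto its last component exactly as in the proofs of Theorems \ref{augconc} and \ref{cuspconc}: $tg(\theta_{n})$ covers the first $p-1$ components entirely and covers the ideal $\langle \partial q/\partial x_p,\dots,\partial q/\partial x_n\rangle$ in the last one, whence
$$\frac{\theta(g)}{tg(\theta_{n})+wg(Lift(f))}\ \cong\ \frac{\mathcal{O}_{p-1}}{J},\qquad J=\bigl\{\,\eta_p(X_1,\dots,X_{p-1},0)\ :\ \eta\in Lift(f)\,\bigr\},$$
where $\eta_p$ denotes the last component of a liftable vector field and $J$ is an ideal of $\mathcal{O}_{p-1}=\mathbb{C}\{X_1,\dots,X_{p-1}\}$.

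For $n\ge p$ I would invoke Lemma \ref{primngeqp}: apart from the weighted Euler field $\sum_i d_iX_i\,\partial/\partial X_i$, every generator of $Lift(f)$ has all components in $\mathcal{M}_p^{2}$. The Euler field contributes $d_pX_p|_{X_p=0}=0$ to $J$, and each other generator contributes an element of $\mathcal{M}_{p-1}^{2}$; hence $J\subseteq\mathcal{M}_{p-1}^{2}$, so $\dim_{\mathbb{C}}\mathcal{O}_{p-1}/J\ge\dim_{\mathbb{C}}\mathcal{O}_{p-1}/\mathcal{M}_{p-1}^{2}=p$ and $\mathcal{A}_e\text{-}\cod(h)\ge p+1\ge p$. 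For $n=p-1$ (where $p$ is even, as there are no codimension $1$ primitive monogerms with $n$ even) I would use Lemma \ref{liftprim}: at most $\frac{p}{2}+1$ generators of $Lift(f)$ carry a linear part, hence at most $\frac{p}{2}+1$ generators of $J$ do, so the image of $J$ in $\mathcal{M}_{p-1}/\mathcal{M}_{p-1}^{2}$ has dimension at most $\frac{p}{2}+1$ and
$$\dim_{\mathbb{C}}\frac{\mathcal{O}_{p-1}}{J}\ \ge\ \dim_{\mathbb{C}}\frac{\mathcal{O}_{p-1}}{J+\mathcal{M}_{p-1}^{2}}\ =\ p-\dim_{\mathbb{C}}\frac{J+\mathcal{M}_{p-1}^{2}}{\mathcal{M}_{p-1}^{2}}\ \ge\ p-\Bigl(\frac{p}{2}+1\Bigr)=\frac{p}{2}-1,$$
giving $\mathcal{A}_e\text{-}\cod(h)=1+\dim_{\mathbb{C}}\mathcal{O}_{p-1}/J\ge\frac{p}{2}$.

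The routine but delicate point is the identification of $\theta(g)/(tg(\theta_{n})+wg(Lift(f)))$ with $\mathcal{O}_{p-1}/J$ — namely that, after killing $tg(\theta_{n})$, substituting the folded target coordinate by $0$ is legitimate, that $J$ is genuinely an $\mathcal{O}_{p-1}$-ideal, and that normalising $g$ leaves the $Derlog$-invariants of $f$ intact — together with matching the index conventions of Lemmas \ref{primngeqp} and \ref{liftprim} (which component is ``the last one'', which generator is the Euler field) to the folded or suppressed coordinate of $g$. The real content, however, lives entirely in those two lemmas: once one knows how many generators of $Lift(f)$ have linear terms, the estimate on $\dim_{\mathbb C}\mathcal{O}_{p-1}/J$, and hence on $\mathcal{A}_e\text{-}\cod(h)$, is immediate.
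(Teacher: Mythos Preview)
Your approach is the paper's: use the exact sequence to reduce to bounding $\dim_{\mathbb C}\theta(g)/(tg(\theta_n)+wg(Lift(f)))$, project onto the last component(s) to land in a quotient of $\mathcal{O}_{p-1}$, and then read off the bound from Lemmas~\ref{primngeqp} and~\ref{liftprim}. The paper does not normalise $g$; it keeps $f$ in its normal form, asserts that the least codimension is attained for the most transversal $g$, takes $g(x)=(x_1,\dots,x_{p-1},\sum_{i\ge p}x_i^2+x_{p-1})$ (discriminant $\{X_p=X_{p-1}\}$), and identifies the quotient with $\mathcal{O}_{p-1}/\{-\eta_{p-1}(x,x_{p-1})+\eta_p(x,x_{p-1})\}$. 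For the Euler field this contribution is the nonzero linear form $(d_p-d_{p-1})X_{p-1}$; the remaining generators contribute only quadratic terms, so the quotient has dimension $\ge p-1$ and $\mathcal{A}_e\text{-}\cod(h)\ge p$.

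There is a slip in your $n\ge p$ step. After you normalise $g$ via a target diffeomorphism $\psi$, the module $Lift(f)$ is replaced by $\psi_*Lift(f)$. The coordinate-invariant content of Lemma~\ref{primngeqp} --- exactly one generator modulo $\mathcal M_p^{2}\theta_p$ --- does survive, but the explicit form $\sum d_iX_i\,\partial/\partial X_i$ of that generator does not; its last component restricted to $X_p=0$ need not vanish. So your conclusion $J\subseteq\mathcal M_{p-1}^{2}$ and the bound $\ge p+1$ are unjustified, and indeed Remark~\ref{ejemcati}(3) exhibits bigerms of codimension exactly $p$. The repair is immediate and is already how you argue the $n=p-1$ case: the single linear-part generator contributes at most one linear form to $J$, the others contribute elements of $\mathcal M_{p-1}^{2}$, hence $\dim_{\mathbb C}\mathcal O_{p-1}/J\ge p-1$ and $\mathcal{A}_e\text{-}\cod(h)\ge p$, which is the stated bound. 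With this correction your argument is complete and, since it handles an arbitrary $g$ after normalisation, it actually sidesteps the paper's unproved assertion that its particular choice of $g$ minimises the codimension.
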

\begin{proof}
Considering the normal forms of the primitive $\mathcal
A_e$-codimension 1 monogerms given in Proposition \ref{a2an} and
Lemma \ref{liftprim} the least codimension of $h$ occurs when $g$ is
such that $g\pitchfork f$ and $g$ is transversal to all the limits
of tangent spaces to the strata of the stratification by stable
types in the target, so we take
$g(x_1,\ldots,x_n)=(x_1,\ldots,x_{p-1},\sum_{i=p}^n x_i^2+x_{p-1})$.
Consider the short exact sequence:

\begin{equation*}
\begin{CD}
0@>>> \frac{\theta(g)}{tg(\theta_{n})+wg(Lift(f))} @>>> N\mathcal
A_e(\{f,g\}) @>{\overline{h}}>> N\mathcal A_e(f) @>>> 0
\end{CD}
\end{equation*}

From here we have that
$$\mathcal{A}_e-cod(\{f,g\})=\mathcal{A}_e-cod(f)+\dim_{\mathbb C}
\frac{\theta(g)}{tg(\theta_{n})+wg(Lift(f))}.$$ Now,
$\frac{\theta(g)}{tg(\theta_{n})+wg(Lift(f))}$ is isomorphic to
$$\frac{\mathcal O_{n}\oplus\mathcal
O_n}{\{w:w=(v_{p-1}(x)+\eta_{p-1}\circ g(x),v_{p-1}(x)+\sum_{i=p}^n
2x_iv_i(x)+\eta_p\circ g(x))\}}$$ by projection to the last two
components, where $v_i\in \mathcal O_n$ and $\eta_{p-1}$ and
$\eta_{p}$ are the last two components of the vector fields in
$Lift(f)$. And this is isomorphic to $$\frac{\mathcal
O_{p-1}}{\{w_2:w_2=-\eta_{p-1}(x_1,\ldots,x_{p-1},x_{p-1})+\eta_{p}(x_1,\ldots,x_{p-1},x_{p-1})\}}.$$
When $n\geq p$, from Lemma \ref{primngeqp}, the components of vector
fields in $Lift(f)$ other than the Euler vector field are quadratic
or of higher order, so the dimension of this space is greater than
or equal to $p-1$. Finally we have that the codimension of $h$ is
greater than or equal to $p$.

When $n=p-1$, from the above Lemma we have that there are at most
$\frac{p}{2}+1$ linearly independent vector fields in $Lift(f)$ with
linear parts, so the dimension of the quotient is greater than or
equal to $p-(\frac{p}{2}+1)=\frac{p}{2}-1$. Finally we have that the
codimension of $h$ is greater than or equal to $\frac{p}{2}$.
\end{proof}

\begin{rem}\label{ejemcati}
\begin{enumerate}
\item[1)] When $p=1$, the bigerm formed by a Morse function and an $A_2$-singularity has codimension 2.
\item[2)]
If $n=1$ and $p=2$, we have the codimension 2 bigerm
$\{(x^2,x^3),(0,x)\}$, and if $n=p=2$ we have the codimension 2
bigerm
\begin{equation}
\begin{cases}
(x^4+yx,y)\\
(x,y^2+x)
\end{cases}
\end{equation}

\item[3)]
In the equidimensional case, given the bigerm
\begin{equation}
\begin{cases}
(x_1^{n+2}+x_2x_1+\ldots+x_nx_1^{n-1},x_2,\ldots,x_n)\\
(x_1,\ldots,x_{n-1},x_{n}^2+x_{n-1})
\end{cases}
\end{equation}
the codimension is exactly $n$ (except when $n=1$, see case 1)
above) and is non-simple when $n>2$.

\item[4)] When $(n,p)=(3,4)$, the bigerm
\begin{equation}
\begin{cases}
(u,v,x^3+ux,x^4+vx)\\
(u,v,u,x)
\end{cases}
\end{equation}
has codimension 2.
\end{enumerate}
\end{rem}

\begin{rem}\label{primmultmorse}
Proposition \ref{codprimmorse} can be adapted to the case where $f$
is a primitive codimension 1 multigerm. For example, in the case
$n=p-1$, suppose that $f=\{f_0,g_0\}$ is a codimension 1 monic
concatenation with $\widetilde\tau(f_0)=\{0\}$ ($f_0$ is stable and
$n=2k-2$ must be even). That is
$$f_0(x_1,\ldots,x_n)=(x_1,\ldots,x_{n-1},x_n^{\frac{n}{2}+1}+\sum_{i=1}^{\frac{n}{2}-1}x_ix_n^i,x_n^{\frac{n}{2}+2}+\sum_{i=\frac{n}{2}}^{n-1}x_ix_n^i)$$
and $g_0(x_1,\ldots,x_n)=(x_1,\ldots,x_{n-2},0,x_{n-1},x_{n})$.
$Lift(f_0)$ is as in Lemma \ref{liftprim} and $Lift(g_0)=\langle
\frac{\partial}{\partial U_1},\ldots,V_{k-1}\frac{\partial}{\partial
V_{k-1}},\frac{\partial}{\partial W_1},\frac{\partial}{\partial
W_2}\rangle$, so $Lift(f)=Lift(\{f_0,g_0\})$ is the intersection,
and there are only $k$ vector fields with linear parts. Analogously
to the proof of Proposition \ref{codprimmorse}, in order to get the
least codimension of $h$, we take
$g(x_1,\ldots,x_n)=(x_1,\ldots,x_{n-2},x_{n},x_{n-1},x_{n}).$ We
must calculate the dimension of
$$\frac{\mathcal
O_{p-1}}{\{w_2:w_2=-\eta_{p-2}(x_1,\ldots,x_{p-3},x_{p-1},x_{p-2},x_{p-1})+\eta_{p}(x_1,\ldots,x_{p-3},x_{p-1},x_{p-2},x_{p-1})\}}.$$
Out of the vector fields with non zero linear parts in $Lift(f)$,
only the first (as shown in Lemma \ref{liftprim}) has components in
$\frac{\partial}{\partial V_{k-1}}$ and $\frac{\partial}{\partial
W_2}$. Therefore, the dimension of the quotient is at least $p-1$
since we are missing constants and the linear terms
$x_1,\ldots,x_{p-2}$. This means that $\mathcal A_e-\cod(h)\geq
\mathcal A_e-\cod(f)+p-1=p$.

If $f_0$ is a stable multigerm with $\widetilde\tau(f_0)=\{0\}$, we
can repeat the diagonal block matrix argument from the proof of
Theorem \ref{codn} to prove that $\mathcal A_e-\cod(h)\geq p$.
\end{rem}

\begin{coro}\label{corocod1}
Let $p>2$ and $h=\{f,g\}$ be multigerm of $\mathcal A_e$-codimension
2 with $f$ a monogerm of codimension 1, then $g$ is a prism on a
Morse function or an immersion and $h$ is an augmentation and
concatenation except for the normal form from Remark \ref{ejemcati}
4), where we have a primitive monogerm and an immersion of
codimension 2. Furthermore, if the multiplicity of $f$ is less than
or equal to $p$ in the case $n\geq p$ or to $[\frac{n}{2}]$ in the
case $n=p-1$, then it is also an augmentation.
\end{coro}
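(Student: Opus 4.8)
The plan is to extract the shape of $g$ from Proposition \ref{cod2} and then split according to whether the monogerm $f$ is primitive or an augmentation. The first assertion is immediate: applied with $h'=f$ (a proper subgerm of $\mathcal A_e$-codimension $1$), Proposition \ref{cod2} gives $h=\{f,g\}$ with $g$ a prism on a Morse function when $n\ge p$ and an immersion when $n=p-1$.

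\emph{Suppose $f$ is primitive.} Then Proposition \ref{codprimmorse} forces $\mathcal A_e-\cod(h)\ge p$ when $n\ge p$ and $\mathcal A_e-\cod(h)\ge\frac{p}{2}$ when $n=p-1$. Since $\mathcal A_e-\cod(h)=2$ and $p>2$, the case $n\ge p$ cannot occur; in the case $n=p-1$ we are forced into $p\le 4$, and Lemma \ref{liftprim} (using that there are no $\mathcal A_e$-codimension $1$ primitive germs with $n$ even) makes $n$ odd, so $p=n+1$ is even and thus $p=4$, $n=3$. In this case $f$ is $\mathcal A$-equivalent to the primitive $\mathcal A_e$-codimension $1$ monogerm $(\C^3,0)\to(\C^4,0)$ singled out by Lemma \ref{liftprim} with $k=3$, namely $(u,v,x)\mapsto(u,v,x^3+ux,x^4+vx)$; and since $\mathcal A_e-\cod(h)=2$ is exactly the lower bound of Proposition \ref{codprimmorse}, the immersion $g$ lies in the (unique up to $\mathcal A$) general position realising that bound, whence $h$ is $\mathcal A$-equivalent to the bigerm of Remark \ref{ejemcati} 4), a primitive monogerm together with an immersion. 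This is the stated exception.

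\emph{Suppose $f$ is an augmentation.} By Theorem \ref{aughous} we may write $f\sim_{\mathcal A}A_{H,\phi}(h_0)$ with $h_0$ admitting a $1$-parameter stable unfolding $H$ and $\phi:(\C^q,0)\to(\C,0)$, $q\ge 1$; here $\phi$ is not a submersion (else $f$ would be stable), and from $1=\mathcal A_e-\cod(f)\ge\mathcal A_e-\cod(h_0)\,\tau(\phi)$ with both factors positive we get $\mathcal A_e-\cod(h_0)=\tau(\phi)=1$. Since $\tau(\phi)=1$ forces $\phi$ to be a Morse singularity (hence quasi-homogeneous), and peeling off its Morse variables one at a time leaves an $\mathcal A_e$-codimension $1$ germ at each step, we may assume $q=1$, that is $\phi(z)\sim z^2$. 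Let $g_0$ be the fold map attached to $h_0$ in Theorem \ref{augconc}; by that theorem the augmentation and concatenation $\{A_{H,\phi}(h_0),g_0\}$ admits a $1$-parameter stable unfolding and has $\mathcal A_e$-codimension $\ge\mathcal A_e-\cod(h_0)(\tau(\phi)+1)=2$, with equality once one checks the condition $\langle dZ(i^*(\mathrm{Lift}(A_{H,\phi}(h_0))))\rangle=\langle dZ(i^*(\mathrm{Lift}(H)))\rangle$ (the right-hand side equalling $\mathcal M_p$ by Damon's formula, since $\mathcal A_e-\cod(h_0)=1$). It remains to identify $h=\{f,g\}$ with $\{A_{H,\phi}(h_0),g_0\}$: both are $\mathcal A_e$-codimension $2$ bigerms whose first branch is $\mathcal A$-equivalent to $f$ and whose second branch is a prism on a Morse function (resp.\ an immersion), and $\mathcal A_e$-codimension $2$, being the minimal value compatible with $f$, pins down up to $\mathcal A$ the position of the stable branch relative to the limiting tangent spaces of the discriminant strata of a stable unfolding of $f$ (compare the transversality hypotheses of Theorems \ref{codn} and \ref{codndoublefold}); hence $h$ is an augmentation and concatenation.

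\emph{Finally}, in every non-exceptional case $h$ is an augmentation and concatenation, so by Theorem \ref{augconc} ii) it admits a $1$-parameter stable unfolding, and Proposition \ref{aug2} ii) then gives that $h$ is an augmentation exactly when $m_0(f)\le p$ (case $n\ge p$) or $m_0(f)\le[\frac{n}{2}]$ (case $n=p-1$); this is consistent with the exception being excluded, since there $m_0(f)=3>1=[\frac{3}{2}]$. The step I expect to be the real obstacle is the rigidity invoked in the augmentation case --- that an $\mathcal A_e$-codimension $2$ bigerm with a prescribed $\mathcal A_e$-codimension $1$ first branch and a stable second branch is determined up to $\mathcal A$-equivalence --- together with the verification of the equality case of Theorem \ref{augconc} for $\phi(z)=z^2$; both ought to follow from the transversality characterisations already in hand, but that is where the genuine work lies.
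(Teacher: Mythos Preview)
Your proposal is correct and follows the same route the paper implicitly takes: the corollary is stated without proof, and the intended argument is precisely to combine Proposition~\ref{cod2} (for the shape of $g$), Proposition~\ref{codprimmorse} and Remark~\ref{ejemcati} (for the primitive case), and Proposition~\ref{aug2}~ii) together with the Remark following it (for the augmentation case and the final multiplicity statement). The rigidity step you flag as the real obstacle is exactly the point where the paper is also brief: in the proof of Proposition~\ref{aug2}~ii) the paper disposes of it in one line by constructing an explicit model bigerm of $\mathcal A_e$-codimension $2$ and asserting that ``since it has codimension $2$ and is simple, it is $\mathcal A$-equivalent to $h$'', so your concern is well placed but does not indicate a divergence from the paper's argument.
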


\begin{prop}\cite[Propostion 5.16]{robertamond}\label{robmond}
Let $h=\{f,g\}$ be a primitive $\mathcal A_e$-codimension 1
multigerm, and suppose that $g$ is not transverse to
$\widetilde\tau(f)$. Then
\begin{enumerate}
\item[i)] if moreover $g$ and $f$ are transverse, it follows that
$g$ is a prism on a Morse function or an immersion and $h$ is a
monic concatenation (in particular $f\pitchfork \widetilde\tau(g)$).
\item[ii)] if $g$ and $f$ are not transverse, then $p=1$, and $f$
and $g$ are both Morse functions.
\end{enumerate}
\end{prop}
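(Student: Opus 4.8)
The plan is to compare $h=\{f,g\}$ --- with $g$ the distinguished branch and $f$ the (multi)germ formed by the remaining branches --- with $f$ by means of the short exact sequence used in the proof of Proposition~\ref{cod2},
$$0\longrightarrow Q\longrightarrow N\mathcal A_e(h)\longrightarrow N\mathcal A_e(f)\longrightarrow 0,\qquad Q:=\frac{\theta(g)}{tg(\theta_n)+wg(Lift(f))},$$
so that $\mathcal A_e-\cod(f)+\dim_{\mathbb C}Q=1$. The denominator of $Q$, evaluated at the origin, is $dg_0(T_0\mathbb C^n)+\widetilde\tau(f)$, a \emph{proper} subspace of $T_0\mathbb C^p$ exactly because $g$ is not transverse to $\widetilde\tau(f)$; hence $\dim_{\mathbb C}Q\geq 1$, forcing $\mathcal A_e-\cod(f)=0$ (so $f$ is stable) and $\dim_{\mathbb C}Q=1$. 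Since $wg(Lift(f))\subseteq wg(\theta_p)$, the same sequence gives $\mathcal A_e-\cod(g)\leq 1$; if it were $1$ the induced surjection $Q\to N\mathcal A_e(g)$ would be an isomorphism, i.e. $wg(\theta_p)\subseteq tg(\theta_n)+wg(Lift(f))$, and evaluating at the origin would give $dg_0(T_0\mathbb C^n)+\widetilde\tau(f)=T_0\mathbb C^p$, contradicting the hypothesis. So $g$ is stable as well, and because $\dim_{\mathbb C}Q=1$ while its evaluation at the origin already drops the dimension, Nakayama yields $\mathcal M_n\theta(g)\subseteq tg(\theta_n)+wg(Lift(f))$. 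Throughout I use the elementary fact that $\widetilde\tau(\varphi)=ev_0(Lift(\varphi))\subseteq d\varphi_0(T_0)$ for every stable map-germ $\varphi$.

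I would next fix the underlying linear algebra. Since $\mathcal A_e-\cod(h)=1$, the miniversal unfolding of $h$ is a $1$-parameter stable unfolding, so Proposition~\ref{almreg} applies to $h$; combined with the primitivity of $h$ and Corollary~\ref{aug} (whose hypotheses --- $|S|>1$, all branches stable, $h$ non-stable with a $1$-parameter stable unfolding --- are in force) this gives, when $f$ is a monogerm, $\cod\widetilde\tau(f)+\cod\widetilde\tau(g)=p+1$ and $\widetilde\tau(f)\cap\widetilde\tau(g)=\{0\}$, i.e. $\widetilde\tau(f)\oplus\widetilde\tau(g)$ has codimension $1$ in $T_0\mathbb C^p$ (the case of a multigerm $f$ needs the analogous bookkeeping with the strata of its branches). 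Combining $\widetilde\tau(g)\subseteq dg_0(T_0)$ with the hypothesis $dg_0(T_0)+\widetilde\tau(f)\subsetneq T_0\mathbb C^p$ then forces $dg_0(T_0)+\widetilde\tau(f)=\widetilde\tau(f)\oplus\widetilde\tau(g)$, hence $dg_0(T_0)\subseteq\widetilde\tau(f)\oplus\widetilde\tau(g)$.

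For part (i), suppose moreover $g\pitchfork f$, i.e. $df_0(T_0)+dg_0(T_0)=T_0\mathbb C^p$. If $f$ were not transverse to $\widetilde\tau(g)$, the symmetric version of the previous step would give $df_0(T_0)\subseteq\widetilde\tau(f)\oplus\widetilde\tau(g)$, whence $df_0(T_0)+dg_0(T_0)\subseteq\widetilde\tau(f)\oplus\widetilde\tau(g)\subsetneq T_0\mathbb C^p$, a contradiction; so $f\pitchfork\widetilde\tau(g)$, the parenthetical assertion. To pin down $g$ I would pass to the target: since $f$ and $g$ are stable, $\dim_{\mathbb C}Q=\dim_{\mathbb C}\theta_p/(Lift(f)+Lift(g))$ by Damon's theorem (cf. the proof of Theorem~\ref{augconc}), so by Nakayama $\mathcal M_p\theta_p\subseteq Lift(f)+Lift(g)$. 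This inclusion, together with the rank bounds just obtained, the stability of $g$ and the fact that $(n,p)$ lies in Mather's nice dimensions, should leave only two options for the essential part of $g$ (the minimal stable germ of which $g$ is a prism): a point, so that $g$ is an immersion, or a Morse function, so that $g$ is a prism on a Morse function. Finally $\dim\widetilde\tau(f)\geq 1$ --- otherwise $\cod\widetilde\tau(f)=p$ and, $g$ being non-submersive, the codimension sum would exceed $p+1$ --- so $f$ is a $1$-parameter stable unfolding of a non-stable germ, and by Theorem~\ref{codconc} the multigerm $\{f,g\}$ is a monic concatenation of that germ.

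For part (ii), suppose $g\not\pitchfork f$; as $\widetilde\tau(f)\subseteq df_0(T_0)$ this already forces $g\not\pitchfork\widetilde\tau(f)$, so the whole analysis applies and, in addition, $df_0(T_0)\subseteq\widetilde\tau(f)\oplus\widetilde\tau(g)$. Thus both $df_0(T_0)$ and $dg_0(T_0)$ lie in the common hyperplane $N=\widetilde\tau(f)\oplus\widetilde\tau(g)$, so a linear form $\ell$ with $\ker d\ell_0=N$ makes $\ell\circ f$ and $\ell\circ g$ both singular at the origin; I expect that for $p\geq 2$ this ``double tangency'' of the discriminants of $f$ and $g$ to $\{\ell=0\}$, substituted back into the sequence above (or read off the short lists of stable and of codimension~$1$ multigerms in these dimensions), forces $\mathcal A_e-\cod(h)\geq 2$, so that necessarily $p=1$, where the only non-submersive stable germs $\mathbb C^n\to\mathbb C$ are Morse functions and $h$ is the bigerm of two of them. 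The steps establishing that $f,g$ are stable, that $\widetilde\tau(f)\oplus\widetilde\tau(g)$ has codimension one, and the transversality $f\pitchfork\widetilde\tau(g)$ are formal; the real obstacle is the last step in each part --- identifying $g$ in (i) and excluding $p\geq 2$ in (ii) --- which I would expect to settle cleanly by invoking Mather's list of stable germs and the classification of codimension~$1$ multigerms in \cite{robertamond} rather than by a raw codimension estimate.
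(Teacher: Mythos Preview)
The paper does not supply a proof of this proposition: it is simply quoted from \cite[Proposition~5.16]{robertamond} and used as input to the subsequent results of \S5. There is therefore no proof in the present paper against which to compare your attempt.

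Your overall strategy is nonetheless the natural one, and it is essentially what is carried out in the original reference: the exact sequence $0\to Q\to N\mathcal A_e(h)\to N\mathcal A_e(f)\to 0$ together with the evaluation argument forces $f$ and $g$ stable and $\dim_{\mathbb C}Q=1$, after which the linear algebra of the analytic strata and the transversality hypotheses do the rest. You correctly isolate the two substantive steps --- recognising $g$ in (i) and ruling out $p\geq 2$ in (ii) --- as the places that need outside classification input from \cite{robertamond}. One minor slip: your claim ``$\dim\widetilde\tau(f)\geq 1$, otherwise \dots the codimension sum would exceed $p+1$'' does not go through, since $\cod\widetilde\tau(f)=p$ and $\cod\widetilde\tau(g)=1$ give sum exactly $p+1$; but the step is unnecessary anyway, because once $g$ is a prism on a Morse function or an immersion and $f\pitchfork\widetilde\tau(g)$, the direction complementary to $\widetilde\tau(g)$ already lies in $Im(df_0)$ and can be taken as the unfolding parameter of $f$.
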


From here on we suppose that $h=\{f,g\}$ is a primitive multigerm of
$\mathcal A_e$-codimension 2 with $f$ and $g$ stable and $p>1$.

\begin{lem}\label{decomp}
Let $h=\{f,g\}$ be a primitive multigerm. If it admits a 1-parameter
stable unfolding then there is a decomposition
$T_0\C^p=\widetilde\tau(f)\oplus\widetilde\tau(g)\oplus \C\{v\}$ for
some $v\in\C^p$. If not, there exist $v_1$ and $v_2$ in $\C^p$ and a
decomposition $T_0\C^p=(\widetilde\tau(f)+\widetilde\tau(g))\oplus
\C\{v_1,v_2\}$ where all the sums are direct if and only if
$\dim(\widetilde\tau(f)\cap\widetilde\tau(g))=0$.
\end{lem}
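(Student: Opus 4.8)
The plan is to split along the dichotomy of the statement — whether or not $h$ admits a $1$-parameter stable unfolding — and in each case to read everything off from Proposition \ref{almreg2}(ii) together with Corollary \ref{aug}. The only computational ingredient is the elementary translation, for two subspaces $E_1,E_2\subseteq T_0\C^p$, between ``almost regular intersection of order $k$'' (i.e.\ $\cod(E_1\cap E_2)=\cod E_1+\cod E_2-k$) and the equivalent statement $\dim_{\C}(E_1+E_2)=p-k$; this follows at once from $\dim(E_1+E_2)=\dim E_1+\dim E_2-\dim(E_1\cap E_2)$.

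First I would treat the case in which $h$ admits a $1$-parameter stable unfolding. By Proposition \ref{almreg2}(ii) the subspaces $\widetilde\tau(f)$ and $\widetilde\tau(g)$ have almost regular intersection, i.e.\ $\cod(\widetilde\tau(f)\cap\widetilde\tau(g))=\cod\widetilde\tau(f)+\cod\widetilde\tau(g)-1$. On the other hand $h$ is primitive, hence not an augmentation; since $f,g$ are stable and $h$ has a $1$-parameter stable unfolding, Corollary \ref{aug} applied to the decomposition $h=\{f,g\}$ (recall that the analytic stratum of a stable multigerm is the intersection of the analytic strata of its branches, so $\widetilde\tau(f)\cap\widetilde\tau(g)$ is the intersection of all the $\widetilde\tau(h_i)$) forces $\cod\widetilde\tau(f)+\cod\widetilde\tau(g)>p$. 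Combining the two relations gives $\cod(\widetilde\tau(f)\cap\widetilde\tau(g))\geq p$; since a subspace of $T_0\C^p$ has codimension at most $p$, this is an equality, so $\widetilde\tau(f)\cap\widetilde\tau(g)=\{0\}$ and $\cod\widetilde\tau(f)+\cod\widetilde\tau(g)=p+1$. Hence $\dim_{\C}\bigl(\widetilde\tau(f)\oplus\widetilde\tau(g)\bigr)=p-1$, and choosing any $v\in\C^p$ spanning a line complementary to $\widetilde\tau(f)\oplus\widetilde\tau(g)$ gives $T_0\C^p=\widetilde\tau(f)\oplus\widetilde\tau(g)\oplus\C\{v\}$.

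Then I would handle the case where $h$ does not admit a $1$-parameter stable unfolding. Here Proposition \ref{almreg2}(ii) gives that $\widetilde\tau(f)$ and $\widetilde\tau(g)$ have almost regular intersection of order $2$, i.e.\ $\dim_{\C}(\widetilde\tau(f)+\widetilde\tau(g))=p-2$. Choosing linearly independent $v_1,v_2\in\C^p$ spanning a complement of $\widetilde\tau(f)+\widetilde\tau(g)$ yields $T_0\C^p=(\widetilde\tau(f)+\widetilde\tau(g))\oplus\C\{v_1,v_2\}$, and every sum appearing in this decomposition is direct precisely when the remaining sum $\widetilde\tau(f)+\widetilde\tau(g)$ is itself direct, i.e.\ precisely when $\dim(\widetilde\tau(f)\cap\widetilde\tau(g))=0$ — the asserted equivalence. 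I do not expect a real obstacle: this is a bookkeeping lemma assembling the earlier results, the one point requiring care being to verify that the hypotheses of Corollary \ref{aug} ($|S|>1$, all branches stable, existence of a $1$-parameter stable unfolding) are in force in the first case, so that primitivity genuinely delivers the strict inequality $\cod\widetilde\tau(f)+\cod\widetilde\tau(g)>p$ that collapses $\widetilde\tau(f)\cap\widetilde\tau(g)$ to $\{0\}$.
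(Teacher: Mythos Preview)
Your proof is correct and follows essentially the same route as the paper's: both cases are read off from Proposition~\ref{almreg2}(ii), and in the first case Corollary~\ref{aug} is invoked (using primitivity) to force $\widetilde\tau(f)\cap\widetilde\tau(g)=\{0\}$. You are in fact slightly more careful than the paper in noting why Corollary~\ref{aug}, stated for the individual branches $h_i$, still yields the inequality $\cod\widetilde\tau(f)+\cod\widetilde\tau(g)>p$ for the possibly multi-branch $f$ and $g$.
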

\begin{proof}
By Proposition \ref{almreg2}, if $h$ admits a 1-parameter stable
unfolding, then $\widetilde\tau(f)$ and $\widetilde\tau(g)$ have
almost regular intersection. Therefore
$\cod(\widetilde\tau(f)+\widetilde\tau(g))=\cod\widetilde\tau(f)+\cod\widetilde\tau(g)-\cod(\widetilde\tau(f)\cap\widetilde\tau(g))=1$.
So we have
$T_0\C^p=(\widetilde\tau(f)+\widetilde\tau(g))\oplus\C\{v\}$. On the
other hand, since $h$ is primitive, by Corollary \ref{aug}, we have
that $\cod\widetilde\tau(f)+\cod\widetilde\tau(g)>p$, therefore
$\cod(\widetilde\tau(f)\cap\widetilde\tau(g))>p-1$ and
$\dim(\widetilde\tau(f)\cap\widetilde\tau(g))=0$, which proves the
result for the first case.

In the case that there is no stable 1-parameter unfolding, by
Proposition \ref{almreg} the analytic strata have almost regular
intersection of order 2.
\end{proof}

\begin{lem}\label{casonoopsu}
Suppose that $p>2$, then there is no $h=\{f,g\}$ such that
$f\pitchfork g$, $g$ is not transverse to $\widetilde\tau(f)$ and
$f$ is not transverse to $\widetilde\tau(g)$.
\end{lem}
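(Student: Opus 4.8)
The plan is to suppose, for a contradiction, that such an $h=\{f,g\}$ exists with $p>2$ and to show it cannot have $\mathcal A_e$-codimension $2$. The starting point is a codimension formula for bigerms with stable branches. Since $f$ and $g$ are stable, $N\mathcal A_e(f)=0$, so the exact sequence used in the proof of Theorem \ref{augconc} (valid, as noted there, for any finitely determined map in place of $Af$) degenerates to $\mathcal A_e\text{-}\cod(h)=\dim_{\mathbb K}\theta(g)/(tg(\theta_n)+wg(Lift(f)))$; and since $g$ is stable, $wg$ induces an isomorphism $\theta_p/Lift(g)\cong\theta(g)/tg(\theta_n)$, whence $\mathcal A_e\text{-}\cod(h)=\dim_{\mathbb K}\theta_p/(Lift(f)+Lift(g))=2$. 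Evaluating at the origin gives a surjection onto $T_0\mathbb K^p/(\widetilde\tau(f)+\widetilde\tau(g))$, so in particular $\cod(\widetilde\tau(f)+\widetilde\tau(g))\le 2$.

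Next I would translate the transversality hypotheses into linear algebra. Any $\eta\in Lift(f)$ satisfies $\eta(0)=df_x(\xi(x))\in\operatorname{im}(df_x)$ at each base point $x$, so $\widetilde\tau(f)\subseteq\operatorname{im}(df_x)$, and likewise $\widetilde\tau(g)\subseteq\operatorname{im}(dg_y)$. Since $h$ is primitive, $f$ and $g$ each have a non‑submersive branch (otherwise $\cod\widetilde\tau=0$ and Corollary \ref{aug} would make $h$ an augmentation), whose image under the differential is a hyperplane in the equidimensional case. Now $g\not\pitchfork\widetilde\tau(f)$ forces $\widetilde\tau(f)\subseteq\operatorname{im}(dg_y)$ for some (necessarily non‑submersive, hence hyperplane) branch $y$ of $g$, symmetrically $\widetilde\tau(g)\subseteq\operatorname{im}(df_x)$, while $f\pitchfork g$ gives $\operatorname{im}(df_x)+\operatorname{im}(dg_y)=T_0\mathbb K^p$; so both $\widetilde\tau(f)$ and $\widetilde\tau(g)$ lie in the codimension‑$2$ subspace $\operatorname{im}(df_x)\cap\operatorname{im}(dg_y)$ and $\cod(\widetilde\tau(f)+\widetilde\tau(g))\ge 2$. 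Combining with the first paragraph, $\cod(\widetilde\tau(f)+\widetilde\tau(g))=2$ and $\mathcal M_p\theta_p\subseteq Lift(f)+Lift(g)$. By Lemma \ref{decomp}, Proposition \ref{almreg2} and Corollary \ref{aug} this means $h$ has no $1$‑parameter stable unfolding, the $\widetilde\tau(f_i)$ meet almost regularly of order $2$, and $\cod(\widetilde\tau(f)\cap\widetilde\tau(g))=\cod\widetilde\tau(f)+\cod\widetilde\tau(g)-2>p-2$, i.e. $\dim(\widetilde\tau(f)\cap\widetilde\tau(g))\le 1$.

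I would then split into two cases. If $\dim(\widetilde\tau(f)\cap\widetilde\tau(g))=1$: the analytic stratum of $f$ is a submanifold germ $\Sigma_f$ with $T_0\Sigma_f=\widetilde\tau(f)$ along which $Lift(f)=Derlog(D(f))$ is tangent, and likewise for $g$; using $\mathcal M_p\theta_p\subseteq Lift(f)+Lift(g)$ one sees $\Sigma_f$ and $\Sigma_g$ must share a curve through the origin (equivalently $\widetilde\tau(h)\ne 0$, so $h$ is a prism on a lower‑dimensional primitive codimension‑$2$ bigerm and one recurses on $p$, the base case $p=2$ forcing $\widetilde\tau(f)+\widetilde\tau(g)=\{0\}$, treated below). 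At a point $x\ne 0$ of $\Sigma_f\cap\Sigma_g$ the relevant stratum tangent spaces are $\widetilde\tau(f)$ and $\widetilde\tau(g)$, whose sum has codimension $2>0$, so $h$ is not stable at $x$, contradicting finite $\mathcal A_e$‑determinacy. If $\dim(\widetilde\tau(f)\cap\widetilde\tau(g))=0$: then $f$ and $g$ are prisms on minimal‑corank stable germs $f_1,g_1$ with $\widetilde\tau(f_1)=\widetilde\tau(g_1)=\{0\}$; choosing coordinates in which $\widetilde\tau(f)$ and $\widetilde\tau(g)$ are complementary coordinate subspaces and $\operatorname{im}(df_x)\cap\operatorname{im}(dg_y)=\widetilde\tau(f)\oplus\widetilde\tau(g)$, one checks that $h$ is $\mathcal A$‑equivalent to a generalised concatenation $\{F,g'\}$ of a stable germ $F$ with $\widetilde\tau(F)=\{0\}$ and a suspension $g'$ in the remaining two target coordinates of the $0$‑modal stable germ $g_1$ (the cuspidal concatenation when $g_1$ is a cusp, the double fold concatenation when $g_1$ is a pair of transverse folds, etc.). Theorems \ref{codn} and \ref{codndoublefold}, and Remark \ref{primmultmorse} when $n=p-1$ — or a direct estimate of $\theta_p/(Lift(f)+Lift(g))$ in the spirit of Proposition \ref{a2an} applied to the block decomposition of $Lift(f_1)$ and $Lift(g_1)$ — then give $\mathcal A_e\text{-}\cod(h)\ge p>2$, contradicting $\mathcal A_e\text{-}\cod(h)=2$.

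The main obstacle is this last case: one must recognise that $\dim(\widetilde\tau(f)\cap\widetilde\tau(g))=0$ together with the two non‑transversality conditions pins $h$, up to $\mathcal A$‑equivalence, into the family of generalised concatenations of §4 with a stable germ having $\widetilde\tau=0$, and then invoke the codimension lower bounds proved there; the explicit bookkeeping with the generators of $Lift(f_1)$ and $Lift(g_1)$ and their projections onto the two "free" target coordinates is where the real work lies. That the hypothesis $p>2$ cannot be dropped is shown by the bigerm of two transverse cusps in $\mathbb K^2$, which has $\mathcal A_e$‑codimension exactly $2$.
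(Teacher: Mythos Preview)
Your approach is genuinely different from the paper's, and the first two paragraphs are fine: the identity $\mathcal A_e\text{-}\cod(h)=\dim_{\mathbb K}\theta_p/(Lift(f)+Lift(g))$ is correct, and the linear algebra showing $\widetilde\tau(f)+\widetilde\tau(g)\subseteq\operatorname{im}(df_x)\cap\operatorname{im}(dg_y)$ (hence $\cod(\widetilde\tau(f)+\widetilde\tau(g))=2$ and $\mathcal M_p\theta_p\subseteq Lift(f)+Lift(g)$) is a nice observation the paper does not make.

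The gap is in your case analysis. First, your appeal to Corollary \ref{aug} to get $\cod\widetilde\tau(f)+\cod\widetilde\tau(g)>p$ (hence $\dim(\widetilde\tau(f)\cap\widetilde\tau(g))\le 1$) is illegitimate: that corollary requires a $1$-parameter stable unfolding, which you have just shown does not exist. Second, even granting the bound, your case $\dim(\widetilde\tau(f)\cap\widetilde\tau(g))=1$ does not go through: from $\mathcal M_p\theta_p\subseteq Lift(f)+Lift(g)$ and $\widetilde\tau(f)\cap\widetilde\tau(g)\ne 0$ you cannot conclude $\widetilde\tau(h)=ev_0(Lift(f)\cap Lift(g))\ne 0$, nor that the analytic strata of $f$ and $g$ share a curve. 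Third, your case $\dim(\widetilde\tau(f)\cap\widetilde\tau(g))=0$ only matches Theorems \ref{codn}, \ref{codndoublefold} and Remark \ref{primmultmorse} when one of $\widetilde\tau(f),\widetilde\tau(g)$ is actually $\{0\}$ (so that the other has codimension exactly $2$ and is a cusp, double fold, or double immersion); when both are nonzero you have $\cod\widetilde\tau(g)>2$, the ``$g$-piece'' is not a cusp or a pair of folds, and no lower bound from \S4 applies.

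What the paper does instead, and what you are missing, is a deformation argument for precisely this residual case $1<\cod\widetilde\tau(f),\cod\widetilde\tau(g)<p$. One perturbs $g$ in a $1$-parameter family $g_u$ keeping $\widetilde\tau(g_u)=\widetilde\tau(g)$ but making $g_u\pitchfork\widetilde\tau(f)$; since $f$ is still not transverse to $\widetilde\tau(g_u)$ the deformed bigerm $h_u$ is not stable, so $\mathcal A_e\text{-}\cod(h_u)\ge 1$. If it equals $1$ one repeats with $f$, and the simplicity of all codimension-$1$ multigerms (from \cite{robertamond}) forces a contradiction; otherwise $h_u$ has codimension $2$ but is not equivalent to $h$, so $h$ is non-simple or of higher codimension. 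This simplicity/deformation step is the key idea your proposal lacks.
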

\begin{proof}
Suppose there is such an $h$. If $f$ is a prism on a Morse function
or an immersion, then $Im(df_0)=\widetilde\tau(f)$, and so
$f\pitchfork g$ implies $f\pitchfork \widetilde\tau(g)$, which is a
contradiction. Therefore $f$ is not a prism on a Morse function or
an immersion. Equally for $g$.

Now suppose that $\widetilde\tau(f)=\{0\}$. By Proposition
\ref{almreg2}, $2\geq
\cod\widetilde\tau(f)+\cod\widetilde\tau(g)-\cod(\widetilde\tau(f)\cap\widetilde\tau(g))=\cod\widetilde\tau(g).$
Since $g$ is not a prism on a Morse function or an immersion,
$\cod\widetilde\tau(g)=2$. In the case $n\geq p$, Theorems
\ref{codn} and \ref{codndoublefold} imply that $\mathcal
A_e-\cod(h)\geq n\geq p>2$. This contradicts that $\mathcal
A_e-\cod(h)=2$ and so $\widetilde\tau(f)\neq\{0\}$ (the same is
valid for $g$). In the case $p=n+1$, since there are no stable
monogerms with codimension 2 analytic stratum, $g$ must be a double
immersion. Remark \ref{primmultmorse} implies that $\mathcal
A_e-\cod(h)\geq p>2$, which again is a contradiction.


Therefore $\widetilde\tau(f)\neq\{0\}\neq\widetilde\tau(g)$. Since
neither $f$ or $g$ can be a prism on a Morse function or an
immersion, we have that $1<\cod\widetilde\tau(f)<p$ and
$1<\cod\widetilde\tau(g)<p$. We construct a 1-parameter deformation
$h_{u}$ of $h$ by constructing a 1-parameter generic deformation of
$g$ such that $\widetilde\tau(g)$ remains fixed and $g_u$ becomes
transverse to $\widetilde\tau(f)$. Since $f$ is still not transverse
to $\widetilde\tau(g)$, $\mathcal A_e-\cod(h_u)\geq 1$. If $\mathcal
A_e-\cod(h_u)=1$, we construct a 1-parameter generic deformation of
$f$ such that $\widetilde\tau(f)$ remains fixed and $f_v$ becomes
transverse to $\widetilde\tau(g)$. It follows that $h_{(u,v)}$ is
not equivalent to $h_u=h_{(u,0)}$. Since $\widetilde\tau(f)$ and
$\widetilde\tau(g)$ remain fixed, they are still not transverse and
so $h_{(u,v)}$ is not stable for $v\neq 0$. This would imply that
for each $u$ we have a 1-parameter deformation where each member is
not equivalent to $h_u$ and is not stable. This is impossible since
$h_u$ has codimension 1, and by \cite{robertamond} all codimension 1
germs are simple. So $\mathcal A_e-\cod(h_u)=2$ and $h_u$ is not
equivalent to $h$ for $u\neq 0$. Therefore, either $h$ is non-simple
or $\mathcal A_e-\cod(h)>2$.
\end{proof}

\begin{prop}\label{nottrans}
Suppose that $g=\{g_1,\ldots,g_r\}$ is not transverse to
$\widetilde\tau(f)$ and $f\pitchfork g$, then
\begin{enumerate}
\item[i)] If $r=1$ and $Im(dg_0)=\widetilde\tau(g)$, then $h$ is a monic concatenation.

\item[ii)] If $r=1$ and $Im(dg_0)\supsetneqq\widetilde\tau(g)$, then $h$ is one of the following

\begin{itemize}
\item a (non-monic) generalised concatenation with $g$,
\item $g$ is an $A_2$-singularity and $f$ is either an $A_2$-singularity or a bigerm of two prisms on Morse functions (only if $n\geq p=2$).
\end{itemize}

\item[iii)] If $r>1$, then $r=2$ and $h$ is one of the following

\begin{itemize}
\item a double fold (immersion) concatenation with $g$,
\item a trigerm of an $A_2$-singularity with two prisms on Morse functions (only if $n\geq p=2$).
\end{itemize}
\end{enumerate}

\end{prop}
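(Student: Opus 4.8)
The plan is to reconstruct the shape of $h$ from the transversality data of the pair $(f,g)$, first peeling off $p=2$. When $p=2$ one has $n\ge p$, and the finitely determined codimension $2$ multigerms into the plane are completely classified (see \cite{Ohmoto},\cite{kolgushkin}); the exceptional items in ii) and iii) are read off that list, while every other $p=2$ possibility is a concatenation and falls under the generic discussion below. So from now on $p>2$. I will use throughout that, by Proposition \ref{cod2}, every proper sub-multigerm of $h$ has $\mathcal A_e$-codimension $\le 1$; that $\widetilde\tau(g_i)\subseteq Im(d(g_i)_0)$ for every branch (evaluate $dg_i\circ\xi=\eta\circ g_i$ at the origin); and, crucially, that Lemma \ref{casonoopsu} forces $f\pitchfork\widetilde\tau(g)$, since $f\pitchfork g$ and $g\not\pitchfork\widetilde\tau(f)$.

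For i): the equality $Im(dg_0)=\widetilde\tau(g)$ together with the standing minimal-corank hypothesis identifies $g$, up to $\mathcal A$-equivalence, with a prism on a Morse function or an immersion (among corank $\le1$ stable monogerms only folds and immersions have their analytic stratum equal to $Im(dg_0)$). Hence $\widetilde\tau(g)$ is a linear subspace $H$ and $g\not\pitchfork\widetilde\tau(f)$ reads $\widetilde\tau(f)\subseteq H$. Choosing a complement $\C\{v\}$ of $H$ in $\C^p$, the relation $f\pitchfork H$ says the $v$-component $f_v$ of $f$ is a submersion, so $X_0=\{f_v=0\}$ is smooth of dimension $n-1$ and $f'=f|_{X_0}\colon X_0\to H$ has $f$ as a $1$-parameter stable unfolding. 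If $f'$ were stable, $f$ would be $\mathcal A$-equivalent to a prism on $f'$, hence $v\in\widetilde\tau(f)\subseteq H$, a contradiction; so $f'$ is non-stable, $h=\{f,g\}$ is up to $\mathcal A$-equivalence the monic concatenation of $f'$, and $\mathcal{A}_e-\cod(f')=2$ by Theorem \ref{codconc}.

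For ii): now $g$ is not a prism on a Morse function or an immersion, so $s:=\cod\widetilde\tau(g)\ge1$, and peeling off the analytic stratum writes $g\sim_{\mathcal A}Id_{\C^{p-s}}\times\overline g$ with $\overline g$ stable, $\widetilde\tau(\overline g)=0$. Since $f\pitchfork\widetilde\tau(g)$, the last $s$ components of $f$ form a submersion, so after a compatible change of coordinates $f$ is literally an $s$-parameter stable unfolding $F$ of a germ $f'\colon\C^{n-s}\to\C^{p-s}$ of the triangular form in the definition, the transversality $F\pitchfork\widetilde\tau(g)$ survives, and $\{f,g\}$ is a generalised concatenation of $f'$ with $g$, non-monic because $g$ is not a fold. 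This can fail only if $p-s=0$, i.e. $\widetilde\tau(g)=\{0\}$; for $p>2$ that would make $df_0$ surjective by Lemma \ref{casonoopsu}, contradicting that $f$ is a nonsubmersive branch, so it cannot happen, while for $p=2$ it occurs exactly when $g$ is (a suspension of) an $A_2$-singularity — the $p=2$ item already extracted. The codimension lower bounds of Theorems \ref{codn}, \ref{codndoublefold}, Proposition \ref{codprimmorse} and Remark \ref{primmultmorse} then confirm that among the configurations actually occurring with $p>2$, those of codimension $2$ are precisely the generalised concatenations above.

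For iii): each branch satisfies $f\pitchfork g_i$ and, by $\widetilde\tau(g_i)\subseteq Im(d(g_i)_0)$, also $\widetilde\tau(g_i)\not\pitchfork\widetilde\tau(f)$, so $\{f,g_i\}$ is not stable and hence of codimension exactly $1$ by Proposition \ref{cod2}; Proposition \ref{robmond} then makes each $g_i$ a prism on a Morse function or an immersion with $\{f,g_i\}$ a monic concatenation, so $f$ is a $1$-parameter stable unfolding and $f\pitchfork\widetilde\tau(g_i)$. To see $r=2$: in the exact sequence of Proposition \ref{cod2} for $\{f,g_1,g_2\}$ regarded as $\{\{f,g_1\},g_2\}$, the module $tg_2(\theta_n)+wg_2(Lift(\{f,g_1\}))$ omits a constant vector field since $g_2\not\pitchfork\widetilde\tau(f)\supseteq\widetilde\tau(\{f,g_1\})$, so the kernel is nonzero and $\mathcal{A}_e-\cod(\{f,g_1,g_2\})\ge2$; if $r\ge3$ this contradicts Proposition \ref{cod2}, whence $r=2$. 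With $r=2$, the two folds (immersions) sit around the common primitive $f'$: either their analytic strata are in the compatible position of the definition, so $\{f,g\}$ is a double fold (double immersion) concatenation of $f'$, or we are left with a triple tangency, of codimension $>2$ for $p>2$ by the bounds above and, for $p=2$, the listed trigerm of an $A_2$-singularity with two prisms on Morse functions. The step I expect to be the real obstacle is ii): arranging by one simultaneous change of source and target coordinates that $f$ genuinely acquires the triangular unfolding form while $g=Id\times\overline g$, and keeping the codimension bookkeeping with Theorems \ref{codn}, \ref{codndoublefold}, Proposition \ref{codprimmorse} and Remark \ref{primmultmorse} tight enough that the exceptional $p=2$ items are exactly the ones listed — in particular ruling out that, in iii), some $g_i$ is a deeper stable germ rather than a fold.
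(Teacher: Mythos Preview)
Your argument for iii) has a genuine gap at the very first step. From the hypothesis $g\not\pitchfork\widetilde\tau(f)$ you conclude that \emph{every} branch satisfies $\widetilde\tau(g_i)\not\pitchfork\widetilde\tau(f)$, hence every $\{f,g_i\}$ is non-stable. This does not follow: for a multigerm, $g\pitchfork V$ means each $g_i\pitchfork V$, so $g\not\pitchfork\widetilde\tau(f)$ only guarantees that \emph{some} $g_{i_0}$ fails transversality; the other branches may well satisfy $g_i\pitchfork\widetilde\tau(f)$ and the corresponding $\{f,g_i\}$ may be stable. (The inclusion $\widetilde\tau(g_i)\subseteq Im(d(g_i)_0)$ you invoke does nothing here.) The paper's route is the contrapositive: if every $\{f,g_i\}$ were stable, Mather's criterion would give $\widetilde\tau(g_i)\pitchfork\widetilde\tau(f)$ for all $i$, hence $g_i\pitchfork\widetilde\tau(f)$ for all $i$, hence $g\pitchfork\widetilde\tau(f)$, a contradiction. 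So only \emph{one} $\{f,g_{i_0}\}$ is known to have codimension~$1$, and then Proposition~\ref{cod2} applied to $h'=h\setminus\{g_{i_0}\}$ forces the entire remainder to be a single prism on a Morse function or immersion, giving $r=2$ at once --- there is no need for your exact-sequence argument, which in any case relies on the unestablished $g_2\not\pitchfork\widetilde\tau(f)$.

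This error propagates: having fixed $r=2$, the paper must still split into two subcases according to whether $g_{i_0}\pitchfork\widetilde\tau(f)$ or not. In the first subcase one swaps roles (it is $g_{i_1}$ that fails transversality, and $g_{i_0}$ is forced to be a fold/immersion by a second application of Proposition~\ref{cod2}); in the second subcase one applies Proposition~\ref{robmond}. Your treatment collapses both into a single appeal to Proposition~\ref{robmond}, which is not available when $g_{i_0}\pitchfork\widetilde\tau(f)$. Parts i) and ii) are broadly on the right track; your route in i) via Lemma~\ref{casonoopsu} (to get $f\pitchfork\widetilde\tau(g)$) is a legitimate alternative to the paper's use of Corollary~\ref{aug} (to get $\widetilde\tau(f)=\{0\}$), and in ii) the paper does essentially what you outline but with more care on the coordinate changes, splitting according to whether $h$ admits a $1$-parameter stable unfolding and whether $\widetilde\tau(f)\cap\widetilde\tau(g)=\{0\}$.
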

\begin{proof}
ii) Suppose $Im(dg_0)\supsetneqq\widetilde\tau(g)$. If
$\widetilde\tau(f)=\widetilde\tau(g)=\{0\}$, then
$\cod\widetilde\tau(f)+\cod\widetilde\tau(g)=2p$ and
$\cod(\widetilde\tau(f)\cap\widetilde\tau(g))=p$, therefore, by
Proposition \ref{almreg2}, $p\leq 2$. From the known classifications
mentioned at the beginning of the section, the only possibilities
are that $n\geq p=2$ and $h$ is a bigerm with two
$A_2$-singularities or a trigerm where $g$ is an $A_2$-singularity
and $f$ is a bigerm of two prisms on Morse functions. If $p>2$ we
can assume that $\widetilde\tau(g)\neq 0$. In fact, if
$\widetilde\tau(g)=\{0\}$, then $f$ is not transversal to
$\widetilde\tau(g)$. Since by hypothesis $g$ is not transversal to
$\widetilde\tau(f)$, it follows by Lemma \ref{casonoopsu} that there
is no such $h$.

Therefore, $1\leq p-s=\dim\widetilde\tau(g)<p-1$ and we are in the
case of a (non-monic) generalised concatenation (and $p>2$): if $h$
admits a 1-parameter stable unfolding we have a partition
$T_0\C^p=\C^{p-s}\times\C^{s-1}\times\C$, where
$\C^{p-s}\times\{0\}\times\{0\}$ is the analytic stratum of $g$ and
$\{0\}\times\C^{s-1}\times\{0\}$ is the analytic stratum of $f$. By
adequate changes of coordinates in the source we can take $g$ to the
form
$$g(x_1,\ldots,x_n)=(x_1,\ldots,x_{p-s},g_{p-s+1}(x_{p-s+1},\ldots,x_n),\ldots,g_{p}(x_{p-s+1},\ldots,x_n)).$$

Now, by a change of variables in the source, we can write
$f(x_1,\ldots,x_n)=(f_1(x_1,\ldots,x_n),\ldots,f_{p-s}(x_1,\ldots,x_n),x_{n-s+1},\ldots,x_{n-1},f_p(x_1,\ldots,x_n)).$
Since $f\pitchfork g$, if $\{0\}\times\{0\}\times \C\nsubseteq
Im(df_0)$, then $\{0\}\times\{0\}\times \C\subseteq Im(dg_0)$ and so
$g\pitchfork \widetilde\tau(f)$, which contradicts the hypothesis.
Therefore, we can take $f$ to the form
$$f(x_1,\ldots,x_n)=(f_1(x_1,\ldots,x_n),\ldots,f_{p-s}(x_1,\ldots,x_n),x_{n-s+1},\ldots,x_n),$$
so $f$ is an $s$-parameter stable unfolding of a certain $f_0$.

Now suppose that $h$ does not admit a 1-parameter stable unfolding.
If $\widetilde\tau(f)\cap\widetilde\tau(g)=\{0\}$ we have a
partition $\C^{p-s}\times\C^{s-2}\times\C^2$, where
$\widetilde\tau(g)=\C^{p-s}\times\{0\}\times\{0\}$ and
$\widetilde\tau(f)=\{0\}\times\C^{s-2}\times\{0\}$. Similarly as
above, we write
$$g(x_1,\ldots,x_n)=(x_1,\ldots,x_{p-s},g_{p-s+1}(x_{p-s+1},\ldots,x_n),\ldots,g_{p}(x_{p-s+1},\ldots,x_n))$$
and $f$ as
$$(f_1(x_1,\ldots,x_n),\ldots,f_{p-s}(x_1,\ldots,x_n),x_{n-s+1},\ldots,x_{n-2},f_{p-1}(x_1,\ldots,x_n),f_p(x_1,\ldots,x_n)).$$
Since $f\pitchfork g$ and $g$ is not transverse to
$\widetilde\tau(f)$, then $\{0\}\times\{0\}\times
\C\times\C\nsubseteq Im(dg_0)$. If $\{0\}\times\{0\}\times
\C\times\C\subseteq Im(df_0)$ we can take $f$ to the desired form.
If $Im(dg_0)=\C^{p-s}\times\{0\}\times\C\times\{0\}$ and
$Im(df_0)=\{0\}\times\C^{s-2}\times\{0\}\times\C$ then $f$ is not
transversal to the analytic stratum of $g$. It follows by Lemma
\ref{casonoopsu} that there is no such $h$ in codimension 2.

If $\dim\widetilde\tau(f)\cap\widetilde\tau(g)=k>0$, the only
difference with the above case is that the analytic stratum of $f$
overlaps the analytic stratum of $g$ in $k$ directions. So now
$$\begin{cases}
(x_1,\ldots,x_{p-s},g_{p-s+1}(x_{p-s+1},\ldots,x_n),\ldots,g_{p}(x_{p-s+1},\ldots,x_n))\\
(f_1(x_1,\ldots,x_n),\ldots,f_{p-s-k}(x_1,\ldots,x_n),x_{n-s-k+1},\ldots,x_{n-2},f_{p-1}(x_1,\ldots,x_n),f_p(x_1,\ldots,x_n)).
\end{cases}$$
We proceed analogously.

i) If $Im(dg_0)=\widetilde\tau(g)$ then $g$ is either a prism on a
Morse function or an immersion and so $\cod(\widetilde\tau(g))=1$.
If $h$ admits a 1-parameter stable unfolding, since $h$ is primitive
by Corollary \ref{aug}
$\cod\widetilde\tau(f)+\cod\widetilde\tau(g)>p$ and so
$\widetilde\tau(f)=\{0\}$. Now proceed as in the proof of
\cite[Proposition 5.16]{robertamond} with the only difference that
$f$ is a 1-parameter stable unfolding of an $f_0$ of codimension 2
(the procedure is similar to the one above).

If $h$ does not admit a 1-parameter stable unfolding, by Proposition
\ref{almreg2} we have that $\cod \widetilde\tau(f)+1-\cod
(\widetilde\tau(f)\cap\widetilde\tau(g))=2$ and so $\cod
(\widetilde\tau(f)\cap\widetilde\tau(g))=\cod \widetilde\tau(f)-1$,
which is impossible.

iii) First suppose that $h_i=\{f,g_i\}$ is stable for all $i$. Then
$\widetilde\tau(g_i)\pitchfork\widetilde\tau(f)$ and therefore
$g_i\pitchfork\widetilde\tau(f)$ and so
$g\pitchfork\widetilde\tau(f)$, which contradicts the hypothesis. So
there exists $i_0\in\{1,\ldots,r\}$ such that
$h_{i_0}=\{f,g_{i_0}\}$ has $\mathcal A_e$-codimension 1. In this
case, by Lemma \ref{cod2}, $g=\{g_{i_0},g_{i_1}\}$, where $g_{i_1}$
is a prism on a Morse function or an immersion, and so $r=2$.

Suppose that $g_{i_0}$ is not transverse to $\widetilde\tau(f)$. If
$g_{i_0}$ is not transverse to $f$, by Proposition \ref{robmond},
$p=1$, which is a contradiction. If $g_{i_0}\pitchfork f$, again by
Proposition \ref{robmond}, $h_{i_0}$ is a monic concatenation where
$g_{i_0}$ is a prism on a Morse function or an immersion.

Now suppose that $g_{i_0}\pitchfork\widetilde\tau(f)$. Since $g$ is
not transverse to $\widetilde\tau(f)$, then $g_{i_1}$ cannot be
transverse to $\widetilde\tau(f)$, therefore $h_{i_1}=\{f,g_{i_1}\}$
has codimension 1 and $g_{i_0}$ is a prism on a Morse function or an
immersion. Since $g$ is stable, $g_{i_0}\pitchfork g_{i_1}$ and
$\dim\widetilde\tau(g)=p-2$.

If $p=2$, $\cod\widetilde\tau(f)\leq 2$. If it is equal to 2 then
$f$ can be either an $A_2$-singularity or two prisms on Morse
functions. However, the second case does not occur since $h$ has
codimension 2. If it is equal to 1, then $f$ is a prism on a Morse
function or an immersion and $g\pitchfork f$ implies $g\pitchfork
\widetilde\tau(f)$, which contradicts the hypothesis.

If $p>2$ we take $g=\{g_{i_0},g_{i_1}\}$ to the form
\begin{equation}
\begin{cases}
(x_1,\ldots,x_{p-2},x_{p-1},\sum_{i=p}^{n}x_i^2)\\
(x_1,\ldots,x_{p-2},x_{p-1},\sum_{i=p}^{n}x_i^2+x_{p-1})
\end{cases}
\end{equation} and proceed as
in case ii).
\end{proof}

\begin{prop}\label{nadatrans}
If $g$ and $f$ are not transverse then $h$ is one of the following

\begin{itemize}
\item an augmentation and concatenation,
\item $f$ is a Morse function and $g$ is an $A_2$-singularity (only if $n\geq p=2$),
\item one of the following normal forms (when $p=n+1$ and $n$ is even):
\begin{equation}
n=2
\begin{cases}
(x,y^2,xy)\\
(x,x^2,y)
\end{cases}
,\,\,\,n=4
\begin{cases}
(u_1,v_1,v_2,y^3+u_1y,v_1y+v_2y^2)\\
(u_1,v_1,v_2,u_1^2+v_2,y)
\end{cases}
,
\end{equation}
\begin{equation}
n=2k-2, k\geq 4
\begin{cases}
(u_1,\ldots,u_{k-2},v_1,\ldots,v_{k-1},y^k+\sum_{i=1}^{k-2}u_iy^i,\sum_{i=1}^{k-1}v_iy^i)\\
(u_1,\ldots,u_{k-2},v_1,\ldots,v_{k-2},u_{k-3}+u_{k-2}^2,v_{k-1},y)
\end{cases}
\end{equation}
\end{itemize}
\end{prop}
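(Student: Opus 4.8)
The statement to prove is Proposition \ref{nadatrans}, which classifies primitive $\mathcal A_e$-codimension 2 bigerms $h=\{f,g\}$ (with $f,g$ stable, $p>1$) in which $g$ and $f$ are \emph{not} transverse. The natural strategy is to run the same kind of case analysis that produced Propositions \ref{nottrans} and \ref{robmond}, but now for the ``doubly non-transverse'' situation, splitting according to the dimensions of the analytic strata $\widetilde\tau(f)$, $\widetilde\tau(g)$ and their intersection, and invoking Lemma \ref{casonoopsu} to kill the bad cases when $p>2$.

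First I would reduce to the equidimensional and the $p=n+1$ pictures separately, since the known classifications quoted at the start of \S5 settle everything in small dimensions: when $p=1$ use Remark \ref{3morse} and the stated codimension 2 list, and when $p=2$ use the lists of Kolgushkin and of Ohmoto; this is where the exceptional item ``$f$ a Morse function, $g$ an $A_2$'' (only $n\ge p=2$) comes from. For $p>2$ the key dichotomy is whether $h$ admits a 1-parameter stable unfolding. If it does, by Lemma \ref{decomp} we get a direct sum decomposition $T_0\C^p=\widetilde\tau(f)\oplus\widetilde\tau(g)\oplus\C\{v\}$; if it does not, we get $T_0\C^p=(\widetilde\tau(f)+\widetilde\tau(g))\oplus\C\{v_1,v_2\}$. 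In either case ``$f$ and $g$ not transverse'' forces a tangency between the images $\operatorname{Im}(df_0)$ and $\operatorname{Im}(dg_0)$ beyond the tangency of the analytic strata, and Lemma \ref{casonoopsu} immediately excludes the configuration where $g$ is not transverse to $\widetilde\tau(f)$ \emph{and} $f$ is not transverse to $\widetilde\tau(g)$; so at least one of the two mixed transversalities holds. Arguing as in the proof of Proposition \ref{nottrans}~ii)--iii), one chooses coordinates in the source so that $g$ is a suspension of some $g_0$ and $f$ is an $s$-parameter stable unfolding of some $f_0$ with the analytic strata in standard position; the non-transversality of $f$ and $g$ together with $\mathcal A_e\text{-}\cod(h)=2$ then says precisely that $h$ is an augmentation and concatenation (the augmenting data recording the tangency direction and the unfolding parameter absorbed into the concatenation). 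This disposes of the first bullet for all $p>2$.

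The remaining work is the genuinely exceptional family in the last bullet, occurring when $p=n+1$ and $n$ is even. Here there are no stable monogerms with codimension 2 analytic stratum and no primitive codimension 1 monogerms in even source dimension, so by Lemma \ref{liftprim} and Remark \ref{primmultmorse} a crude dimension count forces $\mathcal A_e\text{-}\cod(h)\ge p$ unless the configuration is very special; one then has to pin down exactly which $\{f,g\}$ survive at codimension $2$. The branch $f$ must be (up to suspension) one of the primitive codimension 0 or codimension 1 germs $(\C^{n},0)\to(\C^{n+1},0)$ in even source dimension --- for $n=2$ the cross-cap, for $n=2k-2$ the stable germ $(\underline u,\underline v,y^k+\sum u_iy^i,\sum v_iy^i)$ --- and $g$ must be an immersion tangent to $f$ along the appropriate curve; writing down $Lift(f)$ from the formulas in Lemma \ref{liftprim}, computing $\theta(g)/(tg(\theta_n)+wg(Lift(f)))$ via the same exact-sequence bookkeeping used in Theorem \ref{augconc} and Proposition \ref{codprimmorse}, and checking the dimension is exactly $2$ for the listed normal forms and strictly larger otherwise, identifies this list. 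I expect the main obstacle to be precisely this last step: controlling the codimension computation for the $n=2k-2$ family uniformly in $k$, i.e.\ showing that the only codimension 2 member is the stated one and that every other tangency (or more degenerate contact) of the immersion with $f$ pushes the codimension above $2$; this is the analogue, for $p=n+1$, of the delicate estimate carried out in Proposition \ref{a2an} and Remark \ref{primmultmorse}, and it is where the parity condition ``$n$ even'' is actually used.
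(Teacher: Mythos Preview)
Your plan has a genuine gap at its pivot. You write that ``Lemma \ref{casonoopsu} immediately excludes the configuration where $g$ is not transverse to $\widetilde\tau(f)$ and $f$ is not transverse to $\widetilde\tau(g)$; so at least one of the two mixed transversalities holds.'' But Lemma \ref{casonoopsu} has $f\pitchfork g$ as a standing hypothesis, and here you are in the \emph{opposite} situation: $f$ and $g$ are not transverse. Worse, since $\widetilde\tau(g)\subseteq \operatorname{Im}(dg_0)$ and $\widetilde\tau(f)\subseteq \operatorname{Im}(df_0)$, non-transversality of $f$ and $g$ forces \emph{both} mixed non-transversalities simultaneously. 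So the conclusion you draw is the negation of what is actually true, and the subsequent appeal to the coordinate reductions of Proposition \ref{nottrans} (which also lives under the hypothesis $f\pitchfork g$) is unfounded.

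The paper's argument runs along a different axis. After observing that both mixed non-transversalities hold, it splits on whether $f$ (or $g$) is a genuine multigerm. If $f=\{f_1,\ldots,f_r\}$ with $r>1$, some branch $f_{i_0}$ is non-transverse to $g$; then $\{f_{i_0},g\}$ has $\mathcal A_e$-codimension $1$ and, by Proposition \ref{robmond}, cannot be primitive (that would force $p=1$), so it is an augmentation and $h$ is an augmentation and concatenation. If both $f$ and $g$ are monogerms, one shows $\widetilde\tau(f)=\{0\}\Leftrightarrow \cod\widetilde\tau(g)=1$; in that case $g$ is Morse/immersion, and for $n\ge p$ the exact-sequence estimate with $Lift(A_n)$ gives $\mathcal A_e\text{-}\cod(h)\ge n>2$, while for $p=n+1$ one lands on the listed even-$n$ normal forms (quoting \cite{houstonroberta}, \cite{roberta} rather than redoing the $Lift$ computation uniformly in $k$). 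Finally, when $1<\cod\widetilde\tau(f)<p$ and $1<\cod\widetilde\tau(g)<p$, the paper does not invoke Lemma \ref{casonoopsu} but \emph{reruns its deformation argument}: deform $g$ keeping $\widetilde\tau(g)$ fixed so that $g_u\pitchfork\widetilde\tau(f)$, then deform $f$ similarly, and use simplicity of codimension~1 germs to conclude $h$ is non-simple or has $\mathcal A_e$-codimension $>2$. You should reorganise around this multigerm/monogerm split and use the deformation argument itself (not the lemma as a black box) in the intermediate-stratum case.
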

\begin{proof}
If $f$ is not transverse to $g$, then $f$ is not transverse to
$\widetilde\tau(g)$ and $g$ is not transverse to
$\widetilde\tau(f)$. If $p=2$ and $h$ admits a 1-parameter stable
unfolding, since $h$ is primitive, by Corollary \ref{aug}
$\cod\widetilde\tau(f)+\cod\widetilde\tau(g)>2$ and since the
analytic stratum must have codimension less than or equal to $p=2$,
$\cod\widetilde\tau(g)=1$ and $\cod\widetilde\tau(f)=2$ (or
viceversa), so $h$ is a non-transversal bigerm of a prism on a Morse
function and an $A_2$-singularity or a trigerm of three prisms on
Morse functions where two of them are non-transversal and the third
is transversal to the other two (the latter is an augmentation and
concatenation). If $h$ does not admit a 1-parameter stable
unfolding, then by Proposition \ref{almreg2}
$\cod\widetilde\tau(f)=\cod\widetilde\tau(g)=2$ and, by the known
classifications mentioned at the beginning of this section, there
are no possibilities.

If $p>2$, let $f=\{f_1,\ldots,f_r\}$, $r>1$, as $f$ is not
transverse to $g$, there exists $f_{i_0}$ which is not transverse to
$g$, therefore $\mathcal A_e-\cod(\{f_{i_0},g\})=1$. By Proposition
\ref{cod2}, $f=\{f_{i_0},f_{i_1}\}$ with $f_{i_1}$ a prism on a
Morse function or an immersion and $f_{i_0}\pitchfork f_{i_1}$. We
have that $g$ is not transverse to $\widetilde\tau(f_{i_0})$ and $g$
is not transverse to $f_{i_0}$. If $\{f_{i_0},g\}$ is primitive,
then by Proposition \ref{robmond} $p=1$ and we have a contradiction,
therefore $\{f_{i_0},g\}$ is an augmentation and $h$ is an
augmentation and concatenation which is not an augmentation.

Now suppose that $f$ and $g$ are monogerms (if $g$ is not a
monogerm, we change it for $f$ and proceed as above). We have that
$\widetilde\tau(f)=\{0\}$ if and only if $\cod\widetilde\tau(g)=1$,
and the same changing $f$ for $g$. In fact, suppose
$\widetilde\tau(f)=\{0\}$ ($\cod\widetilde\tau(f)=p$), then by
Proposition \ref{almreg2}, $\cod\widetilde\tau(g)\leq 2$. We have
that $\cod\widetilde\tau(g)\neq 0$ because $g$ is not a submersive
branch. In the case $p=n+1$, from the known classifications, there
is no stable monogerm with $\cod\widetilde\tau(g)=2$ and if $n\geq
p$, by Theorems \ref{codn} and \ref{codndoublefold}, if
$\cod\widetilde\tau(g)=2$, $\mathcal A_e-\cod(h)\geq n\geq p>2$, so
$\cod\widetilde\tau(g)=1$. On the other hand, suppose that
$\cod\widetilde\tau(g)=1$. If $h$ admits a 1-parameter stable
unfolding, by Corollary \ref{aug} $\widetilde\tau(f)=\{0\}$. If $h$
does not admit a 1-parameter stable unfolding, by Proposition
\ref{almreg2} we have that $\cod
(\widetilde\tau(f)\cap\widetilde\tau(g))=\cod \widetilde\tau(f)-1$,
which is impossible.

Suppose $\widetilde\tau(f)=\{0\}$ and $Im(dg_0)=\widetilde\tau(g)$.
In the case $n\geq p$, $f$ is an $A_n$-singularity. Using the exact
sequence in the proofs of Theorems \ref{cuspconc} and
\ref{doubleconc} and the information about $Lift(A_n)$ in
Proposition \ref{a2an} we can see that the codimension in this case
is greater than or equal to $n$. In the case $p=n+1$, from
\cite{houstonroberta} and \cite{roberta} we have that the only
possibilities are when $n$ is even. The bigerm has a normal form
$\{(x,y^2,xy),(x,x^2,y)\}$ for the case $n=2$. For other $n$, the
normal forms can be found in \cite{houstonroberta}. If
$f:(\C^{2k-2},0)\rightarrow (\C^{2k-1},0)$, then
$f(u_1,\ldots,u_{k-2},v_1,\ldots,v_{k-1},y)=(u_1,\ldots,u_{k-2},v_1,\ldots,v_{k-1},y^k+\sum_{i=1}^{k-2}u_iy^i,\sum_{i=1}^{k-1}v_iy^i)$.
For $k=3$, $g(u_1,v_1,v_2,y)=(u_1,v_1,v_2,u_1^2+v_2,y)$. For $k\geq
4$ the normal form for the bigerm is:

\begin{equation}
\begin{cases}
(u_1,\ldots,u_{k-2},v_1,\ldots,v_{k-1},y^k+\sum_{i=1}^{k-2}u_iy^i,\sum_{i=1}^{k-1}v_iy^i)\\
(u_1,\ldots,u_{k-2},v_1,\ldots,v_{k-2},u_{k-3}+u_{k-2}^2,v_{k-1},y)
\end{cases}
\end{equation}

Now suppose that $1<\cod\widetilde\tau(f)<p$ and
$1<\cod\widetilde\tau(g)<p$. In this case we proceed exactly as in
the proof of Lemma \ref{casonoopsu} and obtain that either $h$ is
non-simple or $\mathcal A_e-\cod(h)>2$.
\end{proof}

\begin{prop}\label{trans}
If $f\pitchfork\widetilde\tau(g)$ and $g\pitchfork\widetilde\tau(f)$
then $h=\{f,g\}$ is a non-monic generalised concatenation.
\end{prop}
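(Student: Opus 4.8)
The plan is to show that the transversality hypotheses $f\pitchfork\widetilde\tau(g)$ and $g\pitchfork\widetilde\tau(f)$ force exactly the structure appearing in the definition of a generalised concatenation, and that the concatenation produced is non-monic. First I would use Proposition~\ref{almreg2}: if $h$ admits a $1$-parameter stable unfolding then $\widetilde\tau(f)$ and $\widetilde\tau(g)$ have almost regular intersection, so (as in Lemma~\ref{decomp}) there is a decomposition $T_0\C^p=(\widetilde\tau(f)+\widetilde\tau(g))\oplus\C\{v\}$ with $\dim(\widetilde\tau(f)\cap\widetilde\tau(g))=0$. Setting $\dim\widetilde\tau(g)=p-s$ (so $1\le p-s$, and since $h$ is primitive and hence not an augmentation, Corollary~\ref{aug} gives $\cod\widetilde\tau(f)+\cod\widetilde\tau(g)>p$, forcing $s>1$ once one checks $g$ is not a prism on a Morse function or immersion — if it were, then $g\pitchfork\widetilde\tau(f)$ together with $\mathrm{Im}(dg_0)=\widetilde\tau(g)$ would already make $\{f,g\}$ reduce to the monic case handled in Proposition~\ref{robmond} and Proposition~\ref{nottrans}(i), contradicting primitivity in codimension~$2$). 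Choose source coordinates so that $\widetilde\tau(g)=\C^{p-s}\times\{0\}\times\{0\}$, $\widetilde\tau(f)=\{0\}\times\C^{s-1}\times\{0\}$, and $\C\{v\}$ is the last target direction.

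Next I would put $g$ and $f$ into the normal forms required by the definition of generalised concatenation. Since $g\pitchfork\widetilde\tau(f)$ and $\mathrm{Im}(dg_0)\supseteq\widetilde\tau(g)$, a change of coordinates in the source lets us write
$$g(x_1,\ldots,x_n)=(x_1,\ldots,x_{p-s},g_{p-s+1}(x),\ldots,g_p(x))$$
with the last $s$ components depending only on the variables complementary to those parametrising $\widetilde\tau(g)$; this is precisely $g=\mathrm{Id}_{\C^{p-s}}\times\overline g$ with $\overline g$ stable (stability of $\overline g$ follows from stability of $g$, since the $\mathrm{Id}$ factor is a prism). For $f$: because $f\pitchfork\widetilde\tau(g)$, the direction $\C\{v\}$ lies in $\mathrm{Im}(df_0)$, so after a coordinate change $f$ has the form
$$f(x_1,\ldots,x_n)=(f_1(x),\ldots,f_{p-s}(x),x_{n-s+1},\ldots,x_n),$$
i.e.\ $f$ is an $s$-parameter unfolding of a germ $f_0:(\C^{n-s},S)\to(\C^{p-s},0)$ with $F_i(x,0)=f_{0,i}(x)$. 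Stability of $f$ makes this an $s$-parameter \emph{stable} unfolding, and $n-p+s$ equals the source dimension of $\overline g$, matching the definition of generalised concatenation. The concatenation is non-monic because $s>1$ (monic concatenations have $s=1$), or, if one prefers the other characterisation, because $\dim\widetilde\tau(g)=p-s<p-1$ so $\overline g$ is not a single fold/nullmap.

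For the case where $h$ does \emph{not} admit a $1$-parameter stable unfolding, Proposition~\ref{almreg2} gives almost regular intersection of order~$2$, and I would run the same argument with the decomposition $T_0\C^p=(\widetilde\tau(f)+\widetilde\tau(g))\oplus\C\{v_1,v_2\}$ from Lemma~\ref{decomp}: now $f$ becomes an $s$-parameter unfolding with the last two ``free'' target directions replaced by the genuinely deformed components $f_{p-1},f_p$, exactly as in the corresponding sub-case of the proof of Proposition~\ref{nottrans}(ii), and when $\dim(\widetilde\tau(f)\cap\widetilde\tau(g))=k>0$ the analytic strata overlap in $k$ source directions and one adjusts the splitting of the source coordinates accordingly. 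The main obstacle I anticipate is the bookkeeping needed to guarantee that the coordinate changes realising these two normal forms can be performed \emph{simultaneously} on $f$ and $g$ without destroying $F_i(x,0)=f_{0,i}(x)$ — i.e.\ that the transversality of the two analytic strata to each other's branch is genuinely enough to decouple the source variables into the three (or four) blocks $\C^{p-s}$, $\C^{s-1}$ (or $\C^{s-2}\times\C^k$), $\C^{n-p+s}$ — and verifying in passing that the residual conditions $F\pitchfork\widetilde\tau(g)$ and stability of $\overline g$ demanded by the definition are automatic; the rest is a direct translation into the definition of generalised concatenation.
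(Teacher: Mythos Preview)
Your proposal is correct and follows essentially the same route as the paper: use Lemma~\ref{decomp} to split $T_0\C^p$ into $\widetilde\tau(g)\oplus\widetilde\tau(f)\oplus\C\{v\}$ (or the order-$2$ variant when no $1$-parameter stable unfolding exists), then use the transversality hypotheses to introduce source coordinates putting $f$ into $s$-parameter-unfolding form and $g$ into $\mathrm{Id}_{\C^{p-s}}\times\overline g$ form. The paper's argument is terser and uses the two transversalities more symmetrically---taking $z_p\circ f$ and $z_p\circ g$ as a common unfolding parameter $u$ to reach the binary-concatenation-style display $(x,Y,u)\mapsto(f_{u,Y}(x),Y,u)$, $(X,y,u)\mapsto(X,g_u(y),u)$ directly---but your extra verification that $s>1$ (hence non-monic) and your remark about the simultaneous-coordinate-change bookkeeping are reasonable elaborations of the same idea.
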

\begin{proof}
The fact that $f\pitchfork\widetilde\tau(g)$ and
$g\pitchfork\widetilde\tau(f)$ implies that
$\widetilde\tau(g)\neq\{0\}\neq\widetilde\tau(f)$ so, if $h$ admits
a 1-parameter stable unfolding, again we have a decomposition of
$\C^p$ as $\C^{p-s}\times\C^{s-1}\times\C$ where $s>1$,
$\widetilde\tau(g)=\C^{p-s}\times\{0\}\times\{0\}$ and
$\widetilde\tau(f)=\{0\}\times\C^{s-1}\times\{0\}$. Let
$z_1,\ldots,z_p$ be the coordinates of $\C^p$. Since
$f\pitchfork\widetilde\tau(g)$, we can take $z_p\circ f$ as a
coordinate, $u$, on the domain of $f$ and since
$g\pitchfork\widetilde\tau(f)$, we can take $z_p\circ g$ as a
coordinate $u$ on the domain of $g$. A coordinate change now takes
$h=\{f,g\}$ to the form

\begin{equation}
\begin{cases}
(x,Y,u)\mapsto (f_{u,Y}(x),Y,u)\\
(X,y,u)\mapsto (X,g_u(y),u)
\end{cases}
\end{equation}
which is clearly a generalised concatenation.

If $h$ does not admit a 1-parameter stable unfolding $h$ can be
taken to the form
\begin{equation}
\begin{cases}
(x,Y,u)\mapsto (f_{u,v,Y}(x),Y,u,v)\\
(X,y,u)\mapsto (X,g_{u,v}(y),u,v)
\end{cases}
\end{equation}
which is a generalised concatenation too.
\end{proof}

In summary we have:

\begin{teo}\label{teofinal}
Let $h=\{f,g\}$ be of $\mathcal A_e$-codimension 2, then
\begin{enumerate}
\item[1)] if $f$ is a monogerm of $\mathcal A_e$-codimension 1, then
$g$ a prism on a Morse function or an immersion and
\begin{enumerate}
\item[i)] $h$ is an augmentation if and only if $f$ is an augmentation with $m_0(f)\leq p$ when $n\geq p$ ($m_0(f)\leq [\frac{n}{2}]$ when $p=n+1$),

\item[ii)] $h$ is an augmentation and concatenation if $f$ is an augmentation with $m_0(f)=p+1$ when $n\geq p$ ($m_0(f)=[\frac{n}{2}]+1$ when $p=n+1$),

\item[iii)] if $p=1,2$ and $m_0(f)=p+2$ when $n\geq p$ ($m_0(f)=[\frac{n}{2}]+2$ when $p=n+1$) then $f$ is a primitive monogerm of codimension 1,

\item[iv)] if $(n,p)=(3,4)$ and $m_0(f)=3$ then $h$ has the normal form in Remark \ref{ejemcati} 4),
\end{enumerate}
\item[2)] if $f$ and $g$ are stable, then
\begin{enumerate}
\item[i)] $\cod(\widetilde\tau(f))+\cod(\widetilde\tau(g))\leq p$ if and only if $h$ is an augmentation,

\item[ii)] if $h$ is primitive and $g$ is not transverse to $\widetilde\tau(f)$, then
\begin{enumerate}
\item[a)] if $f\pitchfork g$, then
\begin{enumerate}
\item[a1)] Suppose $g$ is a monogerm. When $Im(dg_0)=\widetilde\tau(g)$, $h$ is a monic concatenation. When $Im(dg_0)\supsetneqq\widetilde\tau(g)$, then either $h$ is a (non-monic) generalised concatenation with $g$, it is a bigerm with two $A_2$-singularities or it is a trigerm of an $A_2$-singularity with two prisms on Morse functions (only if $n\geq p=2$).

\item[a2)] Suppose $g$ is a multigerm, then it is a bigerm and either $h$ is a double fold (immersion) concatenation with $g$ or it is a trigerm of an $A_2$-singularity with two prisms on Morse functions (only if $n\geq p=2$).
\end{enumerate}
\item[b)] If $g$ and $f$ are not transverse then $f$ is a Morse function and $g$ is an $A_2$-singularity (only if $n\geq p=2$), $h$ is an augmentation and concatenation or it has one of the normal forms in Proposition \ref{nadatrans} (when $p=n+1$ and $n$ even).
\end{enumerate}
\item[iii)] if $h$ is primitive, $g\pitchfork \widetilde\tau(f)$ and $f\pitchfork \widetilde\tau(g)$, then $h$ is a non-monic generalised concatenation.
\end{enumerate}
\end{enumerate}
\end{teo}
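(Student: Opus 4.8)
The plan is to observe that Theorem \ref{teofinal} is not a new result but a \emph{bookkeeping} theorem: it simply collects, into a single case-tree, the conclusions that have already been established in Propositions \ref{cod2}--\ref{trans} and their corollaries. So the strategy is to verify that the case-tree in the statement is exhaustive and that each leaf is covered by an earlier result, citing the correct one. First I would split on whether $f$ is a monogerm of $\mathcal A_e$-codimension $1$ or whether both $f$ and $g$ are stable; by Proposition \ref{cod2} and its corollary these are the only two possibilities once $|S|>1$ (when $r\geq 3$ all branches are stable, and when $r=2$ a non-stable branch forces the other to be a codimension $1$ monogerm with the complementary branch stable). Part 1) of the theorem is then a transcription of Corollary \ref{corocod1} together with Proposition \ref{aug2}(ii) for the dichotomy on $m_0(f)$, Remark \ref{ejemcati} 4) for the exceptional $(n,p)=(3,4)$ normal form, and the remark following Proposition \ref{aug2} for the case $m_0(f)=p+1$.

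For part 2), I would first dispose of the augmentation case: by Corollary \ref{aug}, $h$ is an augmentation precisely when $\cod\widetilde\tau(f)+\cod\widetilde\tau(g)\leq p$, which is exactly 2)(i). It then remains to handle primitive $h$, and here I would organise the argument by the transversality behaviour of $f$, $g$ and the analytic strata $\widetilde\tau(f)$, $\widetilde\tau(g)$ --- there are, up to the symmetry $f\leftrightarrow g$, exactly three mutually exclusive situations: (a) $g$ not transverse to $\widetilde\tau(f)$ but $f\pitchfork g$; (b) $f$ and $g$ not transverse to each other; (c) $f\pitchfork\widetilde\tau(g)$ and $g\pitchfork\widetilde\tau(f)$. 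Lemma \ref{casonoopsu} is what guarantees that the remaining logical possibility (both $f$ and $g$ transverse to each other but neither transverse to the other's stratum) does not occur when $p>2$, and the low-dimensional exceptions $n\geq p=2$ are exactly the ones surviving that lemma. Situation (a) is Proposition \ref{nottrans}, situation (b) is Proposition \ref{nadatrans}, and situation (c) is Proposition \ref{trans}; transcribing their conclusions into the sub-items a1), a2), b), iii) completes the proof.

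The only genuine content to check, beyond citation, is \emph{exhaustiveness}: that every primitive stable-branch bigerm of codimension $2$ falls into (a), (b) or (c) (after possibly swapping $f$ and $g$), and that the $p\le 2$ cases excluded by Lemma \ref{casonoopsu} really are accounted for in the statement. I would make this explicit by a short trichotomy argument: if $f\pitchfork g$ and $g\pitchfork\widetilde\tau(f)$ then (using $Im(df_0)\supseteq\widetilde\tau(f)$ when $f$ is a prism, and the codimension estimates of Theorems \ref{codn}, \ref{codndoublefold}, \ref{cuspconc}, \ref{doubleconc} otherwise) one deduces $f\pitchfork\widetilde\tau(g)$ as well, landing in (c); the remaining cases are (a) and (b) by definition, and the symmetric cases are handled by the same results with $f$ and $g$ interchanged since all the cited statements are symmetric in the two branches. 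The main obstacle, such as it is, is purely organisational: ensuring the cross-references land on the right sub-case and that the $(n,p)=(3,4)$ and $n\geq p=2$ sporadic families are placed in the correct leaf; no new estimate or construction is needed.
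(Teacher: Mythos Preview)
Your proposal is correct and matches the paper's approach: the paper presents Theorem \ref{teofinal} explicitly as a summary (``In summary we have:'') with no separate proof, so the content is exactly the collation of Propositions \ref{cod2}--\ref{trans}, Corollaries \ref{aug} and \ref{corocod1}, and the associated remarks that you cite. One small simplification: your exhaustiveness argument for the primitive stable-branch case is more elaborate than needed --- the trichotomy follows directly by first asking whether $f\pitchfork g$ (if not, case (b)), and if so, whether both $f\pitchfork\widetilde\tau(g)$ and $g\pitchfork\widetilde\tau(f)$ hold (case (c)) or at least one fails (case (a) after possibly swapping $f$ and $g$); there is no need to invoke Theorems \ref{codn}--\ref{doubleconc} to \emph{deduce} transversality, since the failure of one transversality simply places you in case (a) by symmetry.
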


\begin{rem}
If we replace $\C$ by $\R$ and analytic maps by smooth ones, all the
results in this section hold. However, in the real case, the
operations may lead to different $\mathcal A$-classes.
\end{rem}

\section{$\mathcal A_e$-codimension 2 multigerms from $\mathbb C^3$ to $\mathbb C^3$}

In this section we use the results in Section 5 in order to recover
the classification of multigerms of $\mathcal A_e$-codimension 2
from $\C^3$ to $\C^3$ obtained in \cite{mio}. First, using quadratic
and cubic augmentations ($A^2$ and $A^3$), monic concatenations
($MC$) and concatenations and augmentations (AC), we obtain all
codimension 1 and 2 germs and multigerms from $\C^2$ to $\C^2$
starting from a monogerm and the special bigerm from Proposition
\ref{robmond}, namely two Morse functions. This is shown in figure
\ref{de2en2}, where the special multigerms mentioned in Propositions
\ref{nottrans} and \ref{nadatrans} are included too.

\begin{figure}[!htb]
\centering
\includegraphics[width=0.8\linewidth]{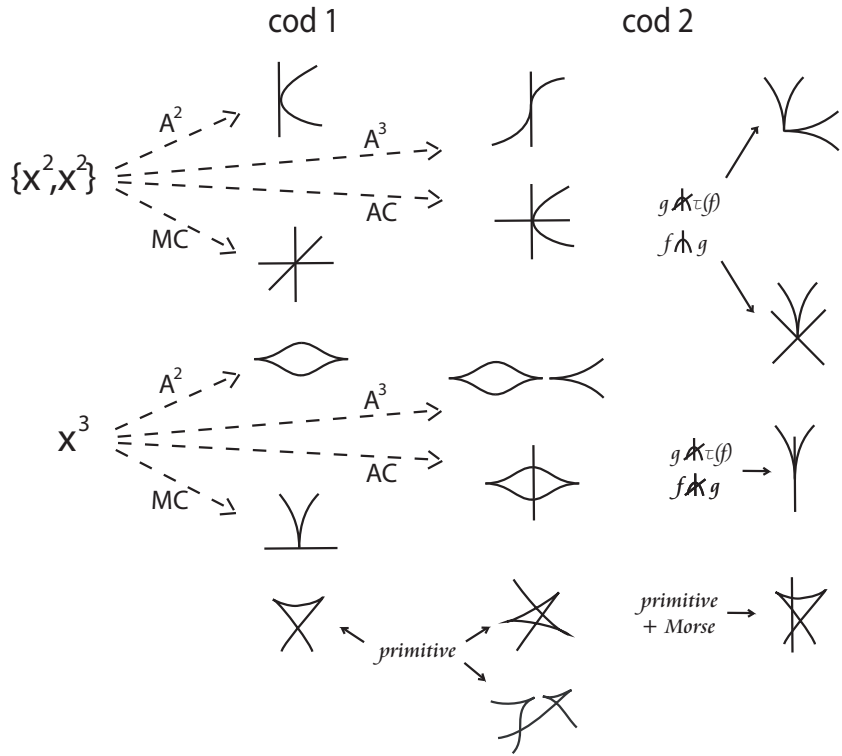}
\caption{Codimension 1 and 2 germs and multigerms of maps from
$\C^2$ to $\C^2$. The cases where a codimension 1 germ appears, a
stabilisation is represented.} \label{de2en2}
\end{figure}

The following table, obtained by W. L. Marar $\&$ F. Tari in
\cite{marartari} and earlier by V. Goryunov in \cite{Goryunov},
contains a list of normal forms for simple corank 1 monogerms of
maps from $\R^3$ to $\R^3$.

\hspace{1cm}\begin{tabular}{| c | c | c |} \hline Name & Normal form
& $\mathcal{A}_e$-codimension \\ \hline
$A_1$ & $(x,y,z^2)$ & 0 \\ \hline $3_{\mu(P)}$ & $(x,y,z^3+P(x,y)z)$ & $\mu(P)$ \\
\hline $4_1^k$ & $(x,y,z^4+xz \pm y^k z^2),k\geq1$ & $k-1$ \\
\hline $4_2^k$ &
$(x,y,z^4+(y^2 \pm x^k)z+x z^2),k\geq2$ & $k$ \\
\hline $5_1$ & $(x,y,z^5+xz+ y z^2)$ & $1$ \\ \hline $5_2$ &
$(x,y,z^5+xz+ y^2 z^2+y z^3)$ & $2$ \\
\hline
\end{tabular} \\ \\

Here $P(x,y)$ are polynomials in two variables and $\mu(P)$ denotes
the Milnor number of $P$. We add to this list the unimodular
monogerm $6_1:(x,y,z^6+yz^2+xz)$ (see \cite{Goryunov}) of $\mathcal
A_e$-codimension three.

Next we introduce the notation for germs and multigerms used in
\cite{mio}: starting from stable germs, $A_1$ (fold), $A_2$ (cusps)
and $A_3$ (swallowtails), $A_i^kA_j$ represents a multigerm with $k$
branches of type $A_i$ and a branch of type $A_j$ where  the
branches are pairwise transversal. Tangencies are indicated by $T$,
for instance, we represent by $T_{ij}$ a nondegenerate tangency
between the strata of singularities $A_i$ and $A_j$ in the branch
set, or by $T_{A_iA_j^k}$ a nondegenerate tangency between the
strata of points $A_i$ and $A_j^k$ in the discriminant, etc.
Degenerate tangencies are denoted by $DT$ . Therefore, $A_13_1$
represents a fold with a germ of type $3_1$ in the ``best" possible
position (lower contact order); $A_1T_{A_1A_1^2}$ is a quadrigerm
determined by a fold ($A_1$) with the trigerm $T_{A_1A_1^2}$, which
in turn is given by a nondegenerate tangency of a fold surface and a
double fold curve; $DT_{11}$ means a degenerate tangency between two
fold surfaces and $DT_{A_1A_1^2}$ means a degenerate tangency
between a fold surface and a double fold curve. A superindex on the
character $T$, i.e. $T^1$, denotes a special type of tangency, for
example $T^1_{22}$ means that the tangent vector to one of the
cuspidal edges is included in the tangent plane in the limit of the
other cuspidal edge; $T^1_{13}$ means that the tangent vector in the
limit of the cuspidal edges at the swallowtail point is included in
the tangent plane to the fold surface; and $T^1_{A_1^2A_2}$ means
that the tangent vector to the double point curve is included in the
tangent plane in the limit of the cuspidal edge.

\begin{figure}[!htb]
\centering
\includegraphics[width=0.99\linewidth]{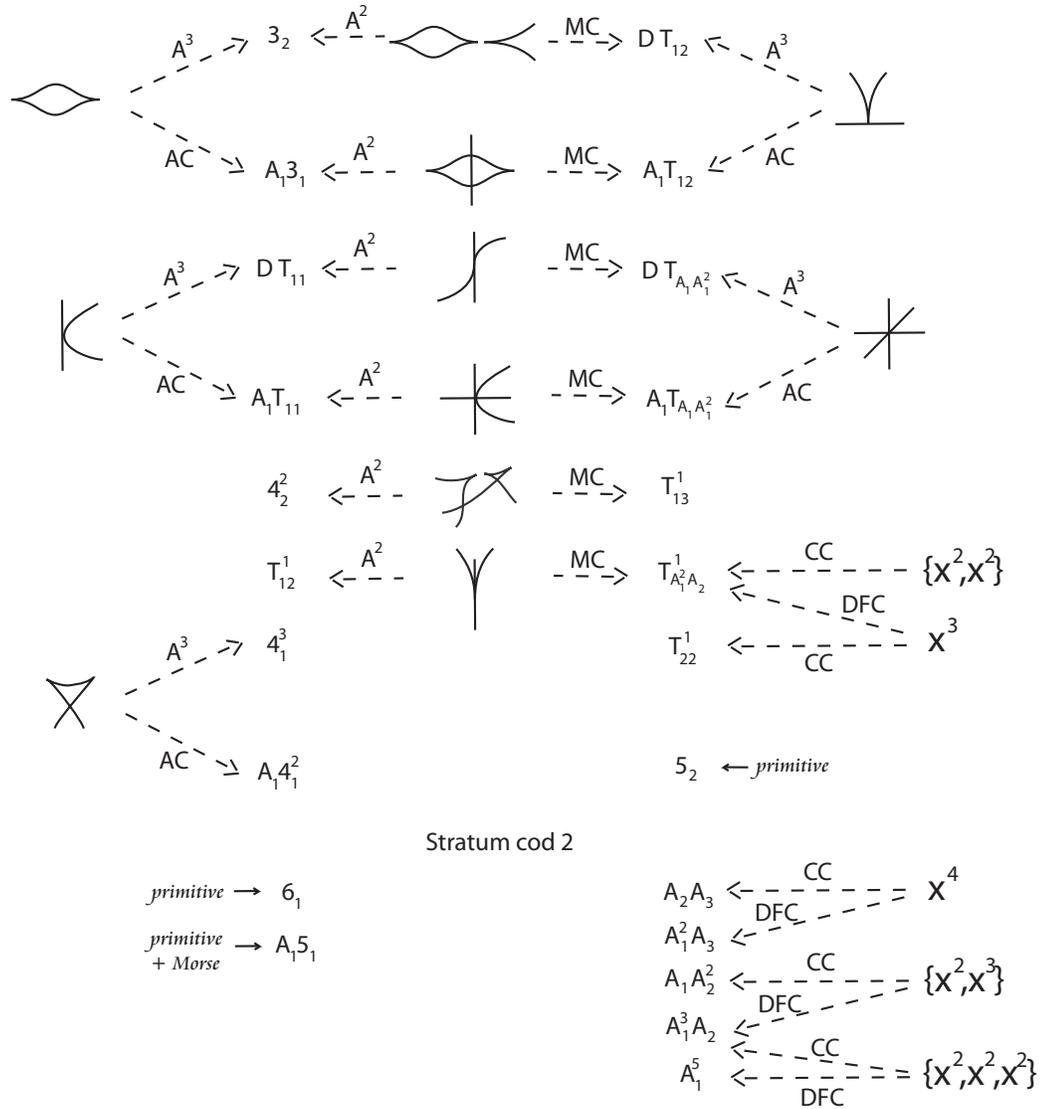}
\caption{Codimension 2 germs and multigerms of maps from $\C^3$ to
$\C^3$. $CC$ and $DFC$ stand for cuspidal concatenation and double
fold concatenation respectively.} \label{de3en3}
\end{figure}

Figure \ref{de3en3} shows how to obtain all the codimension 2
multigerms from $\C^3$ to $\C^3$ using the operations defined
starting from monogerms and the special multigerms.

 \noindent
R. Oset Sinha (raul.oset@uv.es), M. A. S. Ruas (maasruas@icmc.usp.br), R. Wik Atique (rwik@icmc.usp.br)\\
Instituto de Ci\^encias Matem\'aticas e de Computa\c{c}\~ao - USP,\\
Av. Trabalhador s\~ao-carlense, 400 - Centro,
CEP: 13566-590 - S\~ao Carlos - SP, Brazil.\\

\end{document}